\DeclareMathOperator{\dive}{div}
\DeclareMathOperator{\e}{\varepsilon}
\DeclareMathOperator{\loc}{loc}
\DeclareMathOperator{\bR}{\mathbb R}
\newtheorem{thm}{Theorem}[section]
\newtheorem{lemma}[thm]{Lemma}
\newtheorem{prop}[thm]{Proposition}
\theoremstyle{definition}
\newtheorem*{acknowledgments*}{Acknowledgments}
\theoremstyle{definition}
\newtheorem{remark}[thm]{Remark}
\renewcommand\footnotemark{}
\newcommand{\@endstuff}{\par\vspace{\baselineskip}\noindent\small
\begin{tabular}{@{}l}\scshape{Department of Mathematics, Universitat Aut{\`o}noma de Barcelona,}\\\scshape{08193 Bellaterra (Barcelona), Spain}\\ \\\textit{E-mail address}: \texttt{gsakellaris@mat.uab.cat} \end{tabular}}
\numberwithin{equation}{section}
\begin{document}
\title{Scale invariant regularity estimates for second order elliptic equations with lower order coefficients in optimal spaces}

\author{Georgios Sakellaris \thanks{\hspace*{-7pt}2010 \textit{Mathematics Subject Classification}. Primary 35B45, 35B50, 35B51, 35B65, 35J15. Secondary 35D30, 35J10, 35J20, 35J86. \newline
\hspace*{10.5pt} \textit{Key words and phrases}. Maximum principle; pointwise bounds; Moser estimate; Harnack inequality; continuity of solutions; Lorentz spaces; decreasing rearrangements; symmetrization. \newline
\hspace*{14pt}The author has received funding from the European Union's Horizon 2020 research and innovation programme under Marie Sk{\l}odowska-Curie grant agreement No 665919, and is partially supported by MTM-2016-77635-P (MICINN, Spain) and 2017 SGR 395 (Generalitat de Catalunya).}}

\date{\vspace{-7ex}}
\maketitle

\begin{abstract}
We show local and global scale invariant regularity estimates for subsolutions and supersolutions to the equation $-\dive(A\nabla u+bu)+c\nabla u+du=-\dive f+g$, assuming that $A$ is elliptic and bounded. In the setting of Lorentz spaces, under the assumptions $b,f\in L^{n,1}$, $d,g\in L^{\frac{n}{2},1}$ and $c\in L^{n,q}$ for $q\leq\infty$, we show that, with the surprising exception of the reverse Moser estimate, scale invariant estimates with ``good" constants (that is, depending only on the norms of the coefficients) do not hold in general. On the other hand, assuming a necessary smallness condition on $b,d$ or $c,d$, we show a maximum principle and Moser's estimate for subsolutions with ``good" constants. We also show the reverse Moser estimate for nonnegative supersolutions with ``good" constants, under no smallness assumptions when $q<\infty$, leading to the Harnack inequality for nonnegative solutions and local continuity of solutions. Finally, we show that, in the setting of Lorentz spaces, our assumptions are the sharp ones to guarantee these estimates.
\end{abstract}

\section{Introduction}

In this article we are interested in local and global regularity for subsolutions and supersolutions to the equation $\mathcal{L}u=-\dive f+g$, in domains $\Omega\subseteq\bR^n$, where $\mathcal{L}$ is of the form
\[
\mathcal{L}u=-\dive(A\nabla u+bu)+c\nabla u+du.
\]
In particular, we investigate the validity of the maximum principle, Moser's estimate, the Harnack inequality and continuity of solutions, in a scale invariant setting; that is, we want our estimates to not depend on the size of $\Omega$. We will also assume throughout this article that $n\geq 3$.

In this work $A$ will be bounded and uniformly elliptic in $\Omega$: for some $\lambda>0$,
\[
\left<A(x)\xi,\xi\right>\geq\lambda\|\xi\|^2,\quad \forall x\in\Omega,\,\,\,\forall\xi\in\bR^n.
\]
For the lower order coefficients and the terms on the right hand side, we consider Lorentz spaces that are scale invariant under the natural scaling for the equation. That is, we assume that
\[
b,f\in L^{n,1}(\Omega),\quad c\in L^{n,q}(\Omega),\quad d,g\in L^{\frac{n}{2},1}(\Omega),\quad q\leq\infty.
\]
In the case that $q=\infty$, it is also necessary to assume that the norm of $c$ is small for our results to hold. As explained in Section~\ref{secOptimality}, these assumptions are the optimal ones to imply our estimates in the setting of Lorentz spaces. Note also that there will be no size assumption on $\Omega$ and no regularity assumption on $\partial\Omega$. 

The main inspiration for this work comes from the local and global pointwise estimates for subsolutions to the fore mentioned operator in \cite{SakAPDE}, where it is also assumed that $d\geq\dive c$ in the sense of distributions. Focusing on the case when $c,d\equiv 0$ for simplicity, and assuming that $b\in L^{n,1}$, a maximum principle for subsolutions to $-\dive(A\nabla u+bu)\leq-\dive f+g$ is shown in \cite[Proposition 7.5]{SakAPDE}, while a Moser type estimate is the context of \cite[Proposition 7.8]{SakAPDE}. The main feature of these estimates is their scale invariance, with constants that depend only on the ellipticity of $A$ and the $L^{n,1}$ norm of $b$, as well as the $L^{\infty}$ norm of $A$ for the Moser estimate.

Following this line of thought, it could be expected that the consideration of all the lower order coefficients in the definition of $\mathcal{L}$ should yield the same type of scale invariant estimates, with constants being ``good"; that is, depending only on $n$, $q$, the ellipticity of $A$, and the norms of the coefficients involved (as well as $\|A\|_{\infty}$ in some cases). However, it turns out that this does not hold. In particular, if $B_1$ is the unit ball in $\bR^n$, in Proposition~\ref{dShouldBeSmall} we construct a bounded sequence $(d_N)$ in $L^{\frac{n}{2},1}(B_1)$ and a sequence $(u_N)$ of nonnegative $W_0^{1,2}(B_1)$ solutions to the equation $-\Delta u_N+d_Nu_N=0$ in $B_1$, such that
\[
\|u_N\|_{W_0^{1,2}(B_1)}\leq C,\quad\text{while}\quad\|u_N\|_{L^{\infty}(B_{1/2})}\xrightarrow[N\to\infty]{}\infty.
\]
We also show in Remark~\ref{bcShouldBeSmall} that the equation $-\Delta u-\dive(bu)+c\nabla u=0$ has the same feature, which implies that the constants in Moser's local boundedness estimate, as well as the Harnack inequality, cannot be ``good" without any further assumptions.

Since scale invariant estimates with ``good" constants do not hold in such generality, we first prove estimates where the constants are allowed to depend on the coefficients themselves. This is the context of the global bound in Proposition~\ref{GlobalUpperBound}, where it is shown that, if $\Omega\subseteq\bR^n$ is a domain and $u\in Y^{1,2}(\Omega)$ (see \eqref{eq:Y}) is a subsolution to $\mathcal{L}u\leq -\dive f+g$, then, for any $p>0$,
\begin{equation}\label{eq:maxPrinc}
\sup_{\Omega}u^+\leq C\sup_{\partial\Omega}u^++C'\left(\int_{\Omega}|u^+|^p\right)^{\frac{1}{p}}+C\|f\|_{n,1}+C\|g\|_{\frac{n}{2},1},
\end{equation}
where $C$ is a ``good" constant, while $C'$ depends on the coefficients themselves and $p$. Note the appearance of a constant in front of the term $\sup_{\partial\Omega}u^+$; such a constant can be greater than $1$, and this follows from the fact that constants are not necessarily subsolutions to our equation in the generality of our assumptions.

Having proven the previous estimate, we then turn to show various scale invariant estimates with ``good" constants, assuming an extra condition on the lower order coefficients, which is necessary in view of the fore mentioned discussion. Such a condition is some type of smallness: in particular, we either assume that the norms of $b,d$ are small, or that the norms of $c,d$ are small. Under these smallness assumptions, we show in Propositions~\ref{MaxPrincipleC} and \ref{MaxPrincipleB} that we can take $C'=0$ in \eqref{eq:maxPrinc}, leading to a maximum principle, and the Moser estimate for subsolutions to $\mathcal{L}u\leq -\dive f+g$ is shown in Propositions~\ref{MoserB} and \ref{MoserC}; that is, in the case when $b,d$ are small, or $c,d$ are small, then for any $p>0$,
\begin{equation}\label{eq:up}
\sup_{B_r}u\leq C\left(\fint_{B_{2r}}|u^+|^p\right)^{\frac{1}{p}}+C\|f\|_{L^{n,1}(B_{2r})}+C\|g\|_{L^{\frac{n}{2},1}(B_{2r})},
\end{equation}
where the constant $C$ is ``good", and also depends on $p$. In addition, the analogous estimate close to the boundary is deduced in Propositions~\ref{MoserBBoundary} and \ref{MoserCBoundary}.

On the other hand, somewhat surprisingly, we discover that even if the scale invariant Moser estimate with ``good" constants requires some type of smallness, it turns out that the scale invariant reverse Moser estimate with ``good" constants holds in the full generality of our initial assumptions. That is, in Proposition~\ref{lowerBound}, we show that if $u\in W^{1,2}(B_{2r})$ is a nonnegative supersolution to $\mathcal{L}u\geq-\dive f+g$, and under no smallness assumptions (when $q<\infty$), then for some $\alpha=\alpha_n$,
\begin{equation}\label{eq:low}
\left(\fint_{B_r}u^{\alpha}\right)^{\frac{1}{\alpha}}\leq C\inf_{B_{r/2}}u+C\|f\|_{L^{n,1}(B_{2r})}+C\|g\|_{L^{\frac{n}{2},1}(B_{2r})},
\end{equation}
where $C$ is a ``good" constant. Moreover, the analogue of this estimate close to the boundary is deduced in Proposition~\ref{lowerBoundBoundary}. Then, the Harnack inequality (Theorems~\ref{HarnackForB} and \ref{HarnackForC}) and continuity of solutions (Theorems~\ref{ContinuityB} and \ref{ContinuityC}) are shown combining \eqref{eq:up} and \eqref{eq:low}; for those, in order to obtain estimates with ``good" constants, it is again necessary to assume a smallness condition. Finally, having shown the previous estimates, we also obtain their analogues in the generality of our initial assumptions, with constants that depend on the coefficients themselves (Remarks~\ref{noSmallness}, \ref{noSmallness2} and \ref{noSmallness3}).

As a special case, we remark that all the scale invariant estimates above hold, with ``good" constants, in the case of the operators
\[
\mathcal{L}_1u=-\dive(A\nabla u)+c\nabla u,\qquad \mathcal{L}_2u=-\dive(A\nabla u+bu),
\]
under no smallness assumptions when $b\in L^{n,1}$ and $c\in L^{n,q}$, $q<\infty$.

\subsection*{The techniques}

The assumption that the coefficients $b,d$ lie in scale invariant spaces is reflected in the fact that the classical method of Moser iteration does not seem to work in this setting. More specifically, an assumption of the form $b\in L^{n,q}$ for some $q>1$ does not necessarily guarantee pointwise upper bounds (see Remark~\ref{optimalB}), and it is necessary to assume that $b\in L^{n,1}$ in order to deduce these bounds. However, Moser's method does not seem to be ``sensitive" enough to distinguish between the cases $b\in L^{n,1}$ and $b\in L^{n,q}$ for $q>1$. Thus, a procedure more closely related to Lorentz spaces has to be followed, and the first results in this article (Section~\ref{secGlobal}) are based on a symmetrization technique, leading to estimates for decreasing rearrangements. This technique involves a specific choice of test functions and has been used in the past by many authors, going back to Talenti's article \cite{TalentiElliptic}; here we use a slightly different choice, utilized by Cianchi and Mazya in \cite{CianchiMazyaSchrodinger}. However, since all the lower order coefficients are present, our estimates are more complicated, and we have to rely on an argument using Gr{\"o}nwall's inequality (as in \cite{AlvinoLionsTrombetti}, for example) to give a bound on the decreasing rearrangement of our subsolution.

On the other hand, the main drawback of the symmetrization technique is that it does not seem to work well when we combine it with cutoff functions; thus, we are not able to suitably modify it in order to directly show local estimates like \eqref{eq:up}. The idea to overcome this obstacle is to pass from small to large norms using a two-step procedure (in Section~\ref{secLocal}), utilizing the maximum principle. Thus, relying on Moser's estimate for the operator $\mathcal{L}_0u=-\dive(A\nabla u)+c\nabla u$ when the norm of $c$ is small, the first step is a perturbation argument based on the maximum principle that allows us to pass to the operator $\mathcal{L}$ when all the lower order terms have small norms. Then, the second step is an induction argument relying on the maximum principle (similar to the proofs of \cite[Propositions 3.4 and 7.8]{SakAPDE}), which allows us to pass to arbitrary norms for $b$ or $c$. To the best of our knowledge, the combination of the symmetrization technique with the fore mentioned argument in order to obtain local estimates has not appeared in the literature before (with the exception of \cite[Proposition 7.8]{SakAPDE}, which used estimates on Green's function), and it is one of the novelties of this article.

Since we do not obtain Moser's estimate \eqref{eq:up} using test functions and Moser's iteration, in order to deduce the reverse Moser estimate \eqref{eq:low} we transform supersolutions to subsolutions via exponentiation (in Section~\ref{secHarnack}). The advantage of this procedure is that, if the exponent is negative and close to $0$, we obtain a subsolution to an equation with the coefficients $b,d$ being small, thus we can apply \eqref{eq:up} to obtain a scale invariant estimate with ``good" constants, without any smallness assumptions (when $q<\infty$). This estimate has negative exponents appearing on the left hand side, and we show \eqref{eq:low} passing to positive exponents using an estimate for supersolutions and the John-Nirenberg inequality (as in \cite{MoserHarnack}). One drawback of this technique is that we do not obtain the full range $\alpha\in(0,\frac{n}{n-2})$ for the left hand side, as in \cite[Theorem 8.18]{Gilbarg}, but this does not affect the proof of the Harnack inequality. Then, the Harnack inequality and continuity of solutions are deduced combining \eqref{eq:up} and \eqref{eq:low}.

Finally, the optimality of our assumptions is shown in Section~\ref{secOptimality}. In particular, the sharpness of our spaces to guarantee some type of estimates (either having ``good" constants, or not) is shown, and the failure of scale invariant estimates with ``good" constants is exhibited by the construction in Proposition~\ref{dShouldBeSmall}.

\subsection*{Past works}

The first fundamental contribution to regularity for equations with rough coefficients was made by De Giorgi \cite{DeGiorgiContinuity} and Nash \cite{NashContinuity} and concerned H{\"o}lder continuity of solutions to the operator $-\dive(A\nabla u)=0$; a different proof, based on the Harnack inequality, was later given by Moser in \cite{MoserHarnack}. The literature concerning this subject is vast, and we refer to the books by Ladyzhenskaya and Ural'tseva \cite{LadyzhenskayaUraltseva} and Gilbarg and Trudinger \cite{Gilbarg}, as well as the references therein, for equations that also have lower order coefficients in $L^p$. However, in these results, the norms of those spaces are not scale invariant under the natural scaling of the equation, so it is not possible to obtain scale invariant estimates without extra assumptions on the coefficients (like smallness, for example). One instance of a scale invariant setting where $b,d,f,g\equiv 0$ and $c\in L^n$ was later treated by Nazarov and Ural'tseva in \cite{NazarovUraltsevaHarnack}.

Another well studied case of coefficients is the class of Kato spaces. The first work on estimates for Schr{\"o}dinger operators with the Laplacian and potentials in a suitable Kato class was by Aizenman and Simon in \cite{AizenmanSimonHarnack} using probabilistic techniques, which was later generalized (with nonprobabilistic techniques) by Chiarenza, Fabes and Garofalo in \cite{ChiarenzaFabesGarofalo}, allowing a second order part in divergence form. The case in \cite{AizenmanSimonHarnack} was also later treated using nonprobabilistic techniques by Simader \cite{SimaderElementary} and Hinz and Kalf \cite{HinzKalfSubsolutionEstimates}. In these works, $b,c\equiv 0$, while $d$ is assumed to belong to $K_n^{\loc}(\Omega)$, which is comprised of all functions $d$ in $\Omega$ such that $\eta_{\Omega_1,d}(r)\to 0$ as $r\to 0$, for all $\Omega_1$ compactly supported in $\Omega$, where
\[
\eta_{\Omega,d}(r)=\sup_{x\in\bR^n}\int_{\Omega\cap B_r(x)}\frac{|d(y)|}{|x-y|^{n-2}}\,dy
\]
(or, in some works, the supremum is considered over $x\in\Omega$). Moreover, adding the drift term $c\nabla u$, regularity estimates for $c$ in a suitable Kato class were shown by Kurata in \cite{KurataContinuity}.

From H{\"o}lder's inequality (see \eqref{eq:Holder}), if $d\in L^{\frac{n}{2},1}(\Omega)$, we have that
\[
\eta_{\Omega,d}(r)\leq C_n\sup_{x\in\bR^n}\|d\|_{L^{\frac{n}{2},1}(\Omega\cap B_r(x))}\xrightarrow[r\to 0]{} 0,
\]
therefore $L^{\frac{n}{2},1}(\Omega)\subseteq K_n^{\loc}(\Omega)$; that is, the class of Lorentz spaces we consider in this work is weaker than the Kato class. However, the constants in the results involving Kato classes depend on the rate of convergence of the function $\eta$ defined above to $0$, leading to different constants than the ones that we obtain in this article. More specifically, let $d\in L^{\frac{n}{2},1}(\bR^n)$ be supported in $B_1$, and set $d_M(x)=M^2d(Mx)$ for $M>0$. Then, we can show that $\eta_{B_1,d_M}(Mr)=\eta_{B_1,d}(r)$, thus the functions $\eta_{B_1,d_M},\eta_{B_1,d}$ do not converge to $0$ at the same rate. Hence, the estimates shown using techniques involving Kato spaces, and concerning subsolutions $u_M$ to
\[
-\Delta u_M+d_Mu_M\leq 0
\]
in $B_1$, lead to constants that could blow up as $M\to\infty$. On the other hand, the $L^{\frac{n}{2},1}(B_1)$ norm of $d_M$ is bounded above uniformly in $M$, hence the results we prove in this article are not direct consequences of their counterparts involving Kato classes.

Finally, considering all the lower order terms, Mourgoglou in \cite{MourgoglouRegularity} shows regularity estimates when the coefficients $b,d$ belong to the scale invariant Dini type Kato-Stummel classes (see \cite[Section 2.2]{MourgoglouRegularity}), and also constructs Green's functions. However, the framework we consider in this article for the Moser estimate and Harnack's inequality, as well as our techniques, are different from the ones in \cite{MourgoglouRegularity}. For example, focusing on the case when $c,d\equiv 0$, the coefficient $b$ in \cite[Theorems 4.4, 4.5 and 4.12]{MourgoglouRegularity} is assumed to be such that $|b|^2\in\mathcal{K}_{{\rm Dini},2}$, which does not cover the case $b\in L^{n,1}$, since for any $\alpha>1$, the function $b(x)=x|x|^{-2}\left(-\ln|x|\right)^{-a}$ is a member of $L^{n,1}(B_{1/e})$, while $|b|^2\notin\mathcal{K}_{{\rm Dini},2}(B_{1/e})$.

We conclude with a brief discussion on symmetrization techniques. Such a technique was used by Weinberger in \cite{WeinbergerSymmetrization} in order to show boundedness of solutions with vanishing trace to $-\dive(A\nabla u)=-\dive f$ and $-\dive(A\nabla u)=g$, where $f\in L^p$ and $g\in L^{\frac{p}{2}}$, $p>n$. Another well known technique consists of a use of test functions that leads to bounds for the derivative of the integral of $|\nabla u|^2$ over superlevel sets of $u$, where $u$ is a subsolution to $\mathcal{L}u\leq-\dive f+g$. This bound, combined with Talenti's inequality \cite[estimate (40)]{TalentiElliptic}, gives an estimate for the derivative of the decreasing rearrangement of $u$, leading to bounds for $u$ in various spaces and comparison results. This technique has been used by many authors in order to study regularity properties of solutions to second order pdes, some works being \cite{AlvinoTrombetti78}, \cite{AlvinoTrombetti81}, \cite{AlvinoLionsTrombetti}, \cite{BettaMercaldoComparisonAndRegularity}, \cite{DelVecchioPosteraroExistenceMeasure}, \cite{DelVecchioPosteraroNoncoercive}, \cite{AlvinoTrombettiLionsMatarasso}, \cite{AlvinoFeroneTrombetti}, \cite{Buccheri}. However, as we mentioned above, to the best of our knowledge, no local boundedness results have been deduced using this method so far.

We also mention that, in order to treat lower order coefficients, pseudo-rearrangements of functions are also considered the literature, which are derivatives of integrals over suitable sets $\Omega(s)\subseteq\Omega$ (see, for example, \cite[page 11]{SakAPDE}). On the contrary, in this work we avoid this procedure, and as we mentioned above we rely instead on a slightly different approach, inspired by \cite{CianchiMazyaSchrodinger}.

\vspace{4mm}

\begin{acknowledgments*}
We would like to thank Professors Carlos Kenig and Andrea Cianchi for useful conversations regarding some parts of this article.
\end{acknowledgments*}

\section{Preliminaries}

\subsection{Definitions}

If $\Omega\subseteq\bR^n$ is a domain, $W_0^{1,2}(\Omega)$ will be the closure of $C_c^{\infty}(\Omega)$ under the $W^{1,2}$ norm, where
\[
\|u\|_{W^{1,2}(\Omega)}=\|u\|_{L^2(\Omega)}+\|\nabla u\|_{L^2(\Omega)}.
\]
When $\Omega$ has infinite measure, the space $W^{1,2}(\Omega)$ is not well suited to the problems we consider. For this reason, we let $Y_0^{1,2}(\Omega)$ be the closure of $C_c^{\infty}(\Omega)$ under the $Y^{1,2}$ norm, where
\begin{equation}\label{eq:Y}
\|u\|_{Y^{1,2}(\Omega)}=\|u\|_{L^{2^*}(\Omega)}+\|\nabla u\|_{L^2(\Omega)},
\end{equation}
and $2^*=\frac{2n}{n-2}$ is the Sobolev conjugate to $2$. From the Sobolev inequality
\[
\|\phi\|_{L^{2^*}(\Omega)}\leq C_n\|\nabla\phi\|_{L^2(\Omega)},
\]
for all $\phi\in C_c^{\infty}(\Omega)$, we have that $Y_0^{1,2}(\Omega)=W_0^{1,2}(\Omega)$ in the case $|\Omega|<\infty$. We also set $Y^{1,2}(\Omega)$ to be the space of weakly differentiable $u\in L^{2^*}(\Omega)$, such that $\nabla u\in L^2(\Omega)$, with the $Y^{1,2}$ norm. 

If $u$ is a measurable function in $\Omega$, we define the distribution function
\begin{equation}\label{eq:distrFun}
\mu_u(t)=\left|\left\{x\in\Omega: |u(x)|>t\right\}\right|,\quad t>0.
\end{equation}
If $u\in L^p(\Omega)$ for some $p\geq 1$, then $\mu_u(t)<\infty$ for any $t>0$. Moreover, we define the decreasing rearrangement of $u$ by
\begin{equation}\label{eq:DecrRearr}
u^*(\tau)=\inf\{t>0:\mu_u(t)\leq \tau\},
\end{equation}
as in \cite[(1.4.2), page 45]{Grafakos}. Then, $u^*$ is equimeasurable to $u$: that is,
\begin{equation}\label{eq:equimeasurable}
\left|\left\{x\in\Omega:|u(x)|>t\right\}\right|=\left|\left\{s>0:u^*(s)>t\right\}\right|\,\,\,\text{for all}\,\,\,t>0.
\end{equation}
Given a function $f\in L^p(\Omega)$, we consider its maximal function
\begin{equation}\label{eq:MaximalFunction}
\mathcal{M}_{f}(\tau)=\frac{1}{\tau}\int_0^{\tau}f^*(\sigma)\,d\sigma,\quad \tau>0.
\end{equation}
Let $p\in(0,\infty)$ and $q\in(0,\infty]$. If $f$ is a function defined in $\Omega$, we define the Lorentz seminorm
\begin{equation}\label{eq:LorentzDfn}
\|f\|_{L^{p,q}(\Omega)}=\left\{\begin{array}{c l} \displaystyle \left(\int_0^{\infty}\left(\tau^{\frac{1}{p}}f^*(\tau)\right)^q\frac{d\tau}{\tau}\right)^{\frac{1}{q}}, & q<\infty \\ \displaystyle\sup_{\tau>0}\tau^{\frac{1}{p}}f^*(\tau),& q=\infty,\end{array}\right.
\end{equation}
as in \cite[Definition 1.4.6]{Grafakos}. We say that $f\in L^{p,q}(\Omega)$ if $\|f\|_{L^{p,q}(\Omega)}<\infty$. Then $\|\cdot\|_{p,q}$ is indeed a seminorm, since
\begin{equation}\label{eq:Seminorm}
\|f+g\|_{p,q}\leq C_{p,q}\|f\|_{p,q}+C_{p,q}\|g\|_{p,q},
\end{equation}
from \cite[(1.4.9), page 50]{Grafakos}. In addition, from \cite[Proposition 1.4.10]{Grafakos}, Lorentz spaces increase if we increase the second index, with
\begin{equation}\label{eq:LorentzNormsRelations}
\|f\|_{L^{p,r}}\leq C_{p,q,r}\|f\|_{L^{p,q}}\,\,\,\,\text{for all}\,\,\,0<p<\infty,\,\,0<q<r\leq\infty.
\end{equation}
H{\"o}lder's inequality for Lorentz functions states that
\begin{equation}\label{eq:Holder}
\|fg\|_{L^{p,q}}\leq C_{p_1,q_1,p_2,q_2}\|f\|_{L^{p_1,q_1}}\|g\|_{L^{p_2,q_2}},
\end{equation}
whenever $0<p,p_1,p_2<\infty$ and $0<q,q_1,q_2\leq\infty$ satisfy the relations $\frac{1}{p}=\frac{1}{p_1}+\frac{1}{p_2}$, $\frac{1}{q}=\frac{1}{q_1}+\frac{1}{q_2}$ (see \cite[Exercise 1.4.19]{Grafakos}).

If $p\in(1,\infty]$ and $q\in[1,\infty)$, then \cite[Theorem 3.21, page 204]{SteinWeiss} implies that
\begin{equation}\label{eq:maximalInequality}
\|\mathcal{M}_f\|_{p,q}\leq C_p\|f\|_{p,q},
\end{equation}
where $\mathcal{M}_f$ is the maximal function defined in \eqref{eq:MaximalFunction}.

For a function $u\in Y^{1,2}$, we will say that $u\leq s$ on $\partial\Omega$ if $(u-s)^+=\max\{u-s,0\}\in Y_0^{1,2}(\Omega)$. Moreover, $\sup_{\partial\Omega}u$ will be defined as the infimum of all $s\in\mathbb R$ such that $u\leq s$ on $\partial\Omega$.

We now turn to the definitions of subsolutions, supersolutions and solutions. For this, let $\Omega\subseteq\bR^n$ be a domain, and let $A$ be bounded in $\Omega$, $b,c\in L^{n,\infty}(\Omega)$, $d\in L^{\frac{n}{2},\infty}(\Omega)$ and $f,g\in L^1_{\loc}(\Omega)$. If $\mathcal{L}u=-\dive(A\nabla u+bu)+c\nabla u+du$, we say that $u\in Y^{1,2}(\Omega)$ is a solution to the equation $\mathcal{L}u=-\dive f+g$ in $\Omega$, if
\[
\int_{\Omega}A\nabla u\nabla\phi+b\nabla\phi\cdot u+c\nabla u\cdot\phi+du\phi=\int_{\Omega}f\nabla\phi+g\phi,\,\,\,\text{for all}\,\,\,\phi\in C_c^{\infty}(\Omega).
\]
Moreover, we say that $u\in Y^{1,2}(\Omega)$ is a subsolution to $\mathcal{L}u\leq-\dive f+g$ in $\Omega$, if
\begin{equation}\label{eq:subsolDfn}
\int_{\Omega}A\nabla u\nabla\phi+b\nabla\phi\cdot u+c\nabla u\cdot\phi+du\phi\leq\int_{\Omega}f\nabla\phi+g\phi,\,\,\,\text{for all}\,\,\,\phi\in C_c^{\infty}(\Omega),\,\phi\geq 0.
\end{equation}
We also say that $u$ is a supersolution to $\mathcal{L}u\geq-\dive f+g$, if $-u$ is a subsolution to $\mathcal{L}(-u)\leq \dive f-g$.

\subsection{Main lemmas}

We now discuss some lemmas that we will use in the sequel. We begin with the following estimate, in which we show that a function in $L^{n,q}$ for $q>1$ fails to be in $L^{n,1}$ by a logarithm, with constant as small as we want. This fact will be useful in the proof of Lemma~\ref{MainEstimate}.

\begin{lemma}\label{NormWithE}
Let $f\in L^{n,q}(\Omega)$ for some $q\in(1,\infty)$. Then, for any $0<\sigma_1<\sigma_2<\infty$ and $\e>0$,
\[
\int_{\sigma_1}^{\sigma_2}\tau^{\frac{1}{n}-1}f^*(\tau)\,d\tau\leq \e\ln\frac{\sigma_2}{\sigma_1}+C\|f\|_{n,q}^q,
\]
where $C$ depends on $q$ and $\e$.
\end{lemma}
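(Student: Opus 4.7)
The plan is to split the integration interval $(\sigma_1,\sigma_2)$ into two pieces according to whether $\tau^{1/n}f^*(\tau)$ is small or large compared to the parameter $\varepsilon$. The small piece will give the logarithmic term, and the large piece will be absorbed into the $L^{n,q}$ seminorm, exploiting that $q>1$.

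More precisely, I would set
\[
E=\bigl\{\tau\in(\sigma_1,\sigma_2):\tau^{1/n}f^*(\tau)\leq\varepsilon\bigr\},\qquad E^c=(\sigma_1,\sigma_2)\setminus E,
\]
and bound the integral as a sum over $E$ and $E^c$. On $E$ we have $\tau^{1/n-1}f^*(\tau)\leq\varepsilon\,\tau^{-1}$, so
\[
\int_E \tau^{1/n-1}f^*(\tau)\,d\tau\leq\varepsilon\int_{\sigma_1}^{\sigma_2}\frac{d\tau}{\tau}=\varepsilon\ln\frac{\sigma_2}{\sigma_1}.
\]
On $E^c$ we write $\tau^{1/n-1}f^*(\tau)=\tau^{-1}\bigl(\tau^{1/n}f^*(\tau)\bigr)$ and use $\tau^{1/n}f^*(\tau)>\varepsilon$, so that (since $q-1>0$)
\[
\tau^{1/n}f^*(\tau)=\bigl(\tau^{1/n}f^*(\tau)\bigr)^{1-q}\bigl(\tau^{1/n}f^*(\tau)\bigr)^q\leq\varepsilon^{1-q}\bigl(\tau^{1/n}f^*(\tau)\bigr)^q.
\]
Hence, recalling the definition \eqref{eq:LorentzDfn},
\[
\int_{E^c}\tau^{1/n-1}f^*(\tau)\,d\tau\leq\varepsilon^{1-q}\int_{\sigma_1}^{\sigma_2}\bigl(\tau^{1/n}f^*(\tau)\bigr)^q\frac{d\tau}{\tau}\leq\varepsilon^{1-q}\|f\|_{n,q}^q.
\]
Adding the two bounds yields the desired inequality with $C=\varepsilon^{1-q}$, which depends only on $\varepsilon$ and $q$.

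There is no real obstacle here; the only place care is needed is verifying that $q>1$ is used to make the exponent $1-q$ negative (equivalently, $\varepsilon^{1-q}$ finite) when trading one factor of $\tau^{1/n}f^*(\tau)$ for $q$ factors against the threshold $\varepsilon$. The argument is purely a level-set decomposition and does not require any extra structural properties of decreasing rearrangements beyond the definition of the Lorentz seminorm.
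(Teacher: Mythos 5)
Your proof is correct, and it takes a genuinely different route from the paper's. The paper applies H\"older's inequality with conjugate exponents $p,q$ to split $\tau^{\frac{1}{n}-1}f^*(\tau)=\tau^{-1/p}\cdot\tau^{\frac{1}{n}-\frac{1}{q}}f^*(\tau)$, obtaining $\left(\ln\frac{\sigma_2}{\sigma_1}\right)^{1/p}\|f\|_{n,q}$, and then invokes Young's inequality to separate the two factors additively. Your argument instead decomposes the domain of integration into the level sets where $\tau^{1/n}f^*(\tau)\leq\varepsilon$ and where it exceeds $\varepsilon$, treats the small set by the trivial bound and absorbs the large set directly into $\|f\|_{n,q}^q$ using $x\leq\varepsilon^{1-q}x^q$ for $x>\varepsilon$. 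Conceptually these are close (the H\"older-plus-Young route is a ``smooth'' counterpart of your hard threshold), but yours is more elementary in that it uses no integral inequality at all and produces the cleaner explicit constant $C=\varepsilon^{1-q}$; the paper's route is the one that generalizes most naturally when the threshold is not a fixed constant. Both legitimately rely on $q>1$ at the decisive step.
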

\begin{proof}
Let $p\in(1,\infty)$ be the conjugate exponent to $q$. Then, from H{\"o}lder's inequality and \eqref{eq:LorentzDfn},
\begin{align*}
\int_{\sigma_1}^{\sigma_2}\tau^{\frac{1}{n}-1}f^*(\tau)\,d\tau&=\int_{\sigma_1}^{\sigma_2}\tau^{-\frac{1}{p}}\tau^{\frac{1}{n}-\frac{1}{q}}f^*(\tau)\,d\tau\leq \left(\int_{\sigma_1}^{\sigma_2}\tau^{-1}\,d\tau\right)^{\frac{1}{p}}\left(\int_{\sigma_1}^{\sigma_2}\tau^{\frac{q}{n}-1}f^*(\tau)^q\,d\tau\right)^{\frac{1}{q}}\\
&\leq\left(p\e\ln\frac{\sigma_2}{\sigma_1}\right)^{\frac{1}{p}}\cdot(p\e)^{-\frac{1}{p}}\|f\|_{n,q}\leq\e\ln\frac{\sigma_2}{\sigma_1}+\frac{(p\e)^{-\frac{q}{p}}}{q}\|f\|_{n,q}^q,
\end{align*}
where we also used Young's inequality for the last step.
\end{proof}

The following describes the behavior of the Lorentz seminorm on disjoint sets.

\begin{lemma}\label{NormDisjoint}
Let $\Omega\subseteq\bR^n$ be a set, and let $X,Y$ be nonempty and disjoint subsets of $\Omega$. If $f\in L^{p,q}(\Omega)$ for some $p,q\in[1,\infty)$, then
\[
\|f\|_{L^{p,q}(\Omega)}^r\geq\|f\|_{L^{p,q}(X)}^r+\|f\|_{L^{p,q}(Y)}^r,\quad r=\max\{p,q\}.
\]
\end{lemma}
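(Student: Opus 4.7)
The starting point is the layer-cake representation of the Lorentz quasinorm in terms of the distribution function. For any measurable $S\subseteq\Omega$, write $\mu_{f,S}(t)=|\{x\in S:|f(x)|>t\}|$. Splitting $f^*(\tau)^q=q\int_0^{f^*(\tau)}t^{q-1}\,dt$, switching the order of integration, and using \eqref{eq:equimeasurable} in the form $|\{\tau:f^*(\tau)>t\}|=\mu_{f,S}(t)$, one obtains
\[
\|f\|_{L^{p,q}(S)}^q = p\int_0^\infty t^{q-1}\,\mu_{f,S}(t)^{q/p}\,dt.
\]
Since $X,Y\subseteq\Omega$ are disjoint, $\mu_{f,\Omega}(t)\ge \mu_{f,X\cup Y}(t)=\mu_{f,X}(t)+\mu_{f,Y}(t)$ for every $t>0$. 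The task then reduces to comparing an integrated expression in $\mu_{f,\Omega}$ with the same expression applied to $\mu_{f,X}$ and $\mu_{f,Y}$ separately; the proof splits according to whether $r=q$ or $r=p$.

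\textbf{Case $p\le q$ (so $r=q$).} Here $q/p\ge 1$, so superadditivity of $s\mapsto s^{q/p}$ on $[0,\infty)$ gives
\[
\mu_{f,\Omega}(t)^{q/p}\;\ge\;\bigl(\mu_{f,X}(t)+\mu_{f,Y}(t)\bigr)^{q/p}\;\ge\;\mu_{f,X}(t)^{q/p}+\mu_{f,Y}(t)^{q/p}
\]
for every $t>0$. Multiplying by $p\,t^{q-1}$ and integrating yields directly
\[
\|f\|_{L^{p,q}(\Omega)}^{q}\;\ge\;\|f\|_{L^{p,q}(X)}^{q}+\|f\|_{L^{p,q}(Y)}^{q},
\]
which is the desired inequality with $r=q$.

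\textbf{Case $p>q$ (so $r=p$).} Here $q/p<1$ and the pointwise superadditivity fails, so I instead set $\phi_S(t):=\mu_{f,S}(t)^{1/p}$ and rewrite
\[
\|f\|_{L^{p,q}(S)}=\bigl\|\phi_S\bigr\|_{L^q(\nu)},\qquad d\nu(t):=p\,t^{q-1}\,dt.
\]
The distributional inequality above becomes $\phi_\Omega(t)\ge \bigl(\phi_X(t)^p+\phi_Y(t)^p\bigr)^{1/p}$. Considering $G(t,\omega):=\phi_\omega(t)$ on $(0,\infty)\times\{X,Y\}$, the quantity $\|(\phi_X^p+\phi_Y^p)^{1/p}\|_{L^q(\nu)}$ is $\ell^p$ inside, $L^q(\nu)$ outside, while $(\|f\|_{L^{p,q}(X)}^p+\|f\|_{L^{p,q}(Y)}^p)^{1/p}$ is $L^q(\nu)$ inside, $\ell^p$ outside. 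Minkowski's integral inequality, applicable because the inner exponent $p$ exceeds the outer exponent $q$, says precisely that the former dominates the latter:
\[
\bigl\|(\phi_X^p+\phi_Y^p)^{1/p}\bigr\|_{L^q(\nu)}\;\ge\;\bigl(\|\phi_X\|_{L^q(\nu)}^p+\|\phi_Y\|_{L^q(\nu)}^p\bigr)^{1/p}.
\]
Combining with $\|f\|_{L^{p,q}(\Omega)}\ge \|(\phi_X^p+\phi_Y^p)^{1/p}\|_{L^q(\nu)}$ and raising to the $p$-th power gives the desired inequality with $r=p$.

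\textbf{Main obstacle.} The only delicate step is the second case: one must set up $G(t,\omega)$ and verify that Minkowski's integral inequality is being applied in the correct direction (``larger exponent inside, smaller outside gives the larger norm''), which is exactly the regime $p\ge q$ that we are in. Once the two norms have been identified as the two iterated norms of the same $G$, the inequality is immediate; everything else is bookkeeping with the layer-cake formula and the pointwise comparison of distribution functions on disjoint sets.
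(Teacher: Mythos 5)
Your proof is correct, and it is essentially the same as the paper's. Both proofs start from the layer-cake identity $\|f\|_{L^{p,q}(S)}^q=p\int_0^\infty t^{q-1}\mu_{f,S}(t)^{q/p}\,dt$, use the superadditivity $\mu_{f,\Omega}\ge\mu_{f,X}+\mu_{f,Y}$ coming from disjointness, and split according to the sign of $q/p-1$: when $q/p\ge1$ both proofs use the elementary pointwise superadditivity of $s\mapsto s^{q/p}$, and when $q/p<1$ your Minkowski-integral-inequality step (with $\ell^p$ inner, $L^q(\nu)$ outer, $p>q\ge1$) reduces, after undoing the substitution $\phi_S=\mu_{f,S}^{1/p}$, to exactly the reverse Minkowski inequality for the exponent $q/p\le1$ that the paper invokes, so the two arguments are the same inequality written in different notation.
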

\begin{proof}
Let $\mu$, $\mu_X$, $\mu_Y$ be the distribution functions of $f,f|_X$ and $f|_Y$, respectively. As in \cite[Lemma 2.4]{SakAPDE}, we have that $\mu\geq\mu_X+\mu_Y$. Also, if $p\geq q$, then $\frac{q}{p}\leq 1$, hence the reverse Minkowski inequality shows that
\[
\left(\int_0^{\infty}(\mu_X(t)+\mu_Y(t))^{\frac{q}{p}}s^{q-1}\,ds\right)^{\frac{p}{q}}\geq\left(\int_0^{\infty}\mu_X(t)^{\frac{q}{p}}s^{q-1}\,ds\right)^{\frac{p}{q}}+\left(\int_0^{\infty}\mu_Y(t)^{\frac{q}{p}}s^{q-1}\,ds\right)^{\frac{p}{q}}.
\]
On the other hand, if $q>p$, then $\frac{q}{p}>1$, hence $a^{\frac{q}{p}}+b^{\frac{q}{p}}\leq (a+b)^{\frac{q}{p}}$ for all $a,b>0$. Therefore,
\[
\int_0^{\infty}(\mu_X(t)+\mu_Y(t))^{\frac{q}{p}}s^{q-1}\,ds\geq\int_0^{\infty}\mu_X(t)^{\frac{q}{p}}s^{q-1}\,ds+\int_0^{\infty}\mu_Y(t)^{\frac{q}{p}}s^{q-1}\,ds.
\]
Then, the proof follows from the expression for the $L^{p,q}$ seminorm in \cite[Proposition 1.4.9]{Grafakos}.
\end{proof}

The next lemma will be useful in order to reduce to the case $d=0$.

\begin{lemma}\label{Reduction}
Let $\Omega\subseteq\bR^n$ be a domain, and $d\in L^{\frac{n}{2},1}(\Omega)$. Then there exists a weakly differentiable vector valued function $e\in L^{n,1}(\Omega)$, with $\dive e=d$ in $\Omega$ and $\|e\|_{L^{n,1}(\Omega)}\leq C_n\|d\|_{L^{\frac{n}{2},1}(\Omega)}$.
\end{lemma}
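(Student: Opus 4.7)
The plan is to construct $e$ as minus the gradient of a Newtonian potential of $d$ and then estimate the Lorentz norm of the resulting Riesz-type potential. First, extend $d$ by zero outside $\Omega$ so that $d \in L^{\frac{n}{2},1}(\bR^n)$ with the same seminorm. Let $N(x) = c_n |x|^{2-n}$ be the normalized fundamental solution of $-\Delta$ on $\bR^n$, and set
\[
u(x) = (N*d)(x) = c_n\int_{\bR^n}\frac{d(y)}{|x-y|^{n-2}}\,dy, \qquad e(x) = -\nabla u(x).
\]
Differentiating under the integral, $e(x) = c_n'\int_{\bR^n} \frac{x-y}{|x-y|^n}\,d(y)\,dy$, and in the sense of distributions on $\bR^n$ (hence on $\Omega$) one has $\dive e = -\Delta u = d$. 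So the only remaining task is the Lorentz estimate
\[
\|e\|_{L^{n,1}(\Omega)} \leq \|e\|_{L^{n,1}(\bR^n)} \leq C_n\|d\|_{L^{\frac{n}{2},1}(\bR^n)}=C_n\|d\|_{L^{\frac{n}{2},1}(\Omega)}.
\]

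The pointwise bound $|e(x)| \leq C_n (I_1 |d|)(x)$, where $I_1|d|(x) = \int |x-y|^{1-n}|d(y)|\,dy$ is the Riesz potential of order $1$, reduces this to a mapping property for $I_1$. The kernel $K(x) = |x|^{1-n}$ lies in the weak Lorentz space $L^{\frac{n}{n-1},\infty}(\bR^n)$, since its distribution function is $|\{K>t\}| = c_n t^{-n/(n-1)}$. We then invoke the O'Neil convolution inequality for Lorentz spaces \cite[Theorem 2.6 / Exercise 1.4.19]{Grafakos}:
\[
\|K*d\|_{L^{n,1}} \leq C\|K\|_{L^{\frac{n}{n-1},\infty}}\|d\|_{L^{\frac{n}{2},1}},
\]
which is applicable since the exponents satisfy the correct Young-type relations $1+\tfrac{1}{n} = \tfrac{n-1}{n}+\tfrac{2}{n}$ and $1 = \tfrac{1}{\infty}+\tfrac{1}{1}$. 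This gives exactly the desired bound.

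The main obstacle is purely bookkeeping: one has to make sure that the definition of $e$ makes sense a.e.\ (which follows from the O'Neil estimate above applied to $|d|$, yielding finiteness of $(I_1|d|)(x)$ for a.e.\ $x$), and that the weak divergence identity $\dive e = d$ transfers from $\bR^n$ to $\Omega$ (immediate since testing against $\phi \in C_c^\infty(\Omega)$ is the same as testing against $\phi$ extended by zero on $\bR^n$). If one prefers to avoid citing the Lorentz-space form of O'Neil's theorem, an equivalent route is to compute the decreasing rearrangement of $I_1|d|$ via the standard pointwise bound
\[
(I_1|d|)^*(\tau) \leq C_n\Bigl(\tau^{\frac{1}{n}-1}\int_0^{\tau} d^*(s)\,ds + \int_{\tau}^{\infty}s^{\frac{1}{n}-1}d^*(s)\,ds\Bigr),
\]
multiply by $\tau^{\frac{1}{n}-1}$, integrate, and apply Hardy's inequality, arriving again at $\|I_1|d|\|_{L^{n,1}}\leq C_n\|d\|_{L^{\frac{n}{2},1}}$.
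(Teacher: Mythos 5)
Your proof follows the paper's argument almost verbatim: extend $d$ by zero to $\bR^n$, form the Newtonian potential, take $e$ to be (a signed) gradient of it so that $\dive e = d$, bound $|e|$ pointwise by the Riesz potential $I_1|d|$, and conclude with the Lorentz-space convolution (O'Neil) estimate --- which is exactly the part of \cite[Exercise 1.4.19]{Grafakos} the paper cites. The one small thing the paper does that your sketch leaves implicit is to invoke \cite[Theorem 9.9]{Gilbarg} to guarantee that the Newtonian potential is \emph{twice weakly differentiable}, so that $e$ is a weakly differentiable vector field as the lemma asserts, rather than merely that $\dive e = d$ holds in the sense of distributions.
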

\begin{proof}	Extend $d$ by $0$ outside $\Omega$, and consider the Newtonian potential $v$ of $d$; that is, we set
\[
w(x)=C_n\int_{\bR^n}\frac{d(y)}{|x-y|^{n-2}}\,dy.
\]
From \cite[Theorem 9.9]{Gilbarg} we have that $w$ is twice weakly differentiable in $\Omega$, and $\Delta w=d$. Setting $e=\nabla w$, we have that $\dive e=d$. Moreover, $|e(x)|=|\nabla w(x)|\leq C_n\int_{\bR^n}\frac{|d(y)|}{|x-y|^{n-1}}\,dy,$
and the estimate follows from the first part of \cite[Exercise 1.4.19]{Grafakos}.
\end{proof}

The next lemma shows that $u^*$ is locally absolutely continuous, when $u\in Y^{1,2}$.

\begin{lemma}\label{uAbsCts}
Let $\Omega$ be a domain and $u\in Y_0^{1,2}(\Omega)$. Then $u^*$ is absolutely continuous in $(a,b)$, for any $0<a<b<\infty$.
\end{lemma}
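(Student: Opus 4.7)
The plan is to establish absolute continuity of $u^*$ on $(a,b)$ in two stages: first verifying that $u^*$ is continuous on $(0,\infty)$, and then upgrading continuity to absolute continuity using the Sobolev inequality together with absolute continuity of the integral of $|\nabla u|^2$.

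For continuity, I would argue by contradiction. A jump of $u^*$ at some $\tau_0 > 0$, with left limit $t_2$ and right limit $t_1 < t_2$, forces $\mu_u$ to be constant on $[t_1, t_2)$; equivalently, $|\{t_1 < u < t_2\}| = 0$. Consider the truncation $w := \min\bigl((u-t_1)^+,\, t_2 - t_1\bigr)$, which lies in $Y_0^{1,2}(\Omega)$ by the chain rule for Sobolev functions. Under the vanishing assumption, $w$ takes only the values $0$ and $t_2 - t_1$ almost everywhere, so $w/(t_2-t_1) = \chi_{\{u \geq t_2\}}$ is a Sobolev characteristic function. Since $\Omega$ is a connected open set and the weak gradient of such a function vanishes a.e.\ (by the Stampacchia-type theorem applied to its level sets), it must be a.e. constant, and both possibilities contradict the hypothesized jump.

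For absolute continuity on $(a,b)$, the central quantitative tool is the Sobolev estimate applied to $w$. For $\tau_1 < \tau_2$ in $(a,b)$ with $t_i := u^*(\tau_i)$, the bound $|\{u \geq t_2\}| \geq \tau_1 \geq a$ combined with the embedding $Y_0^{1,2} \hookrightarrow L^{2^*}$ gives
\[
u^*(\tau_1) - u^*(\tau_2) \leq \frac{C_n}{a^{1/2^*}}\left(\int_{\{t_1 < u < t_2\}}|\nabla u|^2\,dx\right)^{1/2}.
\]
For a finite pairwise-disjoint family $I_j = (\tau_1^{(j)}, \tau_2^{(j)}) \subset (a,b)$, setting $t_i^{(j)} := u^*(\tau_i^{(j)})$, the sets $E_j := \{t_1^{(j)} < u < t_2^{(j)}\}$ are pairwise disjoint with $|E_j| \leq |I_j|$. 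Invoking the absolute continuity of the set function $F \mapsto \int_F |\nabla u|^2\,dx$ (valid since $|\nabla u|^2 \in L^1(\Omega)$), one seeks to conclude that $\sum_j\bigl(u^*(\tau_1^{(j)}) - u^*(\tau_2^{(j)})\bigr)$ is controlled by $\sum_j |I_j|$.

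The main obstacle is the summation step. Squaring and summing the Sobolev estimate yields only the $\ell^2$-type bound $\sum_j(u^*(\tau_1^{(j)}) - u^*(\tau_2^{(j)}))^2 \lesssim \int_{\bigcup_j E_j}|\nabla u|^2$, whereas absolute continuity requires a linear sum bound. To close this gap I would either use the ACL characterization of $Y_0^{1,2}$-functions together with Fubini to show that $\mu_u$ is absolutely continuous on each of its continuity intervals (so that $u^* = \mu_u^{-1}$ inherits AC on the corresponding $\tau$-intervals, with constant pieces of $u^*$ trivially AC), or supplement the Sobolev bound with a Talenti-type pointwise estimate for $-(u^*)'(\tau)$ derived from the coarea formula and the isoperimetric inequality; either route promotes the pointwise bound to the desired absolute continuity statement.
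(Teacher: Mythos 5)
Your first-stage argument for plain continuity of $u^*$ is sound (modulo working with $|u|$ rather than $u$, since $\mu_u$ is defined via $|u|$). But for the main claim --- absolute continuity --- you have correctly located the obstacle and then not surmounted it, and neither of your two sketched escape routes actually closes the gap as stated.

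The trouble with the Sobolev-inequality estimate you wrote down is exactly what you say: it is quadratic in the increment $u^*(\tau_1)-u^*(\tau_2)$, so summing over a disjoint family gives only an $\ell^2$ control, while absolute continuity needs $\ell^1$. Your route~(a) does not repair this: first, $\mu_u$ is in general \emph{not} continuous for $u\in Y_0^{1,2}$ (it jumps wherever $u$ has a level set of positive measure, which Sobolev regularity does not exclude), so ``absolute continuity of $\mu_u$ on its continuity intervals'' is already a delicate notion; second, and more seriously, even if $\mu_u$ were strictly decreasing and absolutely continuous, this would \emph{not} imply that $u^*=\mu_u^{-1}$ is absolutely continuous --- the inverse of a strictly monotone AC function can fail the Luzin~N property whenever the derivative vanishes on a fat Cantor set, and nothing in the ACL structure of $u$ rules this out for $\mu_u$. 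Your route~(b) is the right idea in spirit: replacing the Sobolev inequality by the coarea formula together with the isoperimetric inequality gives, for $t_1=u^*(\tau_2)<t_2=u^*(\tau_1)$,
\[
\bigl(u^*(\tau_1)-u^*(\tau_2)\bigr)\,a^{\frac{n-1}{n}}\ \lesssim_n\ \int_{\{t_1<|u|<t_2\}}|\nabla u|,
\]
and since the sets $\{t_1^{(j)}<|u|<t_2^{(j)}\}$ associated to disjoint $\tau$-intervals are disjoint, have total measure at most $\sum_j|I_j|$, and $|\nabla u|$ is integrable on sets of finite measure (Cauchy--Schwarz from $|\nabla u|\in L^2$), this linear bound \emph{would} propagate to absolute continuity. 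But you have not derived this estimate, and it is not free: it requires the Fleming--Rishel coarea formula for $BV/W^{1,1}_{\loc}$ functions, and one must be careful that the perimeter of $\{|u|>t\}$ detected by the coarea formula inside $\{t_1<|u|<t_2\}$ controls the full perimeter needed for the isoperimetric inequality. These are exactly the subtleties the Brothers--Ziemer paper was written to settle, and they cannot be waved away. In the paper's own organization the Talenti estimate is derived \emph{after} this lemma (it uses AC of $u^*$ as an ingredient), so invoking a ``Talenti-type pointwise estimate'' here risks circularity unless you re-derive it from scratch at the $BV$ level.

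The paper takes a genuinely different and cleaner route: extend $u$ by zero to $\mathbb{R}^n$, pass to the radially symmetric decreasing rearrangement $u^\sharp$ on $\mathbb{R}^n$ (the $u^*$ of Brothers--Ziemer), observe that a P\'olya--Szeg\H{o}-type argument places $u^\sharp\in Y^{1,2}(\mathbb{R}^n)\subset W^{1,2}_{\loc}(\mathbb{R}^n)$, and then quote Brothers--Ziemer's Proposition~2.5/Corollary~2.6 that the radial profile of a radial $W^{1,p}_{\loc}$ function is locally absolutely continuous on $(0,\infty)$; the one-dimensional decreasing rearrangement $u^*(\tau)$ is then obtained from that profile by the smooth change of variable $\tau\mapsto(\tau/\omega_n)^{1/n}$, which preserves local absolute continuity. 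That approach buys a complete proof by outsourcing the measure-theoretic work to a carefully vetted reference; yours, if completed via coarea, would be more self-contained but would essentially have to reprove a chunk of Brothers--Ziemer.
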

\begin{proof}
Extending $u$ by $0$ outside $\Omega$, we may assume that $u\in Y^{1,2}(\bR^n)$. 
	
Consider the function $u^*$ defined in \cite[(2), page 153]{BrothersZiemer} (this $u^*$ is not the same as the one in \eqref{eq:DecrRearr}!), and the function $\tilde{u}(|x|)=u^*(x)$ (as in \cite[page 154]{BrothersZiemer}). Then, from the argument for the proof of \cite[Lemma 2.6]{SakAPDE}, it is enough to show that $\tilde{u}$ is locally absolutely continuous in $(0,\infty)$.
	
To show this, note that the proof of \cite[Lemma 2.4]{BrothersZiemer} shows that $u^*\in Y^{1,2}(\bR^n)$ whenever $u\in Y^{1,2}(\bR^n)$ (since $Y^{1,2}(\bR^n)$ is reflexive, bounded sequences have subsequences that converge weakly, and the rest of the argument runs unchanged). Hence, $u^*\in W^{1,2}_{\loc}(\bR^n)$, and combining with \cite[Proposition 2.5]{BrothersZiemer}, we obtain that $\tilde{u}$ is locally absolutely continuous in $(0,\infty)$, as in \cite[Corollary 2.6]{BrothersZiemer}, which completes the proof.
\end{proof}

We now turn to the following decomposition, which in similar to \cite[Lemma 2.8]{SakAPDE}. This will be useful in a change of variables that we will perform in Lemma~\ref{Talenti}, as well as in the proof of the estimate in Lemma~\ref{MainEstimate}.

\begin{lemma}\label{Splitting}
Let $\Omega\subseteq\bR^n$ be a domain, and let $u\in Y_0^{1,2}(\Omega)$. Then we can write
\[
(0,\infty)=G_u\cup D_u\cup N_u,
\]
where the union is disjoint, such that the following hold.
\begin{enumerate}[i)]
\item If $x\in G_u$, then $u^*$ is differentiable at $x$, $\mu_u$ is differentiable at $u^*(x)$, and $(u^*)'(x)\neq 0$. Moreover,
\begin{equation}\label{eq:xGuFormulas}
\mu_u(u^*(x))=x\quad\text{and}\quad\mu_u'(u^*(x))=\frac{1}{u^*(x)},\quad\text{for all}\quad x\in G_u.
\end{equation}
\item If $x\in D_u$, then $u^*$ is differentiable at $x$, with $(u^*)'(x)=0$.
\item $N_u$ is a null set.
\end{enumerate}
\end{lemma}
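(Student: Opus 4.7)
My plan is to set $N_1 = \{x > 0 : u^*\text{ is not differentiable at }x\}$ and $N_2 = \{t > 0 : \mu_u\text{ is not differentiable at }t\}$, both of which are null: the first by Lemma~\ref{uAbsCts}, and the second because $\mu_u$ is a monotone (non-increasing) function. I then define $D_u := \{x \notin N_1 : (u^*)'(x) = 0\}$ and, on the remaining set $E := \{x \notin N_1 : (u^*)'(x) \neq 0\}$, cut out the bad $x$ where either $\mu_u$ fails to be differentiable at $u^*(x)$ or $\mu_u(u^*(x)) \neq x$; the remainder is $G_u$, and $N_u$ collects $N_1$ together with the bad points in $E$.

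Two things need to be verified. First, for every $x \in E$ one has $\mu_u(u^*(x)) = x$: from the defining infimum for $u^*$ in \eqref{eq:DecrRearr} and the right-continuity of $\mu_u$ we always have $\mu_u(u^*(x)) \leq x \leq \mu_u(u^*(x)^-)$, and a strict jump of $\mu_u$ at $u^*(x)$ would force $\lvert\{\lvert u\rvert = u^*(x)\}\rvert > 0$, making $u^*$ constant on a nondegenerate interval around $x$ and contradicting $(u^*)'(x) \neq 0$. Second, $\mu_u$ is differentiable at $u^*(x)$ for a.e.\ $x \in E$; for this I invoke the change-of-variables / Banach-indicatrix formula for the absolutely continuous $u^*$: for any bounded interval $I \subset (0,\infty)$,
\[
\int_I \mathbf{1}_{N_2}(u^*(x)) \, |(u^*)'(x)| \, dx \;=\; \int_{N_2} \#\{x \in I : u^*(x) = y\}\, dy \;=\; 0,
\]
so $\mathbf{1}_{N_2}(u^*(x)) \, |(u^*)'(x)| = 0$ a.e.\ on $I$, and since $(u^*)'(x) \neq 0$ on $E$, we conclude $u^*(x) \notin N_2$ for a.e.\ $x \in E \cap I$. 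Combining the two claims, $G_u$ has full measure in $E$, so $N_u$ is null.

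Finally, to derive the reciprocal identity $\mu_u'(u^*(x))\,(u^*)'(x) = 1$ on $G_u$, I shrink $G_u$ further to its density points (losing only a null set, which is absorbed into $N_u$). At a density point $x$ of $G_u$, there is a sequence $h_n \to 0$ with $x + h_n \in G_u$, so $\mu_u(u^*(x+h_n)) - \mu_u(u^*(x)) = h_n$; expanding with the differentiability of $\mu_u$ at $u^*(x)$ and of $u^*$ at $x$, dividing by $h_n$, and letting $n \to \infty$ yields the inverse-function relation. The main obstacle I anticipate is the second step above, namely passing from a.e.\ differentiability of $\mu_u$ in the $t$-variable to differentiability at the specific values $u^*(x)$ indexed by the $x$-variable. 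This is precisely where local absolute continuity of $u^*$ from Lemma~\ref{uAbsCts}, rather than mere monotonicity or continuity, is indispensable: without it, the preimage $(u^*)^{-1}(N_2)$ could carry positive measure even on the set where $(u^*)' \neq 0$.
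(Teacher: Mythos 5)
Your proof is correct and complete. The paper itself supplies only a citation (to \cite[Lemma~2.8]{SakAPDE}, with Lemma~\ref{uAbsCts} supplying the continuity input), so there is no in-text argument to compare against, but your three ingredients --- the jump argument giving $\mu_u(u^*(x))=x$ on $E$, the Banach-indicatrix (area) formula for the locally absolutely continuous $u^*$ giving $u^*(x)\notin N_2$ for a.e.\ $x\in E$, and the density-point chain-rule argument --- are exactly the natural ones, and your closing observation that absolute continuity (not mere continuity or monotonicity) is what makes the middle step work is right on target. Note that this middle step could equally be carried out with the Serrin--Varberg theorem \cite{SerrinVarberg} (a function differentiable on a set which it maps into a null set has vanishing derivative a.e.\ on that set), which is precisely the tool the paper invokes in the proof of Lemma~\ref{Talenti} for the same purpose; the two are interchangeable here. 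Two small points worth flagging: the identity $\mu_u'(u^*(x))=1/u^*(x)$ in \eqref{eq:xGuFormulas} is a typo for $\mu_u'(u^*(x))=1/(u^*)'(x)$, which is what you in fact prove and what the computation in the proof of Lemma~\ref{Talenti} actually uses; and in your step (a), when $x=\mu_u(u^*(x)^-)$ the constancy interval $[\mu_u(u^*(x)),\mu_u(u^*(x)^-))$ has $x$ as its right endpoint rather than an interior point, so one should say that the left one-sided derivative vanishes and then invoke the assumed differentiability of $u^*$ at $x$ to conclude $(u^*)'(x)=0$.
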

\begin{proof}
The proof is the same as the proof of \cite[Lemma 2.8]{SakAPDE}, where we use continuity of $u^*$ shown in Lemma~\ref{uAbsCts}, instead of \cite[Lemma 2.6]{SakAPDE}.
\end{proof}

We now turn to the following lemma, which is based on \cite[Lemma 3.1]{CianchiMazyaSchrodinger}. As we mentioned in the introduction, the properties of the function $\Psi$ defined below will be crucial in the proof of Lemma~\ref{MainEstimate} and, using this lemma, we avoid the construction of pseudo-rearrangements (as in \cite[pages 11 and 12]{SakAPDE}.

\begin{lemma}\label{Talenti}
Let $\Omega\subseteq\bR^n$ be a domain and $u\in Y_0^{1,2}(\Omega)$ with $u\geq 0$. For any $f\in L^1(\Omega)$, the function
\[
R_{f,u}(\tau)=\int_{[u>u^*(\tau)]}|f|
\]
is absolutely continuous in $(0,\infty)$, and if $\Psi_{f,u}=R'_{f,u}\geq 0$ is its derivative, then for any $p>1$ and $q\geq 1$,
\begin{equation}\label{eq:PsiEst}
\|\Psi_{f,u}\|_{L^{p,q}(0,\infty)}\leq C_{p,q}\|f\|_{L^{p,q}(\Omega)}.
\end{equation}
Moreover, for almost every $\tau>0$,
\begin{equation}\label{eq:Talenti}
(-u^*)'(\tau)\leq C_n\tau^{\frac{1}{n}-1}\sqrt{\Psi_{|\nabla u|^2,u}(\tau)}.
\end{equation}
\end{lemma}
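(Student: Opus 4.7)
The plan is to address the three claims in sequence, making systematic use of Lemma~\ref{Splitting}.

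\textbf{Absolute continuity of $R_{f,u}$} (the main obstacle). For $0<\tau_1<\tau_2$, a direct set manipulation gives
\[
R_{f,u}(\tau_2)-R_{f,u}(\tau_1)=\int_{\{u^*(\tau_2)<u\leq u^*(\tau_1)\}}|f|,
\]
so $R_{f,u}$ is nondecreasing and the Lebesgue measure of the set above equals $\mu_u(u^*(\tau_2))-\mu_u(u^*(\tau_1))$. The subtle point is that flat pieces of $u^*$ (arising from values $t$ with $|\{u=t\}|>0$) can a priori create jumps in $\mu_u\circ u^*$. By Lemma~\ref{Splitting}, however, on $G_u$ one has $\mu_u(u^*(\tau))=\tau$, while on $D_u$ the function $u^*$ is locally constant and hence $R_{f,u}$ is too; the remainder $N_u$ is null. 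Off an at most countable set of degenerate $\tau$'s, this yields the 1-Lipschitz bound $\mu_u(u^*(\tau_2))-\mu_u(u^*(\tau_1))\leq \tau_2-\tau_1$, and combining with absolute continuity of $E\mapsto\int_E|f|$ delivers the standard $\varepsilon$-$\delta$ criterion for $R_{f,u}$ on each $[a,b]\subset(0,\infty)$.

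\textbf{Lorentz estimate \eqref{eq:PsiEst}.} Absolute continuity then produces $\Psi_{f,u}=R_{f,u}'\geq 0$ with $R_{f,u}(\tau)=\int_0^\tau\Psi_{f,u}$. Extending the displayed identity by covering any measurable $A\subset(0,\infty)$ with a countable family of disjoint open intervals, and applying Hardy-Littlewood's inequality $\int_E|f|\leq\int_0^{|E|}|f|^*$, one gets
\[
\int_A\Psi_{f,u}\leq\int_0^{|A|}|f|^*.
\]
Taking the supremum over $|A|=\tau$ converts this into $\int_0^\tau\Psi_{f,u}^*\leq\int_0^\tau|f|^*$, equivalently $\mathcal{M}_{\Psi_{f,u}}\leq\mathcal{M}_f$ on $(0,\infty)$. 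Since the average $\mathcal{M}_{\Psi_{f,u}}(\tau)$ dominates the decreasing function $\Psi_{f,u}^*(\tau)$ pointwise, this forces $\Psi_{f,u}^*\leq\mathcal{M}_f$, and \eqref{eq:PsiEst} follows from the maximal inequality \eqref{eq:maximalInequality} together with monotonicity of the $L^{p,q}$ seminorm under pointwise comparison of decreasing rearrangements.

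\textbf{Talenti-type inequality \eqref{eq:Talenti}.} At $\tau\in G_u$, the isoperimetric inequality applied to $\{u>u^*(\tau)\}$ (of Lebesgue measure $\tau$ by \eqref{eq:xGuFormulas}) gives
\[
\mathcal{H}^{n-1}(\{u=u^*(\tau)\})\geq n\omega_n^{1/n}\tau^{(n-1)/n},
\]
and by Cauchy-Schwarz the square of the left side is bounded by
\[
\bigg(\int_{\{u=u^*(\tau)\}}|\nabla u|\,d\mathcal{H}^{n-1}\bigg)\bigg(\int_{\{u=u^*(\tau)\}}\frac{1}{|\nabla u|}\,d\mathcal{H}^{n-1}\bigg).
\]
The coarea formula identifies the first factor with $-\tfrac{d}{dt}\big|_{t=u^*(\tau)}\int_{\{u>t\}}|\nabla u|^2$, which by the chain rule equals $\Psi_{|\nabla u|^2,u}(\tau)/(-(u^*)'(\tau))$, and the second with $-\mu_u'(u^*(\tau))=1/(-(u^*)'(\tau))$, obtained on differentiating the identity $\mu_u\circ u^*=\mathrm{id}$ from \eqref{eq:xGuFormulas}. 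Multiplying these two expressions and rearranging yields \eqref{eq:Talenti} with $C_n=(n\omega_n^{1/n})^{-1}$; on $D_u$ both sides are zero, so the inequality extends to almost every $\tau>0$.
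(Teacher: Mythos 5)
Your Lorentz estimate \eqref{eq:PsiEst} is handled correctly and by essentially the same mechanism the paper uses: deduce $\Psi_{f,u}^*\leq\mathcal{M}_f$ and then apply \eqref{eq:maximalInequality} (the paper outsources the intermediate step to \cite{CianchiMazyaSchrodinger}, where you spell it out). The absolute-continuity part, however, has a genuine gap. Two of your claims fail: $(u^*)'=0$ on $D_u$ does \emph{not} mean $u^*$ is locally constant there (zero derivative at a point is far weaker), so it does not follow that $R_{f,u}$ is locally constant on $D_u$; and the asserted 1-Lipschitz bound $\mu_u(u^*(\tau_2))-\mu_u(u^*(\tau_1))\leq\tau_2-\tau_1$ is false near the right endpoint of any flat stretch of $u^*$. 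Concretely, if $|\{u=s\}|>0$ for some $s>0$, then $u^*\equiv s$ on an interval $[\tau_0,\tau_1]$, so $\mu_u(u^*(\tau))=\mu_u(s)\leq\tau_0$ there, and then $\mu_u\circ u^*$ jumps back up to (roughly) $\tau$ just past $\tau_1$; increments of $\mu_u\circ u^*$ across $\tau_1$ then dwarf $\tau_2-\tau_1$. Excluding a countable exceptional set does not cure this, because the failure occurs on intervals straddling $\tau_1$, not at isolated points. The paper avoids this route entirely: it proves $u^*=u^\circ$ using continuity from Lemma~\ref{uAbsCts} and then defers to the proof of \cite[Lemma 3.1]{CianchiMazyaSchrodinger}, which is not the argument you give.

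For \eqref{eq:Talenti}, your isoperimetric-plus-coarea manipulation is in substance a re-derivation of \cite[estimate (40)]{TalentiElliptic}, which the paper simply cites. What you omit is precisely the delicate part of the paper's proof. The Talenti inequality $C_n\leq\mu_u(t)^{\frac{2}{n}-2}(-\mu_u'(t))(-T_u)'(t)$ holds only for $t$ in a full-measure subset $F\subseteq(0,\sup_\Omega u)$, and it is not automatic that $u^*(\tau)\in F$ for a.e.\ $\tau\in G_u$: a priori, $u^*$ could compress a positive-measure set of $\tau$'s onto a null set of $t$-values. The paper closes this gap using \cite[Theorem 1]{SerrinVarberg} together with the fact that $(u^*)'\neq 0$ on $G_u$ (so such compression would force $(u^*)'=0$ there, a contradiction). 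Relatedly, your computation applies the chain rule to $T_u\circ u^*$ and differentiates $\mu_u\circ u^*=\mathrm{id}$ as though $T_u$ and $\mu_u$ were known to be differentiable at $u^*(\tau)$ for the relevant $\tau$; that differentiability is exactly what needs to be, and in the paper is, justified before these identities can be multiplied together.
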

\begin{proof}
Let $u^{\circ}$ be the function defined in \cite[page 660]{CianchiMazyaSchrodinger}; that is, we define
\[
u^{\circ}(\tau)=\sup\{t':\mu_u(t')\geq \tau\},
\]
where $\mu_u$ coincides with our definition of the distribution function \eqref{eq:distrFun}, since $u\geq 0$. We will show that $u^*=u^{\circ}$, so that $R_{f,u}$ coincides with the function in \cite[Lemma 3.1]{CianchiMazyaSchrodinger}. Then, the proof of the same lemma (where for absolute continuity of $u^*$, we will use Lemma~\ref{uAbsCts}) will show absolute continuity of $R_{f,u}$, and \eqref{eq:PsiEst} will follow from \cite[(3.12), page 661]{CianchiMazyaSchrodinger} and \eqref{eq:maximalInequality} .

Note first that, from the definitions, $u^*(\tau)\leq u^{\circ}(\tau)$ for all $\tau$. If now $u^*(\tau)<u^{\circ}(\tau)$, then we can find $t<t'$ with $\mu_u(t)\leq\tau$ and $\mu_u(t')\geq \tau$. Since $\mu_u$ is decreasing, this will imply that $\mu_u(t')\leq \mu_u(t)$, hence $\mu_u$ is equal to $\tau$ in $[t,t']$, which is a contradiction with continuity of $u^*$ from Lemma~\ref{uAbsCts}. This shows that $u^{\circ}=u^*$, and completes the proof of the first part.
	
To show estimate \eqref{eq:Talenti}, set $T_u(t)=\displaystyle\int_{[u>t]}|\nabla u|^2$, and note that, from \cite[estimate (40)]{TalentiElliptic},
\begin{equation}\label{eq:FromTalenti}
C_n\leq \mu_u(t)^{\frac{2}{n}-2}(-\mu_u'(t))\left(-\frac{d}{dt}\int_{[u>t]}|\nabla u|^2\right)=C_n\mu_u(t)^{\frac{2}{n}-2}(-\mu_u'(t))(-T_u)'(t),
\end{equation}
for every $t\in F$, where $F\subseteq(0,\sup_{\Omega}u)$ has full measure (this estimate is shown for $u\in W_0^{1,2}(\Omega)$, but the same proof as in \cite[pages 711-712]{TalentiElliptic} gives the result for $u\in Y_0^{1,2}(\Omega)$).

Consider now the splitting $(0,\infty)=G_u\cup D_u\cup N_u$ in Lemma~\ref{Splitting}. We claim that $u^*(\tau)\in F$ for almost every $\tau\in G_u$: if this is not the case, then there exists $G\subseteq G_u$, with positive measure, such that if $\tau\in G$, then $u^*(\tau)\notin F$. Then, the set $u^*(G)$ has measure zero and $u^*$ is differentiable at every point $\tau\in G$, hence \cite[Theorem 1]{SerrinVarberg} shows that $(u^*)'(\tau)=0$ for almost every $\tau\in G$. However, $u^*(\tau)\neq 0$ for every $\tau\in G_u$ from Lemma~\ref{Splitting}, which is a contradiction with the fact that $G$ has positive measure. So, $u^*(\tau)\in F$ for almost every $\tau\in G_u$, and for those $\tau$, plugging $u^*(\tau)$ in \eqref{eq:FromTalenti}, we obtain that
\[
C_n\leq \mu_u(u^*(\tau))^{\frac{2}{n}-2}(-\mu_u'(u^*(\tau))(-T_u)'(u^*(\tau)),
\]
and using \eqref{eq:xGuFormulas}, we obtain that
\[
(-u^*)'(\tau)\leq C_n\tau^{\frac{2}{n}-2}(-T_u)'(u^*(\tau)),
\]
for almost every $\tau\in G$. Moreover, $R_{|\nabla u|^2,u}=T_u\circ u^*$, and since $T_u$ is differentiable at $u^*(\tau)$ for almost every $\tau\in G_u$, multiplying the last estimate with $(-u^*)'(\tau)$ implies that
\[
\left((-u^*)'(\tau)\right)^2\leq C_n\tau^{\frac{2}{n}-2}(-T_u)'(u^*(\tau))\cdot (-u^*)'(\tau)=C_n\tau^{\frac{2}{n}-2}R_{|\nabla u|^2,u}'(\tau),
\]
which shows that \eqref{eq:Talenti} holds for almost every $\tau\in G_u$. On the other hand, $(u^*)'(\tau)=0$ when $\tau\in D_u$, so \eqref{eq:Talenti} also holds for almost every $\tau\in D_u$. Since $N_u$ has measure zero, \eqref{eq:Talenti} holds almost everywhere in $(0,\infty)$, which completes the proof.
\end{proof}

Finally, the following is a Gr{\"o}nwall type lemma, which we prove in the setting that will appear in Lemma~\ref{MainEstimate}. The reason for this is that the function $g_2g_3$ will not necessarily be integrable close to $0$, which turns out to be inconsequential.

\begin{lemma}\label{Gronwall}
Let $M>0$, and suppose that $f,g_1,g_2,g_3$ are functions defined in $(0,M)$, with $g_2,g_3\geq 0$. Assume that $g_2g_3$ is locally integrable in $(0,M)$, $g_3f\in L^1(0,M)$ and
\[
\exp\left(-\int_{\tau_0}^{\tau}g_2g_3\right)g_1(\tau)g_3(\tau)\in L^1(0,M),\qquad \exp\left(\int_{\e}^{\tau_0}g_2g_3\right)\int_0^{\e}g_3f\xrightarrow[\e\to 0]{}0,
\]
for some $\tau_0\in(0,M)$. If $\displaystyle f(\tau)\leq g_1(\tau)+g_2(\tau)\int_0^{\tau}g_3f$ in $(0,M)$, then, for every $\tau\in(0,M)$,
\[
f(\tau)\leq g_1(\tau)+g_2(\tau)\int_0^{\tau}g_1(\sigma)g_3(\sigma)\exp\left(\int_{\sigma}^{\tau}g_2(\rho)g_3(\rho)\,d\rho\right)\,d\sigma.
\]
\end{lemma}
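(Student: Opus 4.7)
The plan is a classical integrating-factor argument, adapted to handle the possible singularity of $g_2 g_3$ at $0$.

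First I would set $F(\tau) = \int_0^{\tau} g_3 f$, which is well-defined and absolutely continuous on $[0, M)$ because $g_3 f \in L^1(0, M)$. Then $F'(\tau) = g_3(\tau) f(\tau)$ for almost every $\tau$. Multiplying the hypothesis $f \le g_1 + g_2 F$ by $g_3 \ge 0$ gives the a.e.\ differential inequality
\[
F'(\tau) \;\le\; g_1(\tau) g_3(\tau) + g_2(\tau) g_3(\tau) F(\tau).
\]

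Next I would introduce the integrating factor $E(\tau) = \exp\!\bigl(-\!\int_{\tau_0}^{\tau} g_2 g_3\bigr)$. Since $g_2 g_3$ is locally integrable on $(0, M)$, the function $\int_{\tau_0}^{\cdot} g_2 g_3$ is locally absolutely continuous on $(0, M)$, and therefore so is $E$, with $E'(\tau) = -g_2(\tau) g_3(\tau) E(\tau)$ a.e. The product $E F$ is then locally absolutely continuous, and the a.e.\ identity $(EF)' = E F' + E' F = E(F' - g_2 g_3 F)$ combined with the previous display yields
\[
\frac{d}{d\tau}\bigl(E(\tau) F(\tau)\bigr) \;\le\; E(\tau) g_1(\tau) g_3(\tau) \qquad \text{for a.e. } \tau\in(0,M).
\]
By hypothesis the right-hand side is in $L^1(0, M)$, so integrating from $\e$ to $\tau$ for fixed $\tau\in(0,M)$ gives
\[
E(\tau) F(\tau) - E(\e) F(\e) \;\le\; \int_{\e}^{\tau} E(\sigma) g_1(\sigma) g_3(\sigma)\, d\sigma.
\]

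Now I would send $\e \to 0$. The right-hand side converges to $\int_0^{\tau} E(\sigma) g_1(\sigma) g_3(\sigma)\, d\sigma$ by dominated convergence (integrability again provided by the hypothesis). On the left, $E(\e) F(\e) = \exp\bigl(\int_{\e}^{\tau_0} g_2 g_3\bigr)\int_0^{\e} g_3 f$ tends to $0$ by the second assumption; this is the one nontrivial step, and it is precisely the hypothesis designed to compensate for a possible non-integrability of $g_2 g_3$ near $0$. Dividing through by $E(\tau) > 0$ and noting $E(\sigma)/E(\tau) = \exp\!\bigl(\int_{\sigma}^{\tau} g_2 g_3\bigr)$ gives
\[
F(\tau) \;\le\; \int_0^{\tau} g_1(\sigma) g_3(\sigma) \exp\!\Bigl(\int_{\sigma}^{\tau} g_2(\rho) g_3(\rho)\, d\rho\Bigr) d\sigma.
\]
Substituting this bound for $F(\tau) = \int_0^{\tau} g_3 f$ into the original assumption $f(\tau) \le g_1(\tau) + g_2(\tau) F(\tau)$ yields the desired inequality.

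The main obstacle is not the computation but ensuring that the boundary term $E(\e) F(\e)$ drops out as $\e\to 0$: $g_2 g_3$ need not be integrable near $0$, so $E(\e)$ may blow up, and this is exactly why the second limit hypothesis is imposed. All the other steps amount to standard absolute-continuity bookkeeping.
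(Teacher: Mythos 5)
Your argument is correct and matches the paper's proof essentially line for line: the paper sets $G=\int_0^\tau g_3 f$ and $H=\int_{\tau_0}^\tau g_2 g_3$, notes $(e^{-H}G)'\leq e^{-H}g_1 g_3$, integrates on $(\e,\tau)$, and lets $\e\to 0$ using the same two hypotheses to kill the boundary term and pass to the limit on the right-hand side. Your $F$ and $E$ are precisely the paper's $G$ and $e^{-H}$, so there is no substantive difference.
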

\begin{proof}
Define $G(\tau)=\int_0^{\tau}g_3f$ and $H(\tau)=\int_{\tau_0}^{\tau}g_2g_3$, then $G$ is absolutely continuous in $[0,M]$ and $H$ is locally absolutely continuous in $(0,M)$. Then, we have that $(e^{-H}G)'=e^{-H}(G'-H'G)\leq e^{-H}g_1g_3$, and since $e^{-H}G$ is absolutely continuous in $(\e,\tau)$ for $0<\e<\tau<M$, we integrate to obtain
\[
e^{-H(\tau)}G(\tau)-e^{-H(\e)}G(\e)\leq\int_{\e}^{\tau}e^{-H}g_1g_3.
\]
The proof is complete after letting $\e\to 0$ and plugging the last estimate in the original estimate for $f$.
\end{proof}

\section{Global estimates}\label{secGlobal}

\subsection{The main estimate}

The following lemma is the main estimate that will lead to global boundedness for subsolutions. The test function we use comes from \cite[page 663, proof of Theorem 2.1]{CianchiMazyaSchrodinger} and it is a slight modification of test functions that have been used in the literature before (see, for example, the references for the decreasing rearrangements technique in the introduction).

\begin{lemma}\label{MainEstimate}
Let $\Omega\subseteq\bR^n$ be a domain. Let $A$ be uniformly elliptic and bounded in $\Omega$, with ellipticity $\lambda$. Let also $b,f\in L^{n,1}(\Omega)$ and $g\in L^{\frac{n}{2},1}(\Omega)$. There exists $\nu=\nu_{n,\lambda}$ such that, if $c=c_1+c_2\in L^{n,\infty}(\Omega)$ with $c_1\in L^{n,q}(\Omega)$ for some $q<\infty$ and $\|c_2\|_{n,\infty}<\nu$, then for any subsolution $u\in Y_0^{1,2}(\Omega)$ to
\[
-\dive(A\nabla u+bu)+c\nabla u\leq -\dive f+g
\]
in $\Omega$, and any $\tau\in(0,1)$,
\begin{equation}
\begin{split}\label{eq:Main}
-v'(\tau)\leq C_1\tau^{\frac{1}{n}-1}\sqrt{\Psi_{|f|^2}(\tau)}+C_1\tau^{\frac{2}{n}-1}\mathcal{M}_g(\tau)+C_1e^{C_2\|c_1\|_{n,q}^q}\tau^{\frac{1}{n}-\frac{3}{2}}\int_0^{\tau}\sigma^{\frac{1}{n}-\frac{1}{2}}\sqrt{\Psi_{|f|^2}(\sigma)\Psi_{|c|^2}(\sigma)}\,d\sigma\\
+C_1e^{C_2\|c_1\|_{n,q}^q}\tau^{\frac{1}{n}-\frac{3}{2}}\int_0^{\tau}\sigma^{\frac{2}{n}-\frac{1}{2}}\mathcal{M}_{g}(\sigma)\sqrt{\Psi_{|c|^2}(\sigma)}\,d\sigma\\
+C_1v(\tau)\tau^{\frac{1}{n}-1}\sqrt{\Psi_{|b|^2}(\tau)}+C_1e^{C_2\|c_1\|_{n,q}^q}\tau^{\frac{1}{n}-\frac{3}{2}}\int_0^{\tau}\sigma^{\frac{1}{n}-\frac{1}{2}}v(\sigma)\sqrt{\Psi_{|b|^2}(\sigma)\Psi_{|c|^2}(\sigma)}\sigma^{\frac{1}{n}-\frac{1}{2}}\,d\sigma,
\end{split}
\end{equation}
where $C_1$ depends on $n,\lambda$, $C_2$ depends on $n,\lambda,q$, and where $v=(u^+)^*$ is the decreasing rearrangement of $u^+$, $\mathcal{M}_g$ is as in \eqref{eq:MaximalFunction}, and $\Psi_{|b|^2}=\Psi_{|b|^2,u^+},\Psi_{|c|^2}=\Psi_{|c|^2,u^+}, \Psi_{|f|^2}=\Psi_{|f|^2,u^+}$ are defined in Lemma~\ref{Talenti}.
\end{lemma}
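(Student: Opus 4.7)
The plan is to carry out a Talenti-style symmetrization argument in the spirit of \cite{CianchiMazyaSchrodinger}, closed at the end by the Gr\"onwall inequality of Lemma~\ref{Gronwall}. Fix $\tau\in(0,1)$, set $t=v(\tau)$, and test the subsolution inequality \eqref{eq:subsolDfn} against the nonnegative Lipschitz function
\[
\phi_{t,h}(x)=\min\bigl\{(u(x)-t)^+,\,h\bigr\},\qquad h>0,
\]
(which belongs to $Y_0^{1,2}(\Omega)$ and is approximable by $C_c^\infty$), dividing by $h$ and sending $h\to 0^+$. For $\tau$ in the set $G_u$ of Lemma~\ref{Splitting}, $\mu_u$ is differentiable at $v(\tau)$, and the principal term yields the derivative $-\tfrac{d}{dt}\int_{\{u>t\}}A\nabla u\cdot\nabla u$; together with ellipticity and the formula \eqref{eq:xGuFormulas} this bounds $\lambda\Psi_{|\nabla u|^2,u^+}(\tau)$ from above by the contributions of the remaining terms, divided by $-v'(\tau)$.

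I would then estimate the remaining contributions one at a time using Cauchy--Schwarz on $\{u>t\}$ together with the bound \eqref{eq:PsiEst} and the maximal inequality \eqref{eq:maximalInequality}. The divergence-form terms $f$ and $bu$ are ``boundary'' contributions localized at the single level $t$, producing after Cauchy--Schwarz the bounds $\sqrt{\Psi_{|f|^2,u^+}(\tau)\,\Psi_{|\nabla u|^2,u^+}(\tau)}$ and $v(\tau)\sqrt{\Psi_{|b|^2,u^+}(\tau)\,\Psi_{|\nabla u|^2,u^+}(\tau)}$; the term $g\phi_{t,h}$ yields $\tau\mathcal{M}_g(\tau)$ after rearrangement. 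The drift $c\nabla u$, however, is a ``bulk'' contribution: since $\phi_{t,h}$ does not vanish above the level $t+h$, after the $h\to 0$ limit this term remains as an integral over the whole superlevel set, which when rewritten in the $\tau$-variable produces integrals of the form $\int_0^\tau \sqrt{\Psi_{|c|^2,u^+}(\sigma)}\,[\cdots]\,d\sigma$ coupling $\sqrt{\Psi_{|c|^2,u^+}}$ to each of $\sqrt{\Psi_{|f|^2,u^+}}$, $\mathcal{M}_g$, $v\sqrt{\Psi_{|b|^2,u^+}}$, and $\sqrt{\Psi_{|\nabla u|^2,u^+}}$. Applying Talenti's inequality \eqref{eq:Talenti} to eliminate $\sqrt{\Psi_{|\nabla u|^2,u^+}}$ in favor of $\tau^{1-1/n}(-v'(\tau))$, and similarly inside the $\sigma$-integral, produces after a Young-type absorption an integral inequality of Gr\"onwall form
\[
-v'(\tau)\leq g_1(\tau)+g_2(\tau)\int_0^\tau g_3(\sigma)(-v'(\sigma))\,d\sigma,
\]
where $g_1$ collects all explicit terms on the RHS of \eqref{eq:Main} \emph{without} the exponential factor, and $g_3$ encodes $\sqrt{\Psi_{|c|^2,u^+}}$ with the appropriate powers of $\sigma$.

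The final step is to close this iteration via Lemma~\ref{Gronwall}. Decomposing $c=c_1+c_2$ and using Lemma~\ref{NormDisjoint} with \eqref{eq:PsiEst}, the contribution of $c_2$ to the Gr\"onwall kernel $\int_\sigma^\tau g_2g_3$ is controlled by $C_{n,\lambda}\|c_2\|_{n,\infty}\ln(\tau/\sigma)$; choosing $\nu=\nu_{n,\lambda}$ small guarantees that the resulting power of $\tau/\sigma$ is integrable and that the corresponding factor may be absorbed into the LHS. For $c_1$, Lemma~\ref{NormWithE} gives $\int_\sigma^\tau g_2g_3\leq \e\ln(\tau/\sigma)+C\|c_1\|_{n,q}^q$ for arbitrary $\e>0$, whose exponential is $(\tau/\sigma)^{\e'}\exp(C\|c_1\|_{n,q}^q)$, again integrable in $\sigma\in(0,\tau)$. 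The integrability hypotheses of Lemma~\ref{Gronwall} near $\sigma=0$ are verified from $u\in Y_0^{1,2}\subseteq L^{2^*}$ and the Lorentz norm bounds just mentioned. Substituting back yields exactly \eqref{eq:Main}.

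The principal obstacle is the interplay between the smallness/integrability conditions on $c$ and the near-zero behaviour of the Gr\"onwall kernel: without the smallness of $\|c_2\|_{n,\infty}$ and the $L^{n,q}$ membership of $c_1$ with $q<\infty$, the kernel $g_2g_3$ would produce an exponential growing like a non-integrable power of $\tau/\sigma$ as $\sigma\to 0$, and the iteration would fail. The delicate matching between the logarithmic slack of Lemma~\ref{NormWithE}, the splitting $c=c_1+c_2$, and the choice of threshold $\nu$ is the technical heart of the argument.
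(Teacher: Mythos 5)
Your overall scaffolding (truncation test function, Cauchy--Schwarz reduction of the mixed terms, the Hardy--Littlewood step for $g$, the decomposition $c=c_1+c_2$ fed into Lemma~\ref{NormWithE}, the choice of $\nu$ to control the exponential kernel, and the closure by Lemma~\ref{Gronwall}) is the same strategy as the paper's. The test function $\phi_{t,h}=\min\{(u-t)^+,h\}$ stepped in $t$ is interchangeable, via the chain rule of Lemma~\ref{Splitting}, with the paper's function stepped in $\tau$ with cap $v(\tau)-v(\tau+h)$, so that part is fine.

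However, there is a real gap in the Gr\"onwall step. You propose to pass from the intermediate inequality to one of the form
\[
-v'(\tau)\leq g_1(\tau)+g_2(\tau)\int_0^\tau g_3(\sigma)\bigl(-v'(\sigma)\bigr)\,d\sigma
\]
by ``eliminating $\sqrt{\Psi_{|\nabla u|^2,u^+}}$ in favor of $\tau^{1-1/n}(-v'(\tau))$, and similarly inside the $\sigma$-integral.'' On the left this is legitimate: Talenti's inequality \eqref{eq:Talenti} reads $-v'(\tau)\leq C_n\tau^{\frac1n-1}\sqrt{\Psi_{|\nabla u|^2,u^+}(\tau)}$, so replacing $\sqrt{\Psi_{|\nabla u|^2,u^+}(\tau)}$ by $C_n^{-1}\tau^{1-\frac1n}(-v'(\tau))$ only weakens the left-hand side. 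But inside the coupling integral $\int_0^\tau\sqrt{\Psi_{|c|^2}(\sigma)}\sqrt{\Psi_{|\nabla u|^2,u^+}(\sigma)}\,d\sigma$, the same replacement goes the wrong way: \eqref{eq:Talenti} gives $\sqrt{\Psi_{|\nabla u|^2,u^+}(\sigma)}\geq C_n^{-1}\sigma^{1-\frac1n}(-v'(\sigma))$, a \emph{lower} bound, so substituting $(-v'(\sigma))$ for $\sqrt{\Psi_{|\nabla u|^2,u^+}(\sigma)}$ inside the integral shrinks the right-hand side and the resulting inequality is not implied by what you have. (Indeed $v$ can be locally constant, hence $-v'\equiv 0$, on sets where $\Psi_{|\nabla u|^2,u^+}>0$, so no reverse Talenti inequality is available.) The fix, as in the paper, is to run Gr\"onwall for $f=\sqrt{\Psi_{|\nabla u|^2,u^+}}$, with $g_1$ collecting $\sqrt{\Psi_{|f|^2}}$, $\tau^{1/n}\mathcal{M}_g$, and $v\sqrt{\Psi_{|b|^2}}$ (all independent of $\nabla u$), $g_2(\tau)=C\tau^{\frac1n-1}$, and $g_3=\sqrt{\Psi_{|c|^2}}$; after Lemma~\ref{Gronwall} and the bound on the exponential from Lemma~\ref{NormWithE} (giving the factor $e^{C_2\|c_1\|_{n,q}^q}(\tau/\sigma)^{\frac12-\frac1n}$), one applies \eqref{eq:Talenti} \emph{only once, at the very end} to convert the resulting bound on $\sqrt{\Psi_{|\nabla u|^2,u^+}(\tau)}$ into a bound on $-v'(\tau)$. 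This yields \eqref{eq:Main} with $v(\sigma)$, not $(-v'(\sigma))$, inside the $\sigma$-integral, which matches the stated formula. Note also that the phrasing that the ellipticity step bounds $\lambda\Psi_{|\nabla u|^2,u^+}(\tau)$ by the remaining terms ``divided by $-v'(\tau)$'' should be ``multiplied by'': in the $t$-stepped normalization one obtains $\lambda(-T_u)'(v(\tau))\leq[\cdots]$, and $\Psi_{|\nabla u|^2,u^+}(\tau)=(-T_u)'(v(\tau))\cdot(-v'(\tau))$ by Lemma~\ref{Splitting}.
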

\begin{proof}
Fix $\tau,h>0$, and consider the test function
\[
\psi=\left\{\begin{array}{l l} 0, & 0\leq u^+\leq v(\tau+h) \\ u-v(\tau+h), & v(\tau+h)< u^+\leq v(\tau) \\ v(\tau)-v(\tau+h) & u^+>v(\tau).\end{array}\right.
\]
Since $\psi\in W_0^{1,2}(\Omega)$ and $\psi\geq 0$, we can use it as a test function, and from ellipticity of $A$,
\begin{multline*}
\lambda\int_{[v(\tau+h)<u\leq v(\tau)]}|\nabla u|^2\leq v(\tau)\int_{[v(\tau+h)<u\leq v(\tau)]}|b\nabla u|+(v(\tau)-v(\tau+h))\int_{[u>v(\tau+h)]}|c\nabla u|\\
+\int_{[v(\tau+h)<u\leq v(\tau)]}|f\nabla u|+(v(\tau)-v(\tau+h))\int_{[u>v(\tau+h)]}|g|.
\end{multline*}
Letting $\Psi(\tau)=\Psi_{|\nabla u|^2,u^+}$ (as in Lemma~\ref{Talenti}), dividing by $h$, using the Cauchy-Schwartz inequality and letting $h\to 0$, we obtain that
\begin{multline}\label{eq:p0}
\Psi(\tau)\leq C_{\lambda}v(\tau)\sqrt{\Psi_{|b|^2}(\tau)}\sqrt{\Psi(\tau)}+C_{\lambda}(-v')(\tau)\int_{[u>v(\tau)]}|c\nabla u|\\
+C_{\lambda}\sqrt{\Psi_{|f|^2}(\tau)}\sqrt{\Psi(\tau)}+C_{\lambda}(-v'(\tau))\int_{[u>v(\tau)]}|g|,
\end{multline}
where we also used continuity of the functions $R_{|c\nabla u|,u^+}$ and $R_{|g|,u^+}$, from Lemma~\ref{Talenti}. Moreover, from absolute continuity of $R_{|c\nabla u|, u^+}$ and the Cauchy-Schwartz inequality, we obtain
\begin{equation}\label{eq:p1}
\int_{[u>v(\tau)]}|c\nabla u|=\int_0^{\tau}\Psi_{|c\nabla u|,u^+}\leq\int_0^{\tau}\sqrt{\Psi_{|c|^2}}\sqrt{\Psi}.
\end{equation}
Let now $\mu$ be the distribution function of $u^+$, and consider the decomposition $(0,\infty)=G_{u^+}\cup D_{u^+}\cup N_{u^+}$ from Lemma~\ref{Splitting}. Then, for $\tau\in G_{u^+}$, the Hardy-Littlewood inequality (see, for example, \cite[page 44, Theorem 2.2]{BennettSharpley}) and \eqref{eq:xGuFormulas} show that
\[
(-v'(\tau))\int_{[u>v(\tau)]}|g|\leq (-v'(\tau))\int_0^{\mu(v(\tau))}g^*=(-v'(\tau))\int_0^{\tau}g^*=\tau(-v'(\tau))\mathcal{M}_g(\tau).
\]
On the other hand, if $\tau\in N_{u^+}$, then $-v'(\tau)=0$, and since $N_{u^+}$ has measure $0$, the last estimate holds almost everywhere. Hence, plugging the last estimate and \eqref{eq:p1} in \eqref{eq:p0}, we obtain that
\begin{multline*}
\Psi(\tau)\leq C_{\lambda}v(\tau)\sqrt{\Psi_{|b|^2}(\tau)}\sqrt{\Psi(\tau)}+C_{\lambda}(-v')(\tau)\int_0^{\tau}\sqrt{\Psi_{|c|^2}}\sqrt{\Psi}\\
+C_{\lambda}\sqrt{\Psi_{|f|^2}(\tau)}\sqrt{\Psi(\tau)}+C_{\lambda}\tau(-v'(\tau))\mathcal{M}_g(\tau).
\end{multline*}
Let $\tau$ such that $\Psi(\tau)>0$. Then, dividing the last estimate by $\sqrt{\Psi(\tau)}$ and using \eqref{eq:Talenti},
\begin{align*}
\sqrt{\Psi(\tau)}&\leq C_{\lambda}v(\tau)\sqrt{\Psi_{|b|^2}(\tau)}+\frac{C_{\lambda}(-v')(\tau)}{\sqrt{\Psi(\tau)}}\int_0^{\tau}\sqrt{\Psi_{|c|^2}}\sqrt{\Psi}+C_{\lambda}\sqrt{\Psi_{|f|^2}(\tau)}+\frac{C_{\lambda}\tau(-v'(\tau))\mathcal{M}_g(\tau)}{\sqrt{\Psi(\tau)}}\\
&\leq C\sqrt{\Psi_{|f|^2}(\tau)}+C\tau^{\frac{1}{n}}\mathcal{M}_g(\tau)+Cv(\tau)\sqrt{\Psi_{|b|^2}(\tau)}+C\tau^{\frac{1}{n}-1}\int_0^{\tau}\sqrt{\Psi_{|c|^2}}\sqrt{\Psi},
\end{align*}
where $C=C_{n,\lambda}$. On the other hand, the last estimate holds also when $\Psi(\tau)=0$, hence it holds for almost every $\tau>0$.

Note now that, from subadditivity of $\Psi$ and Lemma~\ref{NormWithE} (since we can assume that $q>1$) for any $\e>0$,
\begin{align*}
\int_{\sigma}^{\tau}\rho^{\frac{1}{n}-1}\sqrt{\Psi_{|c|^2}(\rho)}\,d\rho&\leq \int_{\sigma}^{\tau}\rho^{\frac{1}{n}-1}\sqrt{\Psi_{|c_1|^2}(\rho)}\,d\rho+\int_{\sigma}^{\tau}\rho^{\frac{1}{n}-1}\sqrt{\Psi_{|c_2|^2}(\rho)}\,d\rho\\
&\leq \e\ln\frac{\tau}{\sigma}+C_{q,\e}\left\|\sqrt{\Psi_{|c_1|^2}}\right\|_{n,q}^q+\int_{\sigma}^{\tau}\rho^{\frac{1}{n}-1}\left\|\sqrt{\Psi_{|c_2|^2}}\right\|_{n,\infty}\rho^{-\frac{1}{n}}\,d\rho\\
&\leq \e\ln\frac{\tau}{\sigma}+C_{n,q,\e}\|c_1\|_{n,q}^q+C_n\|c_2\|_{n,\infty}\ln\frac{\tau}{\sigma}.
\end{align*}
We choose $\e=\e_{n,\lambda}$ and $\nu_{n,\lambda}$ such that $C\e_{n,\lambda}+CC_n\nu_{n,\lambda}\leq\frac{1}{2}-\frac{1}{n}$; then, we will have that
\begin{equation}\label{eq:g2g3}
\exp\left(C\int_{\sigma}^{\tau}\rho^{\frac{1}{n}-1}\sqrt{\Psi_{|c|^2}(\rho)}\,d\rho\right)\leq e^{C_2\|c_1\|_{n,q}^q}\left(\frac{\tau}{\sigma}\right)^{\frac{1}{2}-\frac{1}{n}},
\end{equation}
where $C_2$ depends on $n,q$ and $\lambda$. Then, using that $v\in L^{2^*}(0,\infty)$, \eqref{eq:g2g3} and Lemma~\ref{Talenti} , it is straightforward to check that the hypotheses of Gr{\"o}nwall's lemma (Lemma~\ref{Gronwall}) are satisfied, hence we obtain that
\begin{multline*}
\sqrt{\Psi(\tau)}\leq C\sqrt{\Psi_{|f|^2}(\tau)}+C\tau^{\frac{1}{n}}\mathcal{M}_g(\tau)+ Cv(\tau)\sqrt{\Psi_{|b|^2}(\tau)}\\+C\tau^{\frac{1}{n}-1}\int_0^{\tau}\sqrt{\Psi_{|f|^2}(\sigma)}\sqrt{\Psi_{|c|^2}(\sigma)}\exp\left(C\int_{\sigma}^{\tau}\rho^{\frac{1}{n}-1}\sqrt{\Psi_{|c|^2}(\rho)}\,d\rho\right)\,d\sigma\\
+C\tau^{\frac{1}{n}-1}\int_0^{\tau}\sigma^{\frac{1}{n}}\mathcal{M}_g(\sigma)\sqrt{\Psi_{|c|^2}(\sigma)}\exp\left(C\int_{\sigma}^{\tau}\rho^{\frac{1}{n}-1}\sqrt{\Psi_{|c|^2}(\rho)}\,d\rho\right)\,d\sigma\\
+C\tau^{\frac{1}{n}-1}\int_0^{\tau}v(\sigma)\sqrt{\Psi_{|b|^2}(\sigma)}\sqrt{\Psi_{|c|^2}(\sigma)}\exp\left(C\int_{\sigma}^{\tau}\rho^{\frac{1}{n}-1}\sqrt{\Psi_{|c|^2}(\rho)}\,d\rho\right)\,d\sigma,
\end{multline*}
where $C=C_{n,\lambda}$. Finally, using \eqref{eq:Talenti} to bound $\sqrt{\Psi}$ from below, and \eqref{eq:g2g3}, the proof is complete.
\end{proof}

\subsection{The maximum principle}

Using Lemma~\ref{MainEstimate}, we now show global boundedness of subsolutions.

\begin{prop}\label{GlobalUpperBound}
Let $\Omega\subseteq\bR^n$ be a domain. Let $A$ be uniformly elliptic and bounded in $\Omega$, with ellipticity $\lambda$. Let also $b,f\in L^{n,1}(\Omega)$, $d,g\in L^{\frac{n}{2},1}(\Omega)$, and suppose that $c=c_1+c_2\in L^{n,\infty}(\Omega)$ with $c_1\in L^{n,q}(\Omega)$ for some $q<\infty$ and $\|c_2\|_{n,\infty}<\nu$, where $\nu=\nu_{n,\lambda}$ appears in Lemma~\ref{MainEstimate}.

There exists $\tau_0\in(0,\infty)$, depending on $b,c_1,c_2$ and $d$ such that, for any subsolution $u\in Y^{1,2}(\Omega)$ to
\[
-\dive(A\nabla u+bu)+c\nabla u\leq -\dive f+g
\]
we have that
\begin{equation}\label{eq:forTau0}
\sup_{\Omega}u^+\leq C\sup_{\partial\Omega}u^++Cv(\tau_0)+C\|f\|_{n,1}+C\|g\|_{\frac{n}{2},1},
\end{equation}
where $C$ depends on $n,q,\lambda$, $\|b\|_{n,1}, \|c_1\|_{n,q}$ and $\|d\|_{\frac{n}{2},1}$. In particular, for any $p>0$,
\begin{equation}\label{eq:withTau}
\sup_{\Omega}u^+\leq C\sup_{\partial\Omega}u^++C\tau_0^{-1/p}\left(\int_{\Omega}|u^+|^p\right)^{\frac{1}{p}}+C\|f\|_{n,1}+C\|g\|_{\frac{n}{2},1}.
\end{equation}
\end{prop}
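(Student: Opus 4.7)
My plan has two preliminary reductions followed by a Gr\"onwall-type integration of the estimate \eqref{eq:Main}. First, I would remove the zero-order term using Lemma~\ref{Reduction}: picking $e\in L^{n,1}(\Omega)$ with $\dive e = d$ and $\|e\|_{n,1}\le C_n\|d\|_{\frac{n}{2},1}$, the identity $\int du\,\phi = -\int e\cdot\nabla u\,\phi - \int eu\cdot\nabla\phi$ recasts the subsolution inequality as
\[
-\dive(A\nabla u + (b-e)u) + (c-e)\nabla u \le -\dive f + g,
\]
with $b - e\in L^{n,1}$ and $c-e = (c_1 - e) + c_2$ still admitting a splitting of the same type (since $L^{n,1}\subseteq L^{n,q}$ for $q<\infty$). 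Second, to handle the boundary, I would set $s = \sup_{\partial\Omega}u^+$ and $w = (u-s)^+\in Y_0^{1,2}(\Omega)$. The truncation test function in the proof of Lemma~\ref{MainEstimate}, applied to $w$ with thresholds $s + v(\tau+h)$ and $s + v(\tau)$ where $v = w^*$, belongs to $W_0^{1,2}(\Omega)$ and is supported in $\{u > s\}$. Since $u = w + s$ and $\nabla u = \nabla w$ there, re-running the proof of Lemma~\ref{MainEstimate} verbatim with this test function and the transformed coefficients produces the estimate \eqref{eq:Main} for $v = w^*$, with $f$ replaced by $\tilde f = f - s(b-e)$ (the shift generates an extra source term $s\dive(b-e)$).

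Next, I would integrate the resulting \eqref{eq:Main} from $0$ to $\tau_0$, using Fubini for the nested terms, to obtain
\[
v(0^+) - v(\tau_0) \le A(\tau_0) + B(\tau_0)\,v(0^+),
\]
where $A(\tau_0) \le C\bigl(\|f\|_{n,1} + s\|b-e\|_{n,1} + \|g\|_{\frac{n}{2},1}\bigr)$ comes from the $\tilde f, g$ terms via Lemma~\ref{Talenti} and H\"older's inequality for Lorentz spaces, while $B(\tau_0)$ is a sum of integrals of the form $\int_0^{\tau_0}\tau^{\frac{1}{n}-1}\sqrt{\Psi_{|b-e|^2}(\tau)}\,d\tau$ and analogous nested expressions involving $\sqrt{\Psi_{|c-e|^2}}$. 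Using $\sqrt{\Psi_{|\cdot|^2}}\in L^{n,2}(0,\infty)$ from \eqref{eq:PsiEst} and Cauchy--Schwarz, each such integral is finite, and absolute continuity forces $B(\tau_0)\to 0$ as $\tau_0\to 0^+$. I would fix $\tau_0\in(0,\infty)$, depending on $\|b\|_{n,1},\|c_1\|_{n,q},\|c_2\|_{n,\infty},\|d\|_{\frac{n}{2},1}$, so that $B(\tau_0)\le 1/2$, and then absorb to get $v(0^+)\le 2A(\tau_0) + 2v(\tau_0)$. Since $\sup_\Omega u^+\le s + v(0^+)$, this yields \eqref{eq:forTau0}, and \eqref{eq:withTau} follows from the Chebyshev bound $v(\tau_0)\le\tau_0^{-1/p}\|w\|_{L^p(\Omega)}\le\tau_0^{-1/p}\|u^+\|_{L^p(\Omega)}$.

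The \textbf{main obstacle} lies in the integration step: the estimate \eqref{eq:Main} features several $v$-dependent terms, both pointwise (like $v(\tau)\tau^{\frac{1}{n}-1}\sqrt{\Psi_{|b|^2}(\tau)}$) and through nested integrals such as $\tau^{\frac{1}{n}-\frac{3}{2}}\int_0^\tau\sigma^{\frac{1}{n}-\frac{1}{2}}v(\sigma)\sqrt{\Psi_{|b|^2}(\sigma)\Psi_{|c|^2}(\sigma)}\,d\sigma$, and producing Lorentz-space bounds that are integrable on $(0,\tau_0)$ while simultaneously showing that all $b$- and $c$-coefficients vanish as $\tau_0\to 0^+$ requires careful use of H\"older's inequality in Lorentz spaces and interchange of integration. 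The dependence of $\tau_0$ on the norms of the coefficients themselves is essential here and reflects exactly the failure of ``good'' constants observed in the introduction.
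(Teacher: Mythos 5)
Your overall architecture matches the paper's: reduce to $d=0$ via Lemma~\ref{Reduction}, shift to a $Y_0^{1,2}$ function to eliminate the boundary term (with $f$ picking up a contribution of size $\sup_{\partial\Omega}u^+\cdot\|b-e\|_{n,1}$, which is where the constant in front of $\sup_{\partial\Omega}u^+$ originates), feed the result into Lemma~\ref{MainEstimate}, and then integrate the differential inequality for $v=(u^+)^*$. However, the integration step as you describe it has a genuine gap. You propose to integrate \eqref{eq:Main} over $(0,\tau_0)$, bound every $v$-dependent term on the right by $B(\tau_0)\,v(0^+)$, and absorb. But absorbing $B(\tau_0)\,v(0^+)$ into the left-hand side requires knowing $v(0^+)<\infty$ in advance, and $v(0^+)=\sup_\Omega u^+$ is precisely what we are trying to bound; if $v(0^+)=\infty$ the inequality reads $\infty\le\infty$ and is vacuous. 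Integrating instead from $\tau_1>0$ to $\tau_0$ does not fix this, because the nested terms in \eqref{eq:Main} involve $\int_0^\tau$, which, for $\tau>\tau_1$, still runs over $\sigma\in(0,\tau_1)$ where $v(\sigma)\ge v(\tau_1)$ and thus cannot be dominated by $v(\tau_1)$. This is precisely the obstruction that the paper's Gr\"onwall lemma (Lemma~\ref{Gronwall}) is designed to handle: the paper first multiplies by the integrating factor $e^{\int_0^\tau R}$ to remove the pointwise term $R v$, then integrates over $(\tau_1,\tau_0)$, chooses $\tau_0$ so that the part of the nested integral over $(\tau_1,\tau_0)$ (bounded by $v(\tau_1)\int_{\tau_1}^{\tau_0}G$) can be absorbed, and finally applies Lemma~\ref{Gronwall} to the resulting inequality in $v$ on $(0,\tau_0)$ — whose integrability hypothesis $g_3 v\in L^1(0,\tau_0)$ is satisfied because $v\in L^{2^*}(0,\infty)$ — to control the remaining nested integral over $(0,\tau_1)$. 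Only after this does one let $\tau_1\to 0^+$.

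There is also an error in the stated dependence of $\tau_0$. You write that $\tau_0$ can be chosen depending only on $\|b\|_{n,1},\|c_1\|_{n,q},\|c_2\|_{n,\infty},\|d\|_{n/2,1}$. But the quantities you need to make small, such as $\int_0^{\tau_0}\tau^{1/n-1}\sqrt{\Psi_{|b-e|^2}(\tau)}\,d\tau$ and $\int_0^{\tau_0}\sigma^{2/n-1}\sqrt{\Psi_{|b-e|^2}\Psi_{|c-e|^2}}\,d\sigma$, tend to $0$ as $\tau_0\to 0^+$ by absolute continuity of the integral, and the rate at which they vanish depends on the actual functions $b,c_1,c_2,d$ and not merely on their norms. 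This is why the proposition asserts that $\tau_0$ depends on $b,c_1,c_2,d$ themselves, and it is essential: if $\tau_0$ (and hence the constant $C\tau_0^{-1/p}$) could be taken to depend only on the norms, one would get a scale-invariant Moser estimate with ``good'' constants without any smallness assumption, contradicting Proposition~\ref{dShouldBeSmall} and Remark~\ref{bcShouldBeSmall}.
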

\begin{proof}
If $s=\sup_{\partial\Omega}u^+\in(0,\infty)$, then for every $s'>s$,
\[
-\dive(A\nabla(u-s')+b(u-s'))+c\nabla(u-s')+d(u-s')\leq -\dive(f-s'b)+g-s'd,
\]
and $(u-s')^+\in Y_0^{1,2}(\Omega)$; hence, we can assume that $s=0$, so $u^+\in Y_0^{1,2}(\Omega)$.

Consider the function $e$ from Lemma~\ref{Reduction} that solves the equation $\dive e=d$ in $\bR^n$. Then, if we define $b'=b-e$ and $c'=c-e$, $u$ is a subsolution to
\[
-\dive(A\nabla u+b'u)+c'\nabla u\leq -\dive f+g,
\]
Set $c_1'=c_1-e$, then $c'=c_1'+c_2$. Let $C_1,C_2$ be the constants in Lemma~\ref{MainEstimate}, and denote $C_1e^{C_2\|c_1'\|_{n,q}^q}$ by $C_0$. Moreover, set
\begin{equation}\label{eq:H}
\begin{split}
H(\tau)=C_1\tau^{\frac{1}{n}-1}\sqrt{\Psi_{|f|^2}(\tau)}+C_1\tau^{\frac{2}{n}-1}\mathcal{M}_g(\tau)+C_0\tau^{\frac{1}{n}-\frac{3}{2}}\int_0^{\tau}\sigma^{\frac{1}{n}-\frac{1}{2}}\sqrt{\Psi_{|f|^2}(\sigma)\Psi_{|c'|^2}(\sigma)}\,d\sigma\\
+C_0\tau^{\frac{1}{n}-\frac{3}{2}}\int_0^{\tau}\sigma^{\frac{2}{n}-\frac{1}{2}}\mathcal{M}_g(\sigma)\sqrt{\Psi_{|c'|^2}(\sigma)}\,d\sigma.
\end{split}
\end{equation}
From Lemma~\ref{Talenti}, we have that
\begin{equation}\label{eq:norms}
\left\|\sqrt{\Psi_{|f|^2}}\right\|_{n,1}\leq C_n\|f\|_{n,1},\qquad \left\|\sqrt{\Psi_{|c'|^2}}\right\|_{n,\infty}\leq C_n\|c'\|_{n,\infty}.
\end{equation}
Then, since $\frac{1}{n}-\frac{3}{2}<-1$, changing the order of integration and using \eqref{eq:Holder} and \eqref{eq:norms}, we have
\begin{align*}
\int_0^{\infty}\tau^{\frac{1}{n}-\frac{3}{2}}\int_0^{\tau}\sigma^{\frac{1}{n}-\frac{1}{2}}\sqrt{\Psi_{|f|^2}(\sigma)\Psi_{|c'|^2}(\sigma)}\,d\sigma\,d\tau&\leq C_n\int_0^{\infty}\tau^{\frac{2}{n}-1}\sqrt{\Psi_{|f|^2}(\sigma)\Psi_{|c'|^2}(\sigma)}\,d\sigma\\
&\leq C_n\|c'\|_{n,\infty}\|f\|_{n,1},
\end{align*}
and also, using \eqref{eq:maximalInequality}, we obtain that
\begin{align*}
\int_0^{\infty}\tau^{\frac{1}{n}-\frac{3}{2}}\int_0^{\tau}\sigma^{\frac{2}{n}-\frac{1}{2}}\mathcal{M}_{g}(\sigma)\sqrt{\Psi_{|c'|^2}(\sigma)}\,d\sigma&\leq C_n\int_0^{\infty}\sigma^{\frac{3}{n}-1}\mathcal{M}_{g}(\sigma)\sqrt{\Psi_{|c'|^2}(\sigma)}\,d\sigma\,d\tau\\
&\leq C_n\|c'\|_{n,\infty}\|g\|_{\frac{n}{2},1}.
\end{align*}
The last two estimates and the definition of $H$ in \eqref{eq:H} imply that
\begin{equation}\label{eq:HNorm}
\int_0^{\infty}H\leq C(\|c'\|_{n,\infty}+1)\left(\|f\|_{n,1}+\|g\|_{\frac{n}{2},1}\right),
\end{equation}
where $C$ depends on $n,q,\lambda$ and $\|c_1'\|_{n,q}$.

Set now
\[
R(\tau)=C_1\tau^{\frac{1}{n}-1}\sqrt{\Psi_{|b'|^2}(\tau)},\qquad G(\sigma)=C_1\sigma^{\frac{2}{n}-1}\sqrt{\Psi_{|b'|^2}(\sigma)\Psi_{|c'|^2}(\sigma)},
\]
Then, if $\|c_1'\|=\|c_1'\|_{n,q}$, \eqref{eq:Main} shows that
\[
-v'(\tau)\leq H(\tau)+R(\tau)v(\tau)+e^{C_2\|c'_1\|^q}\tau^{\frac{1}{n}-\frac{3}{2}}\int_0^{\tau}v(\sigma)\sigma^{\frac{1}{2}-\frac{1}{n}}G(\sigma)\,d\sigma,
\]
as long as $\|c_2\|_{n,\infty}<\nu_{n,\lambda}$. Since also $\int_0^{\infty}R\leq C_{n,\lambda}\|b'\|_{n,1}$ from Lemma~\ref{Talenti}, we obtain that
\begin{align}\label{eq:toIntegrate}
\begin{split}
-\left(e^{\int_0^{\tau}R}v\right)'&=e^{\int_0^{\tau}R}\left(-v'-Rv\right)\leq e^{\int_0^{\tau}R}\left(H(\tau)+e^{C_2\|c_1'\|^q}\tau^{\frac{1}{n}-\frac{3}{2}}\int_0^{\tau}v(\sigma)\sigma^{\frac{1}{2}-\frac{1}{n}}G(\sigma)\,d\sigma\right)\\
&\leq e^{C_{n,\lambda}\|b'\|_{n,1}}H(\tau)+ e^{C_{n,\lambda}\|b'\|_{n,1}+C_2\|c_1'\|^q}\tau^{\frac{1}{n}-\frac{3}{2}}\int_0^{\tau}v(\sigma)\sigma^{\frac{1}{2}-\frac{1}{n}}G(\sigma)\,d\sigma.
\end{split}
\end{align}
Set $B=\exp\left(C_{n,\lambda}\|b'\|_{n,1}\right)$ and $C'=\exp\left(C_{n,\lambda}\|b'\|_{n,1}+C_2\|c_1'\|^q\right)$, and let $\tau_2>\tau_1>0$. Then $v$ is absolutely continuous in $(\tau_1,\tau_2)$, from Lemma~\ref{uAbsCts}; hence, integrating \eqref{eq:toIntegrate} in $(\tau_1,\tau_2)$, we obtain that
\begin{align}\label{eq:exp}
\begin{split}
e^{\int_0^{\tau_1}R}v(\tau_1)&\leq e^{\int_0^{\tau_2}R}v(\tau_2)+B\int_{\tau_1}^{\tau_2}H+C'\int_{\tau_1}^{\tau_2}\int_0^{\tau}\tau^{\frac{1}{n}-\frac{3}{2}}v(\sigma)\sigma^{\frac{1}{2}-\frac{1}{n}}G(\sigma)\,d\sigma d\tau\\
&\leq B\left(v(\tau_2)+\|H\|_1\right)+C'\int_{\tau_1}^{\tau_2}\int_0^{\tau}\tau^{\frac{1}{n}-\frac{3}{2}}v(\sigma)\sigma^{\frac{1}{2}-\frac{1}{n}}G(\sigma)\,d\sigma d\tau.
\end{split}
\end{align}
Using Fubini's theorem, the last integral is equal to
\begin{multline*}
\int_0^{\tau_1}\int_{\tau_1}^{\tau_2}\tau^{\frac{1}{n}-\frac{3}{2}}\sigma^{\frac{1}{2}-\frac{1}{n}}G(\sigma)\,d\tau d\sigma+\int_{\tau_1}^{\tau_2}\int_{\sigma}^{\tau_2}\tau^{\frac{1}{n}-\frac{3}{2}}v(\sigma)\sigma^{\frac{1}{2}-\frac{1}{n}}G(\sigma)\,d\tau d\sigma\\
\leq C_n\tau_1^{\frac{1}{n}-\frac{1}{2}}\int_0^{\tau_1}v(\sigma)\sigma^{\frac{1}{2}-\frac{1}{n}}G(\sigma)\,d\sigma+C_n\int_{\tau_1}^{\tau_2}v(\sigma)G(\sigma)\,d\sigma,
\end{multline*}
therefore, plugging the last estimate in \eqref{eq:exp}, and using that $v$ is decreasing, we obtain 
\begin{equation}\label{eq:plug}
v(\tau_1)\leq B\left(v(\tau_2)+\|H\|_1\right)+C'C_n\tau_1^{\frac{1}{n}-\frac{1}{2}}\int_0^{\tau_1}v(\sigma)\sigma^{\frac{1}{2}-\frac{1}{n}}G(\sigma)\,d\sigma+C'C_nv(\tau_1)\int_{\tau_1}^{\tau_2}G(\sigma)\,d\sigma.
\end{equation}
Consider now $\tau_0>0$ such that
\begin{equation}\label{eq:Condition}
\int_0^{\tau_0}G\leq\frac{1}{2C'C_n};
\end{equation}
note that such $\tau_0$ always exists, since $G\in L^1(0,\infty)$ from Lemma~\ref{Talenti}. Then, if $0<\tau_1\leq\tau_0$, setting $\tau_2=\tau_0$ and plugging \eqref{eq:Condition} in \eqref{eq:plug} we obtain that
\[
v(\tau_1)\leq 2B\left(v(\tau_0)+\|H\|_1\right)+2C'C_n\tau_1^{\frac{1}{n}-\frac{1}{2}}\int_0^{\tau_1}v(\sigma)\sigma^{\frac{1}{2}-\frac{1}{n}}G(\sigma)\,d\sigma.
\]
Then, for $\tau_1\in(0,\tau_0)$, the hypotheses of Lemma~\ref{Gronwall} are satisfied, and we obtain that
\begin{align*}
v(\tau_1)&\leq 2B\left(v(\tau_0)+\|H\|_1\right)+2C'C_n\tau_1^{\frac{1}{n}-\frac{1}{2}}\int_0^{\tau_1}2B\left(v(\tau_0)+\|H\|_1\right)\sigma^{\frac{1}{2}-\frac{1}{n}}G(\sigma)e^{2C'C_n\int_{\sigma}^{\tau_1}G}\,d\sigma\\
&\leq 2B\left(v(\tau_0)+\|H\|_1\right)+4C'C_nB\left(v(\tau_0)+\|H\|_1\right)e^{2C'C_n\|G\|_1}\int_0^{\tau_1}G(\sigma)\,d\sigma.
\end{align*}
This estimate holds for every $0<\tau_1\leq\tau_0$, as long as \eqref{eq:Condition} holds. Then, letting $\tau_1\to 0^+$, and using the definition of $B$ and Lemma~\ref{Reduction}, we obtain that
\[
\lim_{\tau_1\to 0^+}v(\tau_1)\leq 2B\left(v(\tau_0)+\|H\|_1\right)\leq \exp\left(C_{n,\lambda}\left(\|b\|_{n,1}+\|d\|_{\frac{n}{2},1}\right)\right)\left(v(\tau_0)+\|H\|_1\right),
\]
as long as $\|c_2\|_{n,\infty}<\nu_{n,\lambda}$ and \eqref{eq:Condition} hold. Combining with \eqref{eq:HNorm} then shows \eqref{eq:forTau0}, and \eqref{eq:withTau} follows from the fact that $v$ is decreasing and \eqref{eq:equimeasurable}.
\end{proof}

As a corollary, we obtain the following maximum principle, which generalizes \cite[Proposition 7.5]{SakAPDE}. From Remark~\ref{bcShouldBeSmall}, to have such an estimate with constants depending only on the norms of the coefficients for arbitrary $b\in L^{n,1}$ requires that $c$ should have small norm; hence, we will assume that $c$ belongs to $L^{n,\infty}$ and has small norm.

\begin{prop}\label{MaxPrincipleC}
Let $\Omega\subseteq\bR^n$ be a domain. Let $A$ be uniformly elliptic and bounded in $\Omega$, with ellipticity $\lambda$, and let $b,f\in L^{n,1}(\Omega)$, $g\in L^{\frac{n}{2},1}(\Omega)$, with $\|b\|_{n,1}\leq M$.

There exists $\beta=\beta_{n,\lambda,M}>0$ such that, if $c\in L^{n,\infty}(\Omega)$ and $d\in L^{\frac{n}{2},1}(\Omega)$ with $\|c\|_{n,\infty}<\beta$ and $\|d\|_{\frac{n}{2},1}<\beta$, then for every subsolution $u\in Y^{1,2}(\Omega)$ to
\[
-\dive(A\nabla u+bu)+c\nabla u+du\leq -\dive f+g
\]	
in $\Omega$, we have that
\[
\sup_{\Omega}u\leq C\sup_{\partial\Omega}u^++C\|f\|_{n,1}+C\|g\|_{\frac{n}{2},1},
\]
where $C$ depends on $n,\lambda$ and $M$.
\end{prop}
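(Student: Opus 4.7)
My plan is to apply Proposition~\ref{GlobalUpperBound} and to exploit the freedom to take the cutoff $\tau_0$ arbitrarily large under the smallness hypothesis, so as to make the auxiliary term $v(\tau_0)$ on the right-hand side of \eqref{eq:forTau0} vanish in the limit.

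First, as in the proof of Proposition~\ref{GlobalUpperBound}, I would reduce to the case $\sup_{\partial\Omega} u^+ = 0$, which places $u^+ \in Y_0^{1,2}(\Omega)$. Requiring $\beta \leq \nu_{n,\lambda}$ with $\nu_{n,\lambda}$ as in Lemma~\ref{MainEstimate}, I would then apply Proposition~\ref{GlobalUpperBound} with the trivial decomposition $c_1 = 0$, $c_2 = c$ and any fixed $q \in (1,\infty)$, so that the factor $\exp(C_2 \|c_1\|_{n,q}^q)$ appearing throughout the proof of that proposition reduces to $1$.

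The key observation is that the cutoff condition \eqref{eq:Condition}, namely $\int_0^{\tau_0} G \leq 1/(2C'C_n)$, actually holds for \emph{every} $\tau_0 \in (0,\infty)$ once the global norm $\|G\|_1$ is smaller than $1/(2C'C_n)$. To arrange this, I would use Lemma~\ref{Reduction} to obtain $e$ with $\|e\|_{n,1} \leq C_n \|d\|_{n/2,1} \leq C_n\beta$, and then bound $\|b'\|_{n,1} \leq M + C_n\beta$ and, via the embedding \eqref{eq:LorentzNormsRelations}, $\|c'\|_{n,\infty} \leq C_n\beta$. Combining H{\"o}lder's inequality \eqref{eq:Holder} with the Lorentz estimate of Lemma~\ref{Talenti} yields
\[
\|G\|_1 \leq C_{n,\lambda}\|b'\|_{n,1}\|c'\|_{n,\infty} \leq C_{n,\lambda}(M+\beta)\beta,
\]
while $C' = \exp(C_{n,\lambda}\|b'\|_{n,1}) \leq \exp(C_{n,\lambda}(M+1))$ once $\beta \leq 1$. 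Choosing $\beta = \beta_{n,\lambda,M}$ sufficiently small thus forces $2C'C_n \|G\|_1 < 1$.

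With this choice I would re-run the Gr{\"o}nwall argument \eqref{eq:toIntegrate}--\eqref{eq:Condition} of Proposition~\ref{GlobalUpperBound}. Letting $\tau_1 \to 0^+$ gives $\lim_{\tau_1 \to 0^+} v(\tau_1) \leq 2B(v(\tau_0) + \|H\|_1)$ for every $\tau_0 > 0$; since $u^+ \in L^{2^*}(\Omega)$ implies $v \in L^{2^*}(0,\infty)$ and hence $v(\tau_0) \to 0$ as $\tau_0 \to \infty$, this collapses to $\sup_\Omega u^+ \leq 2B\|H\|_1$, which by \eqref{eq:HNorm} is bounded by $C(\|f\|_{n,1} + \|g\|_{n/2,1})$ with $C = C_{n,\lambda,M}$. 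Undoing the initial reduction reintroduces the boundary term and yields the stated maximum principle. The main obstacle is simply bookkeeping: verifying that no constant along the way hides residual dependence on the coefficients beyond $n,\lambda,M$, and that the passage $\tau_0 \to \infty$ is legitimate — the latter being automatic from $v \in L^{2^*}(0,\infty)$.
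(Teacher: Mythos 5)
Your approach matches the paper's: take $c_1\equiv 0$ in Proposition~\ref{GlobalUpperBound}, show that $\beta$ small forces $\|G\|_1<1/(2C'C_n)$ (via Lemma~\ref{Reduction}, H{\"o}lder's inequality~\eqref{eq:Holder}, and Lemma~\ref{Talenti}), so that condition~\eqref{eq:Condition} holds for \emph{every} $\tau_0$, and conclude by letting $\tau_0\to\infty$ and using $v(\tau_0)\to 0$. One minor bookkeeping slip: after the reduction in Proposition~\ref{GlobalUpperBound} one has $c_1'=c_1-e=-e\neq 0$, so the exponential factor is $\exp\bigl(C_2\|c_1'\|_{n,q}^q\bigr)=\exp\bigl(C_2\|e\|_{n,q}^q\bigr)$ rather than $1$; but since $\|e\|_{n,1}\leq C_n\beta\leq C_n$, this factor is bounded by a constant depending only on $n,\lambda$ (for $q=1$), so your bound on $C'$ and the final smallness choice for $\beta$ go through unchanged.
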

\begin{proof}
Assume that $\|c\|_{n,\infty}<\beta$ and $\|d\|_{\frac{n}{2},1}<\beta$, for $\beta$ to be chosen later. Consider the $\nu_{n,\lambda}$ from Lemma~\ref{MainEstimate}, and take $c_1\equiv 0$ and $q=1$ in Proposition~\ref{GlobalUpperBound}. We will take $\beta\leq\nu_{n,\lambda}$, so it is enough to show that we can take $\tau_0=\infty$ in \eqref{eq:forTau0}, since $\lim_{\tau\to\infty}v(\tau)=0$. Hence, from \eqref{eq:Condition}, and the definitions of $C'$ and $e$ from the proof of Proposition~\ref{GlobalUpperBound}, it will be enough to have that
\begin{equation}\label{eq:ToSat}
\int_0^{\infty}\sigma^{\frac{2}{n}-1}\sqrt{\Psi_{|b-e|^2}(\sigma)\Psi_{|c-e|^2}(\sigma)}\,d\sigma\leq C\exp\left(-C\|b-e\|_{n,1}-C\|e\|_{n,1}\right),
\end{equation}
where $C$ depends on $n$ and $\lambda$ only.

We first bound the left hand side from above using Lemmas~\ref{Talenti} and \ref{Reduction}, to obtain
\begin{align*}
\int_0^{\infty}\sigma^{\frac{2}{n}-1}\sqrt{\Psi_{|b-e|^2}(\sigma)\Psi_{|c-e|^2}(\sigma)}\,d\sigma&=\left\|\sqrt{\Psi_{|b-e|^2}\Psi_{|c-e|^2}}\right\|_{\frac{n}{2},1}\leq C\left\|\sqrt{\Psi_{|b-e|^2}}\right\|_{n,1}\left\|\sqrt{\Psi_{|c-e|^2}}\right\|_{n,\infty}\\
&\leq C\|b-e\|_{n,1}\|c-e\|_{n,\infty}\leq C(M+\beta)\beta,
\end{align*}
while
\[
-C\|b-e\|_{n,1}-C\|e\|_{n,1}\geq-C\|b\|_{n,1}-C\|e\|_{n,1}\geq -CM-C\beta.
\]
From the last two estimates, \eqref{eq:ToSat} will be satisfied as long as
\[
C(M+\beta)\beta e^{CM+C\beta}\leq 1.
\]
So, choosing $\beta>0$ depending on $n,\lambda$ and $M$, such that the last estimate is satisfied and also $\beta\leq\nu_{n,\lambda}$ completes the proof.
\end{proof}

In addition, we also obtain the following maximum principle, which concerns perturbations of the operator $-\dive(A\nabla u)+c\nabla u$.

\begin{prop}\label{MaxPrincipleB}
Let $\Omega\subseteq\bR^n$ be a domain. Let $A$ be uniformly elliptic and bounded in $\Omega$, with ellipticity $\lambda$, and let $q<\infty$, $c=c_1+c_2\in L^{n,\infty}(\Omega)$, with $\|c_2\|_{n,\infty}<\nu$ and $\|c_1\|_{n,q}\leq M$, where $\nu=\nu_{n,\lambda}$ appears in Lemma~\ref{MainEstimate}. Assume also that $f\in L^{n,1}(\Omega)$, $g\in L^{\frac{n}{2},1}(\Omega)$. 

There exists $\gamma=\gamma_{n,q,\lambda,M}>0$ such that, if $b\in L^{n,1}(\Omega)$ and $d\in L^{\frac{n}{2},1}(\Omega)$ with $\|b\|_{n,1}<\gamma$ and $\|d\|_{\frac{n}{2},1}<\gamma$, then for any subsolution $u\in Y^{1,2}(\Omega)$ to
\[
-\dive(A\nabla u+bu)+c\nabla u+du\leq -\dive f+g
\]	
in $\Omega$, we have
\[
\sup_{\Omega}u\leq C\sup_{\partial\Omega}u^++C\|f\|_{n,1}+C\|g\|_{\frac{n}{2},1},
\]
where $C$ depends on $n,q,\lambda$ and $M$.
\end{prop}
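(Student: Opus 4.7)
The plan is to mirror the proof of Proposition~\ref{MaxPrincipleC}, reducing to Proposition~\ref{GlobalUpperBound} and showing that the smallness of $\|b\|_{n,1}$ and $\|d\|_{\frac{n}{2},1}$ allows us to send the free parameter $\tau_0$ to infinity in \eqref{eq:forTau0}. By translation I may assume $\sup_{\partial\Omega} u^+ = 0$. Applying Lemma~\ref{Reduction} I obtain $e \in L^{n,1}(\Omega)$ with $\dive e = d$ and $\|e\|_{n,1} \leq C_n\|d\|_{\frac{n}{2},1} < C_n\gamma$, and setting $b' = b - e$, $c_1' = c_1 - e$ and $c' = c_1' + c_2$, the equation becomes $-\dive(A\nabla u + b'u) + c'\nabla u \leq -\dive f + g$. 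The hypothesis $\|c_2\|_{n,\infty} < \nu_{n,\lambda}$ is unchanged, so Proposition~\ref{GlobalUpperBound} applies with these primed data, and the proof reduces to verifying its condition \eqref{eq:Condition} with $\tau_0 = \infty$; since $v(\tau)\to 0$ as $\tau\to\infty$ (as $v\in L^{2^*}(0,\infty)$ is decreasing), this then yields the claimed estimate.

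To estimate $\int_0^\infty G$, where $G(\sigma) = C_1 \sigma^{\frac{2}{n} - 1}\sqrt{\Psi_{|b'|^2}(\sigma)\Psi_{|c'|^2}(\sigma)}$, I use H\"older's inequality \eqref{eq:Holder} with exponents $(n,1)$ and $(n,\infty)$ together with the Lorentz bounds of Lemma~\ref{Talenti}, to obtain
\[
\int_0^\infty G \leq C_{n,\lambda} \bigl\|\sqrt{\Psi_{|b'|^2}}\bigr\|_{n,1} \bigl\|\sqrt{\Psi_{|c'|^2}}\bigr\|_{n,\infty} \leq C_{n,\lambda} \|b'\|_{n,1}\,\|c'\|_{n,\infty}.
\]
The first factor is small: $\|b'\|_{n,1} \leq (1 + C_n)\gamma$. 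The second factor is \emph{bounded but not small}, and controlling it is the decisive step: via the embedding \eqref{eq:LorentzNormsRelations} one has $\|c_1\|_{n,\infty} \leq C_{n,q}\|c_1\|_{n,q} \leq C_{n,q}M$, so that $\|c'\|_{n,\infty} \leq C_{n,q}M + \nu + C_n\gamma$ is controlled in terms of $n, q, M$ alone.

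The same embedding yields $\|c_1'\|_{n,q} \leq M + C_{n,q}\gamma$, so the exponential factor $C' = \exp(C_{n,\lambda}\|b'\|_{n,1} + C_2\|c_1'\|_{n,q}^q)$ appearing on the right-hand side of \eqref{eq:Condition} is likewise bounded by a constant depending only on $n, q, \lambda, M$ provided $\gamma \leq 1$. Combining these, $\int_0^\infty G \leq C(n, q, \lambda, M)\,\gamma$, and choosing $\gamma = \gamma_{n, q, \lambda, M}$ small enough makes this less than $1/(2C' C_n)$, which verifies \eqref{eq:Condition} with $\tau_0 = \infty$. The main obstacle, and the raison d'\^etre of the hypothesis $q < \infty$, is precisely this quantitative passage from $L^{n,q}$ to $L^{n,\infty}$ via \eqref{eq:LorentzNormsRelations}: when $q = \infty$ the $L^{n,\infty}$ norm of $c_1$ cannot be controlled by $M$, which is exactly why in that regime the separate smallness assumption on an $L^{n,\infty}$ component of $c$ cannot be avoided.
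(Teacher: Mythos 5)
Your proof is correct and follows the same route as the paper: it reduces to Proposition~\ref{GlobalUpperBound} and verifies that condition \eqref{eq:Condition} can be satisfied with $\tau_0=\infty$, exactly as the paper does (the paper's own proof is terse, pointing to the proof of Proposition~\ref{MaxPrincipleC} and merely recording the inequality $\int_0^\infty G \leq C\exp(-C\|b-e\|_{n,1}-C\|c_1-e\|_{n,q}^q)$ that needs to hold); your bounds $\int_0^\infty G \leq C_{n,\lambda}\|b'\|_{n,1}\|c'\|_{n,\infty}$, $\|b'\|_{n,1}\lesssim \gamma$, $\|c'\|_{n,\infty}\lesssim C_{n,q}M + \nu + C_n\gamma$, and the control on $C'$ fill in precisely the steps the paper leaves implicit. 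One cosmetic point: the quasi-triangle inequality \eqref{eq:Seminorm} introduces additional multiplicative constants, so your bounds should read $\|c_1'\|_{n,q}\leq C_{n,q}M + C_{n,q}\gamma$ rather than $M+C_{n,q}\gamma$, but this is harmless.

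Your closing remark about the role of $q<\infty$ is however misleading. You say that when $q=\infty$ ``the $L^{n,\infty}$ norm of $c_1$ cannot be controlled by $M$''; but if $q=\infty$ the bound $\|c_1\|_{n,\infty}\leq M$ is trivial, so that is not the obstruction. The true reason $q<\infty$ is needed lives upstream, in Lemma~\ref{NormWithE} and hence in Lemma~\ref{MainEstimate}: for $q<\infty$ one can absorb the $L^{n,q}$ piece of $c$ at the cost of a factor $e^{C_2\|c_1\|_{n,q}^q}$ with an arbitrarily small coefficient in front of the logarithm, whereas for the $L^{n,\infty}$ piece the coefficient in front of $\ln(\sigma_2/\sigma_1)$ is comparable to $\|c_2\|_{n,\infty}$ and therefore must be made small by hypothesis. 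This distinction is what forces the separate smallness assumption on $c_2$, not the embedding \eqref{eq:LorentzNormsRelations}. Since this is commentary and not part of the chain of inequalities, it does not affect the validity of your proof.
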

\begin{proof}
As in the proof of Corollary~\ref{MaxPrincipleC}, we will take $\gamma\leq\nu_{n,\lambda}$, and it will be enough to have that
\[
\int_0^{\infty}\sigma^{\frac{2}{n}-1}\sqrt{\Psi_{|b-e|^2}(\sigma)\Psi_{|c-e|^2}(\sigma)}\,d\sigma\leq C\exp\left(-C\|b-e\|_{n,1}-C\|c_1-e\|_{n,q}^q\right),
\]
whenever $\|b\|_{n,1}<\gamma$ and $\|d\|_{\frac{n}{2},1}<\gamma$, and where $C=C_{n,q,\lambda}$. Then, a similar argument as in the proof of Proposition~\ref{MaxPrincipleC} completes the proof.
\end{proof}

\section{Local boundedness}\label{secLocal}

\subsection{The first step: all coefficients are small}

The first step to obtain the Moser estimate is via a coercivity assumption, which we now turn to. The following lemma is standard, and we only give a sketch of its proof. We will set $2_*=\frac{2n}{n+2}$.

\begin{lemma}\label{Coercivity}
Let $\Omega\subseteq\bR^n$ be a domain, and $A$ be uniformly elliptic and bounded in $\Omega$, with ellipticity $\lambda$. There exists $\theta=\theta_{n,\lambda}>0$ such that, if $b\in L^{n,1}(\Omega)$, $c\in L^{n,\infty}(\Omega)$ and $d\in L^{\frac{n}{2},1}(\Omega)$ with $\|b\|_{n,1}\leq\theta$, $\|c\|_{n,\infty}\leq\theta$ and $\|d\|_{\frac{n}{2},1}\leq\theta$, then the operator
\[
\mathcal{L}u=-\dive(A\nabla u+bu)+c\nabla u+du
\]
is coercive, and every solution $v\in W_0^{1,2}(\Omega)$ to the equation $\mathcal{L}u=-\dive F+G$ for $F\in L^2(\Omega)$ and $G\in L^{2_*}(\Omega)$ satisfies the estimate
\begin{equation}\label{eq:SecondInLemma}
\|\nabla v\|_{L^2(B_{2r})}\leq C_{n,\lambda}\|F\|_{L^2(\Omega)}+C_{n,\lambda}\|G\|_{L^{2_*}(\Omega)}.
\end{equation}
Also, if $\Omega=B_{2r}$ and $w\in W^{1,2}(B_{2r})$ is a subsolution to $-\dive(A\nabla w+bw)+c\nabla w+dw\leq 0$, then
\begin{equation}\label{eq:Cacciopoli}
\int_{B_r}|\nabla w|^2\leq\frac{C}{r^2}\int_{B_{2r}}|w^+|^2,
\end{equation}
where $C$ depends on $n,\lambda$ and $\|A\|_{\infty}$.

Moreover, for any subsolution $u\in W^{1,2}(B_{2r})$ to $-\dive(A\nabla u)+c\nabla u\leq 0$ in $B_{2r}$ and $\alpha\in(1,2)$, we have that
\begin{equation}\label{eq:FirstInLemma}
\sup_{B_r}u\leq\frac{C}{(\alpha-1)^{n/2}}\left(\fint_{B_{\alpha r}}|u^+|^2\right)^{\frac{1}{2}},
\end{equation}
where $C$ depends on $n,\lambda$ and $\|A\|_{\infty}$.
\end{lemma}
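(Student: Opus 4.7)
The plan is to address the four assertions in turn, each time exploiting the smallness of the norms through H\"older's inequality for Lorentz spaces \eqref{eq:Holder} and the Sobolev--Lorentz embedding $W_0^{1,2}(\Omega)\hookrightarrow L^{2^*,2}(\Omega)$, which refines the Sobolev inequality recalled in the preliminaries. The bilinear form associated to $\mathcal{L}$ is
\[
B[u,v]=\int_\Omega A\nabla u\,\nabla v+bu\cdot\nabla v+c\nabla u\cdot v+duv.
\]
Testing with $v=u\in W_0^{1,2}(\Omega)$ and applying H\"older--Lorentz with triples of indices $(n,2)\times(2^*,2)\times(2,\infty)$, $(n,\infty)\times(2^*,2)\times(2,2)$, and $(n/2,1)\times(n/(n-2),\infty)$ (together with $L^{n,1}\hookrightarrow L^{n,2}$ and Sobolev--Lorentz) would yield
\[
\int_\Omega|b||u||\nabla u|+|c||u||\nabla u|+|d|u^2\leq C_n\bigl(\|b\|_{n,1}+\|c\|_{n,\infty}+\|d\|_{n/2,1}\bigr)\|\nabla u\|_2^2.
\]
Choosing $\theta=\theta_{n,\lambda}$ with $3C_n\theta\leq\lambda/2$ and invoking ellipticity gives $B[u,u]\geq(\lambda/2)\|\nabla u\|_2^2$; boundedness of $B$ on $W_0^{1,2}$ follows by the same estimates, and Lax--Milgram would produce the unique $v\in W_0^{1,2}(\Omega)$ solving $\mathcal{L}v=-\dive F+G$, with \eqref{eq:SecondInLemma} obtained by testing against $v$ and invoking the Sobolev inequality.

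For the Caccioppoli estimate \eqref{eq:Cacciopoli}, I would fix $\eta\in C_c^\infty(B_{2r})$ with $\eta\equiv 1$ on $B_r$ and $|\nabla\eta|\leq C/r$, and test the subsolution inequality against $\phi=w^+\eta^2\in W_0^{1,2}(B_{2r})$. Setting $u=w^+$, the principal part yields $\lambda\int\eta^2|\nabla u|^2$, the crossed term is absorbed via $2ab\leq\epsilon a^2+b^2/\epsilon$ (contributing $(\|A\|_\infty^2/\epsilon)\int u^2|\nabla\eta|^2$), and the three lower-order contributions are estimated, exactly as in the coercivity computation now applied to $u\eta\in W_0^{1,2}(B_{2r})$, by $C\theta\bigl(\|\eta\nabla u\|_2+\|u\nabla\eta\|_2\bigr)^2$. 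Smallness of the norms lets me absorb $\|\eta\nabla u\|_2^2$, leaving \eqref{eq:Cacciopoli}.

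For the Moser bound \eqref{eq:FirstInLemma}, only $A$ and $c$ appear, so the classical iteration goes through. For $q\geq 1$ and a cutoff $\eta$ supported between concentric balls $B_{r'}\subset B_{r''}\subset B_{\alpha r}$, I would test against $\phi=(u^+)^{2q-1}\eta^2$ and set $w=(u^+)^q$; the principal part gives $\frac{(2q-1)\lambda}{q^2}\int\eta^2|\nabla w|^2$, while the drift $\frac{1}{q}\int c\,\eta^2 w\,\nabla w$ is bounded by $\tfrac{C}{q}\|c\|_{n,\infty}\bigl(\|\eta\nabla w\|_2+\|w\nabla\eta\|_2\bigr)\|\eta\nabla w\|_2$ via H\"older--Lorentz and Sobolev--Lorentz applied to $\eta w$, and absorbed thanks to $\|c\|_{n,\infty}\leq\theta$. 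This would yield the reverse-H\"older type bound $\|u^+\|_{L^{2^*q}(B_{r'})}\leq\bigl(C(r''-r')^{-2}\bigr)^{1/(2q)}\|u^+\|_{L^{2q}(B_{r''})}$, and iterating with $q_k=\chi^k$, $\chi=n/(n-2)$, over radii decreasing geometrically from $\alpha r$ to $r$ would give the factor $(\alpha-1)^{-n/2}$ in \eqref{eq:FirstInLemma} through the standard telescoping. The main technical point throughout the lemma is the absence of any distributional information on $\dive c$ (or $\dive b$), which forces one to rely on direct absorption rather than integration by parts; this is precisely why smallness of the norms (and the $1/q$ gain in the Moser step, which makes the absorption uniform in $q\geq 1$) is invoked at every stage.
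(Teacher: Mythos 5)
Your proof is correct and follows essentially the same route the paper uses: testing with $v$ (coercivity and \eqref{eq:SecondInLemma}), with $w^+\eta^2$ (Caccioppoli \eqref{eq:Cacciopoli}), and with powers of $u^+$ against a cutoff (the Moser bound \eqref{eq:FirstInLemma}), each time absorbing the lower-order contributions through Lorentz--H\"older and the Sobolev--Lorentz embedding. The paper is in fact sketchier than you are — it derives only the $q=1$ Caccioppoli estimate explicitly and then defers to \cite[Theorem~8.17, pp.~196--197]{Gilbarg} for the iteration — whereas you spell out the Lorentz index bookkeeping and, usefully, the $1/q$ factor that makes the drift absorption uniform in $q\geq1$; these are accurate clarifications rather than departures from the paper's argument.
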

\begin{proof}
We first show \eqref{eq:FirstInLemma}, following the lines of the proof of \cite[Theorem 8.17]{Gilbarg}: if $\phi$ is a smooth cutoff function, then using $u^+\phi^2$ as a test function, we obtain 
\begin{align}\label{eq:A}
\begin{split}
\int_{B_{2r}}A\nabla u^+\nabla u^+\cdot \phi^2&\leq -2\int_{B_{2r}}A\nabla u^+\nabla\phi\cdot u^+\phi-\int_{B_{2r}}c\nabla u^+\cdot u^+\phi^2\\
&\leq C\|\phi\nabla u^+\|_{L^2(B_{2r})}\|u^+\nabla\phi\|_{L^2(B_{2r})}+\left\|cu^+\phi\right\|_{L^2(B_{2r})}\|\phi\nabla u^+\|_{L^2(B_{2r})}.
\end{split}
\end{align}
Then, assuming that $\|c\|_{n,\infty}\leq\theta$, for $\theta$ to be chosen later, using H{\"o}lder's estimate \eqref{eq:Holder} we have
\begin{align*}
\left\|cu^+\phi\right\|_{L^2(B_{2r})}\leq C_n\|c\|_{n,\infty}\|u^+\phi\|_{L^{2^*,2}(B_{2r})}\leq C_n\theta\|u^+\phi\|_{L^{2^*,2}(B_{2r})},
\end{align*}
and combining with \cite[Lemma 2.2]{SakAPDE}, we have
\begin{equation}\label{eq:cBound}
\left\|cu^+\phi\right\|_{L^2(B_{2r})}\leq C_n\theta\|\nabla(u^+\phi)\|_{L^2(B_{2r})}\leq C_n\theta\|\phi\nabla u^+\|_{L^2(B_{2r})}+C_n\theta\|u^+\nabla \phi\|_{L^2(B_{2r})}.
\end{equation}
So, choosing $\theta$ such that $C_n\theta<\frac{\lambda}{4}$, and plugging in \eqref{eq:A}, we obtain that
\[
\int_{B_{2r}}|\phi\nabla u^+|^2\leq C\int_{B_{2r}}|u^+\nabla\phi|^2,
\]
where $C$ depends on $n,\lambda$ and $\|A\|_{\infty}$. This estimate corresponds to \cite[(8.53), page 196]{Gilbarg}, and following the lines of the argument on \cite[pages 196 and 197]{Gilbarg} we obtain that
\[
\sup_{B_r}u\leq C\left(\fint_{B_{2r}}|u^+|^2\right)^{\frac{1}{2}},
\]
where $C$ depends on $n,\lambda$ and $\|A\|_{\infty}$. To complete the proof of \eqref{eq:FirstInLemma} note that, for all $x\in B_r$, the last estimate shows that
\[
\sup_{B_{\frac{\alpha-1}{2}r}(x)}u\leq C\left(\fint_{B_{(\alpha-1)r}(x)}|u^+|^2\right)^{\frac{1}{2}}\leq \frac{C}{(\alpha-1)^{n/2}r^{n/2}}\left(\int_{B_{\alpha r}}|u^+|^2\right)^{\frac{1}{2}},
\]
since $B_{(\alpha-1)r}(x)\subseteq B_{\alpha r}$, and considering the supremum for $x\in B_r$ shows \eqref{eq:FirstInLemma}.
	
Finally, coercivity of $\mathcal{L}$, \eqref{eq:SecondInLemma} and \eqref{eq:Cacciopoli} follow via a combination of the procedure as in \eqref{eq:A} and \eqref{eq:cBound}, where for \eqref{eq:SecondInLemma} we use $v$ as a test function, and for \eqref{eq:Cacciopoli} we use $w^+\phi^2$ as a test function.
\end{proof}

We now turn to local boundedness when all the lower order coefficients have small norms.

\begin{lemma}\label{localAllSmall}
Let $A$ be uniformly elliptic and bounded in $B_{2r}$, with ellipticity $\lambda$. There exists $\theta'=\theta'_{n,\lambda}>0$ such that, if $b\in L^{n,1}(B_{2r})$, $c\in L^{n,\infty}(B_{2r})$ and $d\in L^{\frac{n}{2},1}(B_{2r})$ with $\|b\|_{n,1}\leq\theta'$, $\|c\|_{n,\infty}\leq\theta'$ and $\|d\|_{\frac{n}{2},1}\leq\theta'$, then for any subsolution $u\in W^{1,2}(B_{2r})$ to $-\dive(A\nabla u+bu)+c\nabla u+du\leq 0$,
\[
\sup_{B_r}u^+\leq C\left(\fint_{B_{2r}}|u^+|^2\right)^{\frac{1}{2}},
\]
where $C$ depends on $n,\lambda$ and $\|A\|_{\infty}$.
\end{lemma}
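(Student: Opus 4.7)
The plan is a perturbation argument that leverages Moser's estimate \eqref{eq:FirstInLemma} for the reduced operator $\mathcal{L}_0 u = -\dive(A\nabla u)+c\nabla u$ together with the maximum principle (Proposition~\ref{MaxPrincipleC}) to upgrade to the full operator $\mathcal{L}$ when $b, c, d$ are all small. First I would reduce to the case $d\equiv 0$ by invoking Lemma~\ref{Reduction} to write $d=\dive e$ with $\|e\|_{n,1}\leq C_n\|d\|_{\frac{n}{2},1}\leq C\theta'$, and absorbing $e$ into $b$ and $c$ (whose $L^{n,1}$ and $L^{n,\infty}$ norms remain of order $\theta'$). A standard chain-rule argument then allows me to replace $u$ by $u^+$, so I may assume $u\geq 0$.

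Fix $r\leq\rho<\rho'\leq 2r$. By the coercivity in Lemma~\ref{Coercivity}---applicable since $bu\in L^2(B_{\rho'})$ from \eqref{eq:Holder} and the Sobolev embedding---there is a unique $v\in W_0^{1,2}(B_{\rho'})$ solving
\[
-\dive(A\nabla v)+c\nabla v=\dive(bu)\quad\text{in }B_{\rho'}.
\]
Subtracting the weak formulations shows that $w := u-v$ is a subsolution of $\mathcal{L}_0 w\leq 0$ in $B_{\rho'}$, so that \eqref{eq:FirstInLemma}, rescaled to concentric balls, yields
\[
\sup_{B_\rho} w \leq C(\rho'-\rho)^{-n/2}\|w^+\|_{L^2(B_{\rho'})}.
\]
Applying Proposition~\ref{MaxPrincipleC} to both $v$ and $-v$---whose governing operator $\mathcal{L}_0$ has $b=d=0$ and $\|c\|_{n,\infty}\leq\theta'$ small, and whose traces on $\partial B_{\rho'}$ vanish---together with H\"older's inequality \eqref{eq:Holder} gives
\[
\|v\|_{L^\infty(B_{\rho'})}\leq C\|bu\|_{L^{n,1}(B_{\rho'})}\leq C\|b\|_{n,1}\sup_{B_{\rho'}}u\leq C\theta'\sup_{B_{\rho'}}u.
\]
Combining the two bounds, together with $\|w^+\|_{L^2(B_{\rho'})}\leq\|u\|_{L^2(B_{\rho'})}+(\rho')^{n/2}\|v\|_{L^\infty(B_{\rho'})}$, yields for all such $\rho<\rho'$ an estimate of the form
\[
\sup_{B_\rho}u\leq\frac{C}{(\rho'-\rho)^{n/2}}\|u\|_{L^2(B_{2r})}+C\theta'\sup_{B_{\rho'}}u.
\]

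Finally, I would apply a standard Stampacchia-type iteration lemma on a geometric sequence of radii in $[r,2r]$, choosing $\theta'$ small enough that $C\theta'<1$, to absorb the last term and conclude that $\sup_{B_r}u\leq C\bigl(\fint_{B_{2r}}u^2\bigr)^{1/2}$. The main technical obstacle is the circular appearance of $\sup u$ in the maximum-principle bound for $v$: the Stampacchia iteration only absorbs this term once we know a priori that $\sup_{B_{\rho'}}u$ is finite on the intermediate balls. I would secure this qualitative boundedness by a standard approximation argument---truncating the coefficients to bounded functions (for which classical elliptic regularity guarantees locally bounded solutions), applying the quantitative estimate to the approximants, and passing to the limit using the coercivity of $\mathcal{L}$ from Lemma~\ref{Coercivity} together with weak compactness.
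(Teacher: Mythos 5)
The overall strategy matches the paper's: decompose $u=v+w$ with $v$ solving an auxiliary Dirichlet problem for $\mathcal{L}_0=-\dive(A\nabla\cdot)+c\nabla\cdot$ in an intermediate ball, bound $v$ via the maximum principle, bound $w$ via Moser's estimate for $\mathcal{L}_0$ (Lemma~\ref{Coercivity}), iterate, then remove the a priori qualitative boundedness by truncating the coefficients. However, there is a genuine gap in the step where you ``combine the two bounds.''

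You propose to control the $v$-contribution to $\|w^+\|_{L^2(B_{\rho'})}$ by $(\rho')^{n/2}\|v\|_{L^\infty(B_{\rho'})}$, and then claim the resulting estimate has the form
\[
\sup_{B_\rho}u\leq\frac{C}{(\rho'-\rho)^{n/2}}\|u\|_{L^2(B_{2r})}+C\theta'\sup_{B_{\rho'}}u .
\]
But tracing the multiplication through, the term coming from $v$ is
\[
\frac{C}{(\rho'-\rho)^{n/2}}\cdot(\rho')^{n/2}\|v\|_{L^\infty(B_{\rho'})}\leq C\theta'\frac{(\rho')^{n/2}}{(\rho'-\rho)^{n/2}}\sup_{B_{\rho'}}u ,
\]
so the coefficient of $\sup_{B_{\rho'}}u$ is \emph{not} a fixed $C\theta'$ but a quantity that blows up as $\rho'\to\rho$. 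The standard Stampacchia/Giaquinta absorption lemma requires this coefficient to be a constant $\theta_0<1$ uniformly over $\rho<\rho'$, and the iteration you invoke does not converge: with a geometric sequence of radii the absorbed factor becomes $\prod_i C\theta' 2^{ni}$, which diverges no matter how small $\theta'$ is because of the accumulating $2^{ni}$.

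The fix---which is what the paper does---is to bound the $L^2$ (or $L^{2^*}$) norm of $v$ by the energy estimate \eqref{eq:SecondInLemma} together with the Sobolev inequality, yielding $\|v\|_{L^{2^*}(B_{\rho'})}\leq C_{n,\lambda}\|u\|_{L^{2^*}(B_{\rho'})}$ with a constant \emph{independent of $\theta'$ and of the radii}. This puts the $v$-contribution to $\|w^+\|_{L^2}$ into the ``data'' term (the one carrying $(\rho'-\rho)^{-n/2}\|u\|_{L^2}$), while the genuine $\sup v\leq C\theta'\sup u$ bound is applied only to $\sup_{B_\rho}u\leq\sup_{B_\rho}w+\sup_{B_\rho}v$, keeping the coefficient of $\sup_{B_{\rho'}}u$ equal to a fixed $C\theta'$. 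You already invoke coercivity for the qualitative boundedness at the end; you also need the quantitative energy estimate in the main iteration step, applied to $v$.

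A more minor point: you reduce to $d\equiv 0$ using Lemma~\ref{Reduction}, whereas the paper keeps $d$ and simply puts $\dive(bu)-du$ on the right-hand side of the auxiliary equation. Both are fine; the paper's version avoids the small technical point that absorbing $e$ into $c$ changes $c$ only in $L^{n,1}$, not in a way that would worsen the $L^{n,\infty}$ smallness, which must still be checked.
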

\begin{proof}
Consider the $\theta_{n,\lambda}$ that appears in Lemma~\ref{Coercivity}. We will take $\theta'\leq\theta_{n,\lambda}$, so that the operator is coercive. Then, if $u$ is a subsolution to $\mathcal{L}u\leq 0$, the proof of \cite[Theorem 3.5]{StampacchiaDirichlet} implies that $u^+$ is a subsolution to $\mathcal{L}u^+\leq 0$; therefore, we can assume that $u\geq 0$.

Assume first that $b,c,d$ are bounded in $B_{2r}$, then \cite[Theorem 8.17]{Gilbarg} shows that
\begin{equation}\label{eq:uBounded}
\sup_{B_r}u<\infty.
\end{equation}
Let $\frac{1}{4}\leq\eta<\eta'\leq\frac{1}{2}$. From coercivity of the operator $\mathcal{L}_0u=-\dive(A\nabla u)+c\nabla u$, and since $\dive(bu)-du\in W^{-1,2}(B_{\eta'r})=\left(W_0^{1,2}(B_{\eta'r})\right)^*$, the Lax-Milgram theorem shows that there exists $v\in W_0^{1,2}(B_{\eta' r})$ such that
\[
-\dive(A\nabla v)+c\nabla v=\dive(bu)-du.
\]
If $\beta$ is as in Proposition~\ref{MaxPrincipleC}, taking $\theta'\leq\beta_{n,\lambda,\theta_{n,\lambda}}$ the same proposition shows that
\begin{equation}\label{eq:supBound}
\sup_{B_{\eta' r}}v\leq C_{n,\lambda}\|bu\|_{L^{n,1}(B_{\eta' r})}+C_{n,\lambda}\|du\|_{L^{{\frac{n}{2},1}}(B_{\eta' r})}\leq C_{n,\lambda}\theta'\sup_{B_{\eta' r}}u,
\end{equation}
since $u\geq 0$. In addition, from the Sobolev inequality, estimate \eqref{eq:SecondInLemma} and the H{\"o}lder inequality,
\begin{equation}\label{eq:vAbove}
\|v\|_{L^{2^*}(B_{\eta' r})}\leq C_n\|\nabla v\|_{L^2(B_{\eta' r})}\leq C_{n,\lambda}\|bu\|_{L^2(B_{\eta' r})}+C_{n,\lambda}\|du\|_{L^{2_*}(B_{\eta' r})}\leq C_{n,\lambda}\|u\|_{L^{2^*}(B_{\eta' r})}.
\end{equation}
Moreover, the function $w=u-v$ is a subsolution to $-\dive(A\nabla w)+c\nabla w\leq 0$, so \eqref{eq:FirstInLemma} implies that
\begin{align*}
\sup_{B_{\eta r}}w&\leq\frac{C}{(\frac{\eta'}{\eta}-1)^{n/2}}\left(\fint_{B_{\eta'r}}|w^+|^2\right)^{\frac{1}{2}}\leq\frac{C}{(\eta'-\eta)^{n/2}}\left(\fint_{B_{\eta' r}}|u|^2\right)^{\frac{1}{2}}+\frac{C}{(\eta'-\eta)^{n/2}}\left(\fint_{B_{\eta' r}}|v|^2\right)^{\frac{1}{2}}\\
&\leq\frac{C}{(\eta'-\eta)^{n/2}}\left(\fint_{B_{\eta' r}}|u|^{2^*}\right)^{\frac{1}{2^*}}\leq\frac{C}{(\eta'-\eta)^{n/2}}\left(\fint_{B_{r/2}}|u|^{2^*}\right)^{\frac{1}{2^*}},
\end{align*}
where we also used \eqref{eq:vAbove} for the penultimate estimate, and $C$ depends on $n,\lambda$ and $\|A\|_{\infty}$. Hence, the definition of $w$, the last estimate and \eqref{eq:supBound} show that
\[
\sup_{B_{\eta r}}u\leq\sup_{B_{\eta r}}v+\sup_{B_{\eta r}}w\leq C_{n,\lambda}\theta'\sup_{B_{\eta'r}}u+\frac{C}{(\eta'-\eta)^{n/2}}\left(\fint_{B_{r/2}}|u|^{2^*}\right)^{\frac{1}{2^*}},
\]
where $C$ depends on $n,\lambda$ and $\|A\|_{\infty}$.

We now set $\eta_N=\frac{1}{2}-4^{-N}$ and apply the previous estimate for $\eta=\eta_N$ and $\eta'=\eta_{N+1}$. Then,
\[
\sup_{B_{\eta_Nr}}u\leq C_{n,\lambda}\theta'\sup_{B_{\eta_{N+1} r}}u+2^{nN}C\left(\fint_{B_{r/2}}|u|^{2^*}\right)^{\frac{1}{2^*}}.
\]
Inductively, this shows that, for any $N\in\mathbb N$,
\[
\sup_{B_{\eta_1r}}u\leq (C_{n,\lambda}\theta')^N\sup_{B_{\eta_{N+1}r}}u+C\sum_{i=1}^N(C_{n,\lambda}\theta')^{i-1}2^{ni}\cdot \left(\fint_{B_r}|u|^{2^*}\right)^{\frac{1}{2^*}}.
\]
We will consider $\theta'$ such that $C_{n,\lambda}\theta'\leq\frac{1}{2}$. Then, letting $N\to\infty$ and using \eqref{eq:uBounded}, we obtain that
\[
\sup_{B_{r/4}}u\leq C\sum_{i=1}^{\infty}\left(2^nC_{n,\lambda}\theta'\right)^{i-1}\cdot \left(\fint_{B_{r/2}}|u|^{2^*}\right)^{\frac{1}{2^*}},
\]
and choosing $\theta'$ that also satisfies $2^nC_{n,\lambda}\theta'\leq\frac{1}{2}$ shows that
\begin{equation}\label{eq:forBounded}
\sup_{B_{r/4}}u\leq C \left(\fint_{B_{r/2}}|u|^{2^*}\right)^{\frac{1}{2^*}}\leq C\left(\fint_{B_{2r}}|u|^2\right)^{\frac{1}{2}},
\end{equation}
where we used \eqref{eq:Cacciopoli} and the Sobolev inequality for the last estimate, and where $C$ depends on $n,\lambda$ and $\|A\|_{\infty}$.

In the case that $b,c,d$ are not necessarily bounded, let $b^j$ be the coordinate functions of $b$, and define $b_N$ having coordinate functions $b_N^j=b^j\chi_{[|b^j|\leq N]}$ for $N\in\mathbb N$; define also similar approximations $c_N$ and $d_N$ for $c,d$ respectively. We then have that $\|b_N\|_{n,1}\leq \|b\|_{n,1}$, and similarly for $c_N$ and $d_N$. Since $\theta'\leq\theta_{n,\lambda}$, from coercivity in Lemma~\ref{Coercivity} and the Lax-Milgram theorem there exists $v_N\in W_0^{1,2}(B_{r/2})$ that solves the equation
\[
-\dive(A\nabla v_N+b_Nv_N)+c_N\nabla v_N+d_Nv_N=-\dive(A\nabla u+b_Nu)+c_N\nabla u+d_Nu
\]
in $B_{r/2}$. Then, from \eqref{eq:SecondInLemma},
\begin{equation}\label{eq:nablaVn}
\|\nabla v_n\|_{L^2(B_{r/2})}\leq C\|A\nabla u+b_Nu\|_{L^2(B_{r/2})}+C\|c_N\nabla u+d_Nu\|_{L^{2_*}(B_{r/2})}\leq\frac{C}{r}\|u\|_{L^2(B_{2r})},
\end{equation}
where we also used \eqref{eq:Cacciopoli} and H{\"o}lder's inequality for the last estimate. So, $(v_N)$ is bounded in $W_0^{1,2}(B_{r/2})$, hence from Rellich's theorem there exists a subsequence $(v_{N'})$ such that
\begin{equation}\label{eq:Convergences}
v_{N'}\to v_0\,\,\,\text{weakly in}\,\,\,W_0^{1,2}(B_{r/2})\,\,\,\text{and strongly in}\,\,\,L^{\frac{n}{n-2}}(B_{r/2}),\quad v_{N'}(x)\to v_0(x)\,\,\,\forall\,x\in F,
\end{equation}
where $F\subseteq B_{r/2}$ is a set with full measure.

Note now that $w_N=u-v_N$ is a solution to $-\dive(A\nabla w_N+b_Nw_N)+c_N\nabla w_N+d_Nw_N=0$ in $B_{r/2}$, and $b_N,c_N$ and $d_N$ are bounded, so \eqref{eq:forBounded} (where $B_{2r}$ is replaced by $B_{r/2}$) is applicable to $w^+$; therefore, for $x\in F_N$, where $F_N\subseteq B_{r/16}$ has full measure,
\[
w_N^+(x)\leq\sup_{B_{r/16}}w_N^+\leq C\left(\fint_{B_{r/2}}|w_N^+|^2\right)^{\frac{1}{2}}\leq C\left(\fint_{B_{r/2}}u^2\right)^{\frac{1}{2}}+C\left(\fint_{B_{r/2}}v_N^2\right)^{\frac{1}{2}},
\]
where $C$ depends on $n,\lambda$ and $\|A\|_{\infty}$. Therefore, for all $x\in F_N$,
\[
u(x)=v_N(x)+w_N(x)\leq v_N(x)+C\left(\fint_{B_{r/2}}u^2\right)^{\frac{1}{2}}+C\left(\fint_{B_{r/2}}v_N^2\right)^{\frac{1}{2}}\leq v_N(x)+C\left(\fint_{B_{2r}}u^2\right)^{\frac{1}{2}},
\]
where we used the Sobolev inequality and \eqref{eq:nablaVn} for the last estimate.

Let now $F_0=F\cap\bigcap_{N=1}^{\infty}F_N$, then $F_0\subseteq B_{r/16}$ has full measure, and if $x\in F_0$, then letting $N'\to\infty$ in the previous estimate, \eqref{eq:Convergences} implies that
\begin{equation}\label{eq:v0}
u(x)\leq\limsup_{N'\to\infty}v_{N'}(x)+C\left(\fint_{B_{2r}}u^2\right)^{\frac{1}{2}}=v_0(x)+C\left(\fint_{B_{2r}}u^2\right)^{\frac{1}{2}},
\end{equation}
for all $x\in F_0$. Finally, note that $v_{N'}$ is a subsolution to
\[
-\dive(A\nabla v_N+b_Nv_N)+c_N\nabla v_N+d_Nv_N\leq -\dive((b_N-b)u)+(c_N-c)\nabla u+(d_N-d)u
\]
in $B_{r/2}$, and since $b_{N'}\to b_N$ and $c_{N'}\to c_N$ strongly in $L^2(B_{r/2})$, while $d_{N'}\to d_N$ strongly in $L^{\frac{n}{2}}(B_{r/2})$, using \eqref{eq:Convergences} and the variational formulation of subsolutions \eqref{eq:subsolDfn} we obtain that $v_0$ is a $W_0^{1,2}(B_{2r})$ subsolution to
\[
-\dive(A\nabla v_0+bv_0)+c\nabla v_0+dv_0\leq 0.
\]
Hence, since $\theta'\leq\beta_{n,\lambda,\theta_{n,\lambda}}$, Proposition~\ref{MaxPrincipleC} implies that $v_0\leq 0$ in $B_{r/2}$, and plugging in \eqref{eq:v0} and covering $B_r$ with balls of radius $r/16$ completes the proof.
\end{proof}

\subsection{The second step: \texorpdfstring{$b$}{b} or \texorpdfstring{$c$}{c} have large norms}

We now turn to scale invariant estimates with ``good" constants when $d$ is small, and either $b$ or $c$ are small as well. We first consider the case of small $c$ and assume that the right hand side is identically $0$, for simplicity; the terms on the right hand side will be added in Proposition~\ref{MoserB}.

\begin{lemma}\label{LocalBoundSmallc}
Let $A$ be uniformly elliptic and bounded in $B_{2r}$, with ellipticity $\lambda$, and $b\in L^{n,1}(B_{2r})$ with $\|b\|_{n,1}\leq M$.

There exists $\overline{\theta}=\overline{\theta}_{n,\lambda,M}>0$ such that, if $c\in L^{n,\infty}(B_{2r})$ and $d\in L^{\frac{n}{2},1}(B_{2r})$ with $\|c\|_{n,\infty}<\overline{\theta}$ and $\|d\|_{\frac{n}{2},1}<\overline{\theta}$, then for any subsolution $u\in W^{1,2}(B_{2r})$ to $-\dive(A\nabla u+bu)+c\nabla u+du\leq 0$, we have
\begin{equation}\label{eq:localLargeBSquare}
\sup_{B_r}u\leq C\left(\fint_{B_{2r}}|u^+|^2\right)^{\frac{1}{2}},
\end{equation}
where $C$ depends on $n,\lambda,\|A\|_{\infty}$ and $M$.
\end{lemma}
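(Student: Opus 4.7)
The plan is to reduce Lemma~\ref{LocalBoundSmallc} to Lemma~\ref{localAllSmall} by a finite induction that peels off small pieces of $b$ one at a time, in the spirit of the argument behind \cite[Propositions 3.4 and 7.8]{SakAPDE}. As in the proof of Lemma~\ref{localAllSmall}, I would first assume $u\geq 0$ by replacing $u$ with $u^+$ (via \cite[Theorem 3.5]{StampacchiaDirichlet}, which applies once $\overline{\theta}\leq\theta_{n,\lambda}$ so that $\mathcal{L}$ is coercive by Lemma~\ref{Coercivity}). Next, using super-level sets of $|b|$, I would decompose
\[
b=b^{(1)}+\cdots+b^{(N)}
\]
into pieces with pairwise disjoint supports and $\|b^{(i)}\|_{L^{n,1}(B_{2r})}\leq\sigma$, for a small parameter $\sigma=\sigma_{n,\lambda,\|A\|_{\infty},M}$ to be fixed at the end. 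By Lemma~\ref{NormDisjoint} the number of pieces satisfies $N\sigma^n\leq M^n$, so $N$ depends only on the allowed data, and moreover the partial sums $B_k:=b^{(1)}+\cdots+b^{(k)}$ all satisfy $\|B_k\|_{L^{n,1}(B_{2r})}\leq M$.

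Writing $\mathcal{L}_k w=-\dive(A\nabla w+B_kw)+c\nabla w+dw$, I would prove the estimate \eqref{eq:localLargeBSquare} for every $W^{1,2}(B_{2r})$ subsolution of $\mathcal{L}_k u\leq 0$ by induction on $k\in\{0,\dots,N\}$. The base case $k=0$ is Lemma~\ref{localAllSmall} (with $b\equiv 0$), while the case $k=N$ is the desired Lemma~\ref{LocalBoundSmallc}. For the inductive step, given $u\geq 0$ with $\mathcal{L}_{k+1}u\leq 0$, I rewrite this as $\mathcal{L}_k u\leq\dive(b^{(k+1)} u)$, pick radii $\eta<\eta'\leq 1/2$, and let $v\in W^{1,2}_0(B_{\eta' r})$ solve
\[
\mathcal{L}_k v=\dive(b^{(k+1)}u)
\]
by Lax-Milgram on the coercive operator $\mathcal{L}_k$ (the unboundedness of $b^{(k+1)},c,d$ is handled by the truncation-and-weak-limit argument already used in the last part of the proof of Lemma~\ref{localAllSmall}). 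Applying Proposition~\ref{MaxPrincipleC} to $\mathcal{L}_k$ with $\|B_k\|_{L^{n,1}}\leq M$ and $\overline{\theta}\leq\beta_{n,\lambda,M}$, together with \eqref{eq:Holder}, gives
\[
\sup_{B_{\eta' r}}|v|\leq C_M\|b^{(k+1)}u\|_{L^{n,1}(B_{\eta' r})}\leq C_n C_M\sigma\sup_{B_{\eta' r}}u,
\]
while $w:=u-v$ is a subsolution of $\mathcal{L}_k w\leq 0$ to which the induction hypothesis applies. Estimating $\|v\|_{L^2(B_{\eta' r})}$ by Sobolev embedding together with \eqref{eq:SecondInLemma} to control it by $\|u\|_{L^2(B_{2r})}$, and combining with $w^+\leq u+|v|$, one arrives at
\[
\sup_{B_{\eta r}}u\leq C_M\sigma\sup_{B_{\eta' r}}u+C(\eta,\eta',k,M)\left(\fint_{B_{2r}}|u^+|^2\right)^{1/2}.
\]
Choosing $\sigma$ once and for all so that $C_M\sigma\leq 1/2$ (a single condition that works at every step, since $\|B_k\|_{L^{n,1}}\leq M$ uniformly in $k$), the dyadic radius iteration $\eta_j=1/2-4^{-j}$ from the proof of Lemma~\ref{localAllSmall} absorbs the first term into the left-hand side and closes the inductive step. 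After $N$ iterations one obtains \eqref{eq:localLargeBSquare} for $\mathcal{L}_N=\mathcal{L}$.

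The main obstacle is keeping the intermediate constants $C_k$ under control across the $N$ induction steps: this is resolved by the observation above that $\sigma$ can be selected before the induction begins, because $C_M$ (and hence the smallness required of $\sigma$) depends only on $M$, not on $k$. Once $\sigma$ is fixed, $N$ is fixed, and each induction step multiplies the constant by a factor depending only on $n,\lambda,\|A\|_{\infty},M$; consequently the final constant $C_N$ also depends only on this data. A secondary technical obstacle — justifying the existence of $v$ when $b^{(k+1)}$ is unbounded and making sense of the limit equation — is handled by the same approximation-and-weak-limit strategy used in the last paragraphs of the proof of Lemma~\ref{localAllSmall}, combined with the subsolution formulation \eqref{eq:subsolDfn}.
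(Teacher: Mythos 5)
Your decomposition of $b$ into disjointly-supported pieces $b^{(1)}+\cdots+b^{(N)}$ with small individual norms, followed by a "peel one piece off at a time" induction, is a genuinely different organization from the paper: the paper never splits $b$ at all, but instead inducts on the dyadic scale $\|b\|_{L^{n,1}(B_{2r})}\leq 2^{N/n}\theta'$ and, at each step, applies the inductive hypothesis \emph{on smaller sub-balls} where the localized norm $\|b\|_{L^{n,1}(\text{small ball})}$ is guaranteed to have dropped by a factor of $2^{-1/n}$ (via a dichotomy between $B_{3r/2}$ and the annulus $B_{2r}\setminus B_{3r/2}$, enforced by Lemma~\ref{NormDisjoint}). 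This is not just stylistic: the paper's geometry ensures that \emph{every} elementary estimate in the proof is invoked in a regime where the drift coefficient is small, whereas your scheme forces you to work directly with the intermediate operator $\mathcal{L}_k$, which has $\|B_k\|_{n,1}$ as large as $M$.

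That is where the proposal breaks down. You cite \eqref{eq:SecondInLemma} to bound $\|v\|_{L^2(B_{\eta'r})}$ by $\|u\|_{L^2(B_{2r})}$, and you invoke Lax--Milgram on "the coercive operator $\mathcal{L}_k$." But Lemma~\ref{Coercivity} gives coercivity, the energy estimate \eqref{eq:SecondInLemma}, and the Caccioppoli inequality \eqref{eq:Cacciopoli} \emph{only} when $\|b\|_{n,1}\leq\theta_{n,\lambda}$; for $\|B_k\|_{n,1}$ comparable to $M$ none of these hold (and solvability has to go through Fredholm, as in the proof of Proposition~\ref{MoserB}, not Lax--Milgram). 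Without \eqref{eq:SecondInLemma} the only available control on $v$ is the $L^\infty$ bound from Proposition~\ref{MaxPrincipleC}, namely $\|v\|_{L^\infty(B_{\eta'r})}\leq C_M\sigma\sup_{B_{\eta'r}}u$. Feeding this into $\|w^+\|_{L^2(B_{\eta'r})}\leq\|u^+\|_{L^2(B_{\eta'r})}+\|v\|_{L^2(B_{\eta'r})}$ and then into the induction hypothesis produces, after the covering argument, a term of the form
\[
\frac{C_k\sigma}{(\eta'-\eta)^{n/2}}\,\sup_{B_{\eta'r}}u
\]
on the right-hand side, and along the dyadic radii $\eta_j=\tfrac12-4^{-j}$ the prefactor $(\eta_{j+1}-\eta_j)^{-n/2}\sim 4^{jn/2}$ blows up; no choice of $\sigma$ made once and for all can absorb a geometrically growing coefficient in front of $\sup u$. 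Note that the analogous step in the proof of Lemma~\ref{localAllSmall} works precisely because coercivity gives $\|v\|_{L^{2^*}(B_{\eta'r})}\leq C\|u\|_{L^{2^*}(B_{\eta'r})}$ (with $C$ independent of $\eta,\eta'$), so the blow-up factor hits only the harmless $\|u^+\|_{L^{2^*}}$ term and never $\sup u$. The paper's dichotomy argument sidesteps all of this by never asking for an energy estimate for an operator whose drift has large norm. If you want to salvage the "peeling" scheme, you would at minimum need to replace the appeal to \eqref{eq:SecondInLemma}/\eqref{eq:Cacciopoli} by an a priori bound on $\|\nabla v\|_{L^2}$ for the non-coercive operator $\mathcal{L}_k$ with constant depending only on $n,\lambda,M$ (not on $B_k$ itself), and it is not clear such a bound holds.
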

\begin{proof}
We will proceed by induction on $M$. Consider the $\theta_{n,\lambda}'$ and the constant $C_0=C_{n,\lambda,\|A\|_{\infty}}\geq 1$ that appear in Lemma~\ref{localAllSmall}. In addition, for any integer $N\geq 0$, set $C'_{n,\lambda,N}=C_{n,\lambda,2^{N/n}\theta'_{n,\lambda}}\geq 1$, where the last constant appears in Proposition~\ref{MaxPrincipleC}.

We claim that, if $\|b\|_{L^{n,1}(B_{2r})}\leq 2^{N/n}\theta_{n,\lambda}'$, then there exists $\overline{\theta}_{n,\lambda,N}>0$ such that, if we have that $\|c\|_{L^{n,\infty}(B_{2r})}<\overline{\theta}_{n,\lambda,N}$ and $\|d\|_{L^{\frac{n}{2},1}(B_{2r})}<\overline{\theta}_{n,\lambda,N}$, then
\begin{equation}\label{eq:forInduction}
\sup_{B_r}u\leq 8^{\frac{nN}{2}}C_0\prod_{i=0}^NC'_{n,\lambda,i}\left(\fint_{B_{2r}}|u^+|^2\right)^{\frac{1}{2}}.
\end{equation}
For $N=0$, letting $\overline{\theta}_{n,\lambda,0}=\theta'_{n,\lambda}$, the previous estimate holds from Lemma~\ref{localAllSmall}.

Assume now that this estimate holds for some integer $N\geq 0$, for some constant $\overline{\theta}_{n,\lambda,N}$. From Proposition~\ref{MaxPrincipleC} there exists $\beta'_{n,\lambda,N}=\beta_{n,\lambda,2^{N/n}\theta'_{n,\lambda}}>0$ such that, if $\Omega\subseteq\bR^n$ is a domain, $A'$ is elliptic in $\Omega$ with ellipticity $\lambda$, $\|b'\|_{L^{n,1}(\Omega)}\leq 2^{N/n}\theta'_{n,\lambda}$, $\|c'\|_{L^{n,\infty}(\Omega)}<\beta'_{n,\lambda,N}$ and $\|d'\|_{L^{\frac{n}{2},1}(\Omega)}<\beta'_{n,\lambda,N}$, then for any subsolution $v\in Y^{1,2}(\Omega)$ to $-\dive(A'\nabla v+b'v)+c'\nabla v+d'v\leq 0$ in $\Omega$, we have that
\[
\sup_{\Omega}v\leq C'_{n,\lambda,N}\sup_{\partial\Omega}v^+.
\]
We then set $\overline{\theta}_{n,\lambda,{N+1}}=\min\{\overline{\theta}_{n,\lambda,N},\beta'_{n,\lambda,N+1}\}$, and assume that
\begin{equation}\label{eq:inductionAssumptions}
\|b\|_{L^{n,1}(B_{2r})}\leq 2^{(N+1)/n}\theta'_{n,\lambda},\qquad \|c\|_{L^{n,\infty}(B_{2r})}<\overline{\theta}_{n,\lambda,{N+1}},\quad\text{and}\quad\|d\|_{L^{\frac{n}{2},1}(B_{2r})}<\overline{\theta}_{n,\lambda,{N+1}}.
\end{equation}
We will show that, in this case, \eqref{eq:forInduction} holds for $N+1$. To show this, we distinguish between two cases: $\|b\|_{L^{n,1}(B_{3r/2})}\leq 2^{N/n}\theta'_{n,\lambda}$, and $\|b\|_{L^{n,1}(B_{3r/2})}>2^{N/n}\theta'_{n,\lambda}$.

In the first case, let $x\in B_r$. Then, since $\overline{\theta}_{n,\lambda,{N+1}}\leq\overline{\theta}_{n,\lambda,N}$ and $B_{r/2}(x)\subseteq B_{3r/2}$, we have that
\[
\|b\|_{L^{n,1}(B_{r/2}(x))}\leq 2^{N/n}\theta'_{n,\lambda},\qquad \|c\|_{L^{n,\infty}(B_{r/2}(x))}<\overline{\theta}_{n,\lambda,N},\quad\text{and}\quad\|d\|_{L^{\frac{n}{2},1}(B_{r/2}(x))}<\overline{\theta}_{n,\lambda,N}.
\]
Therefore, from \eqref{eq:forInduction} for $N$ (in the ball $B_{r/2}(x)$ instead of $B_{2r}$), we have
\begin{align*}
\sup_{B_{r/4}(x)}u&\leq 8^{\frac{nN}{2}}C_0\prod_{i=0}^NC'_{n,\lambda,i}\left(\fint_{B_{r/2}(x)}|u^+|^2\right)^{\frac{1}{2}}\\
&\leq 8^{\frac{nN}{2}}C_0\prod_{i=0}^NC'_{n,\lambda,i}2^n\left(\fint_{B_{2r}}|u^+|^2\right)^{\frac{1}{2}}\leq 8^{\frac{n(N+1)}{2}}C_0\prod_{i=0}^{N+1}C'_{n,\lambda,i}\left(\fint_{B_{2r}}u^2\right)^{\frac{1}{2}},
\end{align*}
where we used that $C'_{n,\lambda,N+1}\geq 1$ for the last step. So, \eqref{eq:forInduction} holds for $N+1$ in this case.

In the second case, let $y\in\partial B_{7r/4}$. Then $B_{r/4}(y)\subseteq B_{2r}\setminus B_{3r/2}$, therefore, from Lemma~\ref{NormDisjoint},
\[
\|b\|_{L^{n,1}(B_{r/4}(y))}^n\leq \|b\|_{L^{n,1}(B_{2r})}^n-\|b\|_{L^{n,1}(B_{3r/2})}^n< 2^{N+1}(\theta'_{n,\lambda})^n-2^N(\theta'_{n,\lambda})^n=(2^{N/n}\theta'_{n,\lambda})^n.
\]
Moreover, from \eqref{eq:inductionAssumptions}, we have that $\|c\|_{L^{n,\infty}(B_{r/4}(y))}<\overline{\theta}_{n,\lambda,N}$ and $\|d\|_{L^{\frac{n}{2},1}(B_{r/4}(y))}<\overline{\theta}_{n,\lambda,N}$, hence \eqref{eq:forInduction} for $N$ (in the ball $B_{r/4}(y)$ instead of $B_{2r}$) implies that
\[
\sup_{B_{r/8}(y)}u\leq 8^{\frac{nN}{2}}C_0\prod_{i=0}^NC'_{n,\lambda,i}\left(\fint_{B_{r/4}(y)}|u^+|^2\right)^{\frac{1}{2}}\leq 8^{\frac{n(N+1)}{2}}C_0\prod_{i=0}^NC'_{n,\lambda,i}\left(\fint_{B_{2r}}u^2\right)^{\frac{1}{2}}.
\]
Then, the last estimate, \eqref{eq:inductionAssumptions} and Proposition~\ref{MaxPrincipleC} show that
\[
\sup_{B_r}u\leq C'_{n,\lambda,N+1}\sup_{\partial B_{7r/4}}u\leq C_{n,\lambda,N+1}'\cdot 8^{\frac{n(N+1)}{2}}C_0\prod_{i=1}^NC'_{n,\lambda,i}\left(\fint_{B_{2r}}|u^+|^2\right)^{\frac{1}{2}},
\]
which shows that \eqref{eq:forInduction} for $N+1$ in this case as well.

Therefore, \eqref{eq:forInduction} holds for any $N\in\mathbb N$, which completes the proof.
\end{proof}

Finally, we show Moser's estimate allowing right hand sides to the equation, and considering also different $L^p$ norms on the right hand side of the estimate.

\begin{prop}\label{MoserB}
Let $A$ be uniformly elliptic and bounded in $B_{2r}$, with ellipticity $\lambda$. Let also $b\in L^{n,1}(B_{2r})$ with $\|b\|_{n,1}\leq M$, and $p>0$, $f\in L^{n,1}(B_{2r})$, $g\in L^{\frac{n}{2},1}(B_{2r})$.

There exists $\e=\e_{n,\lambda,M}>0$ such that, if $c\in L^{n,\infty}(B_{2r})$ and $d\in L^{\frac{n}{2},1}(B_{2r})$ with $\|c\|_{n,\infty}<\e$ and $\|d\|_{\frac{n}{2},1}<\e$, then for any subsolution $u\in W^{1,2}(B_{2r})$ to $-\dive(A\nabla u+bu)+c\nabla u+du\leq -\dive f+g$, we have that
\begin{equation}\label{eq:MoserForB}
\sup_{B_r}u\leq C\left(\fint_{B_{2r}}|u^+|^p\right)^{\frac{1}{p}}+C\|f\|_{L^{n,1}(B_{2r})}+C\|g\|_{L^{\frac{n}{2},1}(B_{2r})},
\end{equation}
where $C$ depends on $n,p,\lambda,\|A\|_{\infty}$ and $M$.
\end{prop}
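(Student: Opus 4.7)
My plan is to reduce to the homogeneous $L^2$ estimate in Lemma~\ref{LocalBoundSmallc} in two moves: first upgrade that lemma to an $L^p$ bound for every $p>0$, and then solve away the data $f,g$ using the maximum principle in Proposition~\ref{MaxPrincipleC}. To set up, I choose $\e\leq\min\{\overline\theta_{n,\lambda,M},\beta_{n,\lambda,M}\}$ so that both Lemma~\ref{LocalBoundSmallc} and Proposition~\ref{MaxPrincipleC} are applicable on $B_{2r}$, and, by monotonicity of Lorentz seminorms under restriction (since $(f\chi_E)^*\leq f^*$), on every concentric subball.

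For the $L^p$ upgrade, I would apply Lemma~\ref{LocalBoundSmallc} on each ball $B_R(x)\subset B_{2r}$ and evaluate at $x\in B_\rho$ with $R=\rho'-\rho$ to get, for any $W^{1,2}(B_{2r})$ subsolution $w$ of the homogeneous equation,
\[
\sup_{B_\rho}w\leq C(\rho'-\rho)^{-n/2}\|w^+\|_{L^2(B_{\rho'})}\qquad\text{for every }0<\rho<\rho'\leq 2r.
\]
Writing $\Phi(\rho)=\sup_{B_\rho}w^+$ and using $\|w^+\|_{L^2(B_{\rho'})}^2\leq\Phi(\rho')^{2-p}\|w^+\|_{L^p(B_{\rho'})}^p$, Young's inequality with conjugate exponents $\tfrac{2}{2-p},\tfrac{2}{p}$ absorbs $\tfrac12\Phi(\rho')$ on the right, and the standard iteration lemma (applied on $[r,2r-\delta]$ and then $\delta\downarrow 0$, using that $\Phi$ is finite on compact subintervals of $[r,2r)$) produces
\[
\sup_{B_r}w\leq C\Bigl(\fint_{B_{2r}}|w^+|^p\Bigr)^{1/p}\qquad\text{for every }p\in(0,2),
\]
and the case $p\geq 2$ reduces to $p=2$ by H\"older.

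To handle the inhomogeneity I would take $v\in W_0^{1,2}(B_{2r})$ solving $-\dive(A\nabla v+bv)+c\nabla v+dv=-\dive f+g$ in $B_{2r}$; then Proposition~\ref{MaxPrincipleC} applied to both $v$ and $-v$ (each with zero trace) gives $\sup_{B_{2r}}|v|\leq CK_0$ where $K_0:=\|f\|_{L^{n,1}(B_{2r})}+\|g\|_{L^{n/2,1}(B_{2r})}$. The function $w:=u-v\in W^{1,2}(B_{2r})$ is then a homogeneous subsolution, and combining the $L^p$ bound with the pointwise inequality $w^+\leq u^++|v|$ and the quasi-triangle inequality for $(\fint|\cdot|^p)^{1/p}$ yields
\[
\sup_{B_r}u\leq\sup_{B_r}w+\sup_{B_r}|v|\leq C\Bigl(\fint_{B_{2r}}|u^+|^p\Bigr)^{1/p}+CK_0,
\]
which is \eqref{eq:MoserForB}.

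The main obstacle is the existence of $v$: since $\|b\|_{n,1}\leq M$ is not assumed small, the bilinear form is not coercive and Lax--Milgram is unavailable. I would resolve this exactly as at the end of the proof of Lemma~\ref{localAllSmall}: truncate the coefficients to bounded $b_N,c_N,d_N$, solve the approximate Dirichlet problem by classical theory (e.g., \cite[Chapter 8]{Gilbarg}), and extract a weak $W_0^{1,2}$ limit. The crucial point is that the $L^\infty$ bound on each $v_N$ produced by Proposition~\ref{MaxPrincipleC} depends only on $n,\lambda,M$ and the Lorentz norms of $f,g$, hence is uniform in $N$ and survives the limit. (Alternatively, a Fredholm alternative argument works, with triviality of the kernel given precisely by Proposition~\ref{MaxPrincipleC} applied to the zero-boundary-data problem.) Once $v$ is in hand, the rest of the argument is the routine interpolation described above.
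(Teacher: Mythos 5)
Your proposal is correct and follows essentially the same route as the paper's proof: solve away the inhomogeneity by finding $v\in W_0^{1,2}(B_{2r})$ (via Fredholm, with uniqueness supplied by Proposition~\ref{MaxPrincipleC}), bound $\sup|v|$ by $K_0$ using Proposition~\ref{MaxPrincipleC}, apply Lemma~\ref{LocalBoundSmallc} to the homogeneous subsolution $w=u-v$ for $p=2$, and then pass to general $p$ by the standard Young's-inequality/iteration-lemma interpolation. The paper handles the case $p<2$ by citing the same argument from Giaquinta, and obtains $v$ by adding a large zeroth-order term to make the form coercive and then invoking Lax--Milgram plus the Fredholm alternative --- which is precisely your preferred alternative to the truncation approach.
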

\begin{proof}
Consider the $\beta_{n,\lambda,M}$ from Proposition~\ref{MaxPrincipleC}. If $\|c\|_{n,\infty}<\beta_{n,\lambda,M}$ and $\|d\|_{\frac{n}{2},1}<\beta_{n,\lambda,M}$, any solution $u\in W_0^{1,2}(B_{2r})$ to the equation $-\dive(A\nabla u+bu)+c\nabla u+du=0$ in $B_{2r}$ should be identically $0$, from Proposition~\ref{MaxPrincipleC}. Hence, adding a term of the form $+Lu$ to the operator, for some large $L>0$ depending only on $n,\lambda,M$, the operator becomes coercive, and a combination of the Lax-Milgram theorem and the Fredholm alternative (as in \cite[Theorem 4, pages 303-305]{Evans}, for example) show that there exists a unique $v\in W_0^{1,2}(B_{2r})$ such that
\[
-\dive(A\nabla v+bv)+c\nabla v+dv=-\dive f+g,
\]
in $B_{2r}$. Then, Proposition~\ref{MaxPrincipleC} implies that
\begin{equation}\label{eq:sup1}
\sup_{B_{2r}}|v|\leq C\|f\|_{L^{n,1}(B_{2r})}+C\|g\|_{L^{\frac{n}{2},1}(B_{2r})},
\end{equation}
where $C$ depends on $n,\lambda$ and $M$.

Consider now the $\overline{\theta}_{n,\lambda,M}$ from Lemma~\ref{LocalBoundSmallc} and set $\e=\min\{\beta_{n,\lambda,M},\overline{\theta}_{n,\lambda,M}\}$. Then, assuming that $\|c\|_{n,\infty}<\e$ and $\|d\|_{\frac{n}{2},1}<\e$, since $w=u-v$ is a subsolution to $-\dive(A\nabla w+bw)+c\nabla w+dw\leq 0$, \eqref{eq:localLargeBSquare} implies that
\begin{equation}\label{eq:sup2}
\sup_{B_r}w\leq C\left(\fint_{B_{2r}}|w^+|^2\right)^{\frac{1}{2}},
\end{equation}
where $C$ depends on $n,\lambda,\|A\|_{\infty}$ and $M$. Then, \eqref{eq:MoserForB} for $p=2$ follows adding \eqref{eq:sup1} and \eqref{eq:sup2}.

Finally, in the case $p\geq 2$, \eqref{eq:MoserForB} follows from H{\"o}lder's inequality, while in the case $p\in(0,2)$, the proof follows from the argument on \cite[pages 80-82]{Giaquinta}.
\end{proof}

We now turn to the case when $c\in L^{n,q}$ with $q<\infty$ is allowed to have large norm. 

\begin{lemma}\label{LocalBoundSmallb}
Let $A$ be uniformly elliptic and bounded in $B_{2r}$, with ellipticity $\lambda$. Let also $q<\infty$ and $c_1\in L^{n,q}(B_{2r})$ with $\|c_1\|_{n,q}\leq M$.

There exist $\xi=\xi_{n,\lambda}>0$ and $\zeta=\zeta_{n,q,\lambda,M}>0$ such that, if $b\in L^{n,1}(B_{2r})$, $c_2\in L^{n,\infty}(B_{2r})$ and $d\in L^{\frac{n}{2},1}(B_{2r})$ with $\|b\|_{n,1}<\zeta$, $\|c_2\|_{n,\infty}<\xi$ and $\|d\|_{\frac{n}{2},1}<\zeta$, then for any subsolution $u\in W^{1,2}(B_{2r})$ to $-\dive(A\nabla u+bu)+(c_1+c_2)\nabla u+du\leq 0$, we have that
\[
\sup_{B_{r/4}}u\leq C\left(\fint_{B_{2r}}|u^+|^2\right)^{\frac{1}{2}},
\]
where $C$ depends on $n,q,\lambda,\|A\|_{\infty}$ and $M$.
\end{lemma}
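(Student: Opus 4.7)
The plan is to mirror the inductive argument of Lemma~\ref{LocalBoundSmallc}, exchanging the role of $b$ for that of $c_1$ and replacing Proposition~\ref{MaxPrincipleC} by Proposition~\ref{MaxPrincipleB}. Set $s=\max\{n,q\}$, so that by Lemma~\ref{NormDisjoint} the quantity $\|c_1\|_{L^{n,q}}^{s}$ is superadditive on disjoint sets; this is the property that will allow the induction to reach any prescribed norm $M$. I would choose $\xi_{n,\lambda}=\min\{\nu_{n,\lambda},\,\theta'_{n,\lambda}/2\}$, where $\nu_{n,\lambda}$ comes from Lemma~\ref{MainEstimate} (so that Proposition~\ref{MaxPrincipleB} remains applicable to the decomposition $c=c_1+c_2$) and $\theta'_{n,\lambda}$ from Lemma~\ref{localAllSmall}. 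I would also fix an auxiliary base-level threshold $\xi_0=\xi_0(n,q,\lambda)$ satisfying $C_{n,q}\xi_0\leq\theta'_{n,\lambda}/2$, where $C_{n,q}$ is the constant in \eqref{eq:LorentzNormsRelations}; with these choices, $\|c_1\|_{n,q}\leq\xi_0$ together with $\|c_2\|_{n,\infty}<\xi$ automatically forces $\|c_1+c_2\|_{n,\infty}\leq\theta'_{n,\lambda}$.

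The inductive claim I would establish is: for every integer $N\geq 0$ there exist $\zeta_N=\zeta_N(n,q,\lambda)$ and $K_N=K_N(n,q,\lambda,\|A\|_\infty)$ such that, whenever $\|c_1\|_{L^{n,q}(B_{2r})}^{s}\leq 2^N\xi_0^{s}$, $\|c_2\|_{n,\infty}<\xi$, and $\|b\|_{n,1},\|d\|_{\frac{n}{2},1}<\zeta_N$, every $W^{1,2}$ subsolution satisfies $\sup_{B_{r/4}}u\leq K_N\bigl(\fint_{B_{2r}}|u^+|^2\bigr)^{1/2}$. The base case $N=0$ follows directly from Lemma~\ref{localAllSmall} via the choices of $\xi$ and $\xi_0$. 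For the inductive step $N\to N+1$, I would split into two subcases as in the proof of Lemma~\ref{LocalBoundSmallc}: if $\|c_1\|_{L^{n,q}(B_{3r/2})}^{s}\leq 2^N\xi_0^{s}$, then for each $x\in B_r$ the ball $B_{r/2}(x)\subseteq B_{3r/2}$ meets all inductive hypotheses, and the inductive estimate applied on $B_{r/2}(x)$, combined with a covering of the target ball by sufficiently many shrunk balls, closes this case. Otherwise, Lemma~\ref{NormDisjoint} applied to the disjoint sets $B_{3r/2}$ and $B_{r/4}(y)$ for $y\in\partial B_{7r/4}$ gives $\|c_1\|_{L^{n,q}(B_{r/4}(y))}^{s}<2^N\xi_0^{s}$; the inductive hypothesis on these peripheral balls bounds $u$ uniformly on $\partial B_{7r/4}$, and Proposition~\ref{MaxPrincipleB} on $B_{7r/4}$ transfers this boundary bound to the interior, provided $\zeta_{N+1}$ is chosen to be at most the constant $\gamma_{n,q,\lambda,2^{(N+1)/s}\xi_0}$ produced by that proposition.

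Given $M$, one picks $N$ with $2^N\xi_0^{s}\geq M^{s}$, so that any admissible $c_1$ lies at or below induction level $N$; setting $\zeta_{n,q,\lambda,M}=\min_{0\leq k\leq N}\zeta_k$ and $C=K_N$ then yields the lemma. The main obstacle I expect is the bookkeeping: unlike Lemma~\ref{LocalBoundSmallc}, where $\|b\|_{n,1}^{n}$ was the natural superadditive quantity and the exponent was fixed equal to $n$, here $s=\max\{n,q\}$ varies with $q$, and one must verify that the $2^{N/s}\xi_0$ ladder interacts correctly with the $M$-dependent constant $\gamma_{n,q,\lambda,\cdot}$ from Proposition~\ref{MaxPrincipleB}. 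In particular, one has to ensure that the smallness required of $b$ and $d$ at every level $k\leq N$ can be absorbed into a single threshold $\zeta$ depending only on $n,q,\lambda,M$, rather than deteriorating with the induction depth.
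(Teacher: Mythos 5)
Your proposal follows essentially the same route as the paper: an induction on the size of $\|c_1\|_{n,q}$ using the superadditivity from Lemma~\ref{NormDisjoint}, with Lemma~\ref{localAllSmall} as the base case and Proposition~\ref{MaxPrincipleB} (rather than Proposition~\ref{MaxPrincipleC}) driving the inductive step; your use of $s=\max\{n,q\}$ makes explicit what the paper handles by the implicit reduction to $q\geq n$. One small slip in your base case: you need an extra factor of $C_n^{-1}$ (the quasi-triangle constant for $\|\cdot\|_{n,\infty}$ from \eqref{eq:Seminorm}) in both $\xi$ and $\xi_0$, since $\|c_1+c_2\|_{n,\infty}\leq C_n C_{n,q}\|c_1\|_{n,q}+C_n\|c_2\|_{n,\infty}$; with your present choices you only get $\|c_1+c_2\|_{n,\infty}\leq C_n\theta'_{n,\lambda}$, which is not $\leq\theta'_{n,\lambda}$ if $C_n>1$. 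The paper's choice $\xi_{n,\lambda}=\tfrac{1}{2C_n}\min\{\nu_{n,\lambda},\theta'_{n,\lambda}\}$ incorporates exactly this factor.
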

\begin{proof}
Let $C_n\geq 1$ be such that $\|h_1+h_2\|_{n,\infty}\leq C_n\|h_1\|_{n,\infty}+C_n\|h_2\|_{n,\infty}$ for all $h_1,h_2\in L^{n,\infty}$ (from \eqref{eq:Seminorm}), and $C_{n,q}\geq 1$ be such that $\|h\|_{n,\infty}\leq C_{n,q}\|h\|_{n,q}$ for all $h\in L^{n,q}$ (from \eqref{eq:LorentzNormsRelations}).
	
Set
\[
\xi_{n,\lambda}=\frac{1}{2C_n}\min\left\{\nu_{n,\lambda},\theta'_{n,\lambda}\right\}>0,
\]
where $\nu_{n,\lambda}$ and $\theta'_{n,\lambda}$ appear in Proposition~\ref{MaxPrincipleC} and Lemma~\ref{localAllSmall}, respectively. For $N\geq 0$, set also $C'_{n,q,\lambda,N}=C_{n,q,\lambda,2^{N/q}C_{n,q}^{-1}\xi_{n,\lambda}}>1$, where the last constant appears in Proposition~\ref{MaxPrincipleB}, and consider the constant $C_0=C_{n,\lambda,\|A\|_{\infty}}\geq 1$ that appears in Lemma~\ref{localAllSmall}.

We claim that, for any integer $N\geq 0$, if $\|c_1\|_{n,q}\leq 2^{N/q}C_{n,q}^{-1}\xi_{n,\lambda}$, then there exists $\zeta_{n,q,\lambda,N}$ such that, if $\|b\|_{n,1}<\zeta_{n,q,\lambda,N}$, $\|c_2\|_{n,\infty}<\xi_{n,\lambda}$ and $\|d\|_{\frac{n}{2},1}<\zeta_{n,q,\lambda,N}$, then
\begin{equation}\label{eq:forInduction2}
\sup_{B_{r/4}}u\leq 8^{\frac{nN}{2}}C_0\prod_{i=0}^NC'_{n,q,\lambda,i}\left(\fint_{B_{2r}}u^2\right)^{\frac{1}{2}}.
\end{equation}
For $N=0$ we can take $\zeta_{n,q,\lambda,0}=\xi_{n,\lambda}$, since we then have that
\[
\|c\|_{n,\infty}\leq C_nC_{n,q}\|c_1\|_{n,q}+C_n\|c_2\|_{n,\infty}\leq 2C_n\xi_{n,\lambda}\leq \theta'_{n,\lambda},
\]
and also $\|b\|_{n,1}\leq\theta'_{n,\lambda}$, $\|d\|_{\frac{n}{2},1}\leq\theta'_{n,\lambda}$, therefore \eqref{eq:forInduction2} for $N=0$ holds from Lemma~\ref{localAllSmall}.

Assume now that \eqref{eq:forInduction2} holds for some $N\geq 0$, and set $\zeta_{n,q,\lambda,{N+1}}=\min\{\zeta_{n,q,\lambda,N},\gamma'_{n,q,\lambda,N+1}\}$, where $\gamma'_{n,q,\lambda,N}=\gamma_{n,q,\lambda,2^{N/q}C_{n,q}^{-1}\xi_{n,\lambda}}$, and the $\gamma$ appears in Proposition~\ref{MaxPrincipleB}. We then continue as in the proof of the Lemma~\ref{LocalBoundSmallc}, using Lemma~\ref{NormDisjoint} for $q>n$ and Proposition~\ref{MaxPrincipleB} instead of Proposition~\ref{MaxPrincipleC}; this shows that \eqref{eq:forInduction2} holds for $N+1$ if $\|c_1\|_{n,q}\leq 2^{(N+1)/q}C_{n,q}^{-1}\xi_{n,\lambda}$, as long as $\|b\|_{n,1}<\zeta_{n,q,\lambda,N+1}$, $\|c_2\|_{n,\infty}<\xi_{n,\lambda}$ and $\|d\|_{\frac{n}{2},1}<\zeta_{n,q,\lambda,N+1}$, and this completes the proof.
\end{proof}

Finally, we add right hand sides and allow different $L^p$ norms.

\begin{prop}\label{MoserC}
Let $A$ be uniformly elliptic and bounded in $B_{2r}$, with ellipticity $\lambda$, and $q<\infty$, $c_1\in L^{n,q}(B_{2r})$ with $\|c_1\|_{n,q}\leq M$. Let also $p>0$ and $f\in L^{n,1}(B_{2r})$, $g\in L^{\frac{n}{2},1}(B_{2r})$.
	
There exist $\xi=\xi_{n,\lambda}>0$ and $\delta=\delta_{n,q,\lambda,M}>0$ such that, if $b\in L^{n,1}(B_{2r})$, $c_2\in L^{n,\infty}(B_{2r})$ and $d\in L^{\frac{n}{2},1}(B_{2r})$ with $\|b\|_{n,1}<\delta$, $\|c_2\|_{n,\infty}<\xi$ and $\|d\|_{\frac{n}{2},1}<\delta$, then for any subsolution $u\in W^{1,2}(B_{2r})$ to $-\dive(A\nabla u+bu)+(c_1+c_2)\nabla u+du\leq -\dive f+g$, we have that
\[
\sup_{B_r}u\leq C\left(\fint_{B_{2r}}|u^+|^p\right)^{\frac{1}{p}}+C\|f\|_{L^{n,1}(B_{2r})}+C\|g\|_{L^{\frac{n}{2},1}(B_{2r})},
\]
where $C$ depends on $n,p,q,\lambda,\|A\|_{\infty}$ and $M$.
\end{prop}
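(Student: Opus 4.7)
The plan is to mirror the two-step argument used in Proposition~\ref{MoserB}, replacing the ``small $c$'' max principle (Proposition~\ref{MaxPrincipleC}) by its companion for the ``large $c_1$ in $L^{n,q}$, small $c_2$, small $b,d$'' setting (Proposition~\ref{MaxPrincipleB}), and replacing Lemma~\ref{LocalBoundSmallc} by its counterpart Lemma~\ref{LocalBoundSmallb}. Concretely, let $\xi_{n,\lambda}$ be the constant from Lemma~\ref{LocalBoundSmallb}, let $\zeta_{n,q,\lambda,M}$ be the smallness constant in that same lemma, and let $\gamma_{n,q,\lambda,M}$ be the constant from Proposition~\ref{MaxPrincipleB}. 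I take
\[
\xi:=\xi_{n,\lambda},\qquad \delta:=\min\{\zeta_{n,q,\lambda,M},\,\gamma_{n,q,\lambda,M}\},
\]
and assume $\|b\|_{n,1}<\delta$, $\|c_2\|_{n,\infty}<\xi$, $\|d\|_{\frac{n}{2},1}<\delta$.

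The first step is to split off the inhomogeneity. Since Proposition~\ref{MaxPrincipleB} applies, any $v\in W_0^{1,2}(B_{2r})$ solving the homogeneous equation $-\dive(A\nabla v+bv)+(c_1+c_2)\nabla v+dv=0$ must vanish. Adding a large multiple $+Lv$ of the identity to coercivize the bilinear form (with $L=L_{n,q,\lambda,M}$) and invoking Lax--Milgram together with the Fredholm alternative, exactly as on \cite[pp.~303--305]{Evans}, yields a unique $v\in W_0^{1,2}(B_{2r})$ with
\[
-\dive(A\nabla v+bv)+(c_1+c_2)\nabla v+dv=-\dive f+g\quad\text{in }B_{2r}.
\]
Applying Proposition~\ref{MaxPrincipleB} to $\pm v$ (which have zero trace) gives
\begin{equation}\label{eq:vbd}
\sup_{B_{2r}}|v|\leq C\|f\|_{L^{n,1}(B_{2r})}+C\|g\|_{L^{\frac{n}{2},1}(B_{2r})},
\end{equation}
with $C=C_{n,q,\lambda,M}$.

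The second step is to transfer the interior bound to $u$. The difference $w:=u-v\in W^{1,2}(B_{2r})$ is a subsolution of the homogeneous equation $-\dive(A\nabla w+bw)+(c_1+c_2)\nabla w+dw\leq 0$, and since $\delta\leq\zeta_{n,q,\lambda,M}$ and $\xi=\xi_{n,\lambda}$, Lemma~\ref{LocalBoundSmallb} is applicable and gives
\[
\sup_{B_{r/4}}w\leq C\left(\fint_{B_{2r}}|w^+|^2\right)^{\frac{1}{2}}
\leq C\left(\fint_{B_{2r}}|u^+|^2\right)^{\frac{1}{2}}+C\sup_{B_{2r}}|v|.
\]
Combining with \eqref{eq:vbd} and covering $B_r$ by finitely many balls of radius $r/4$ (applying the same argument at each center) produces the $p=2$ version of the desired estimate on $B_r$.

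The passage to a general exponent $p>0$ is routine. For $p\geq 2$, H{\"o}lder's inequality reduces the $L^2$ average to the $L^p$ average directly. For $p\in(0,2)$, a standard Moser-type interpolation argument (as in \cite[pp.~80--82]{Giaquinta}), using the just-proved $L^2$ bound on shrinking balls $B_{\rho_k}\subset B_{2r}$ together with Young's inequality to absorb $\sup u$ into the iterated $L^p$ average, completes the proof. The main potential obstacle is bookkeeping the constants so that the smallness thresholds from Proposition~\ref{MaxPrincipleB} and from Lemma~\ref{LocalBoundSmallb} are simultaneously achievable with a single $\delta=\delta_{n,q,\lambda,M}$; this is resolved by simply taking their minimum, since the $\xi$ from Lemma~\ref{LocalBoundSmallb} is universal ($n,\lambda$-dependent only) and Proposition~\ref{MaxPrincipleB} permits $c_1$ of arbitrary $L^{n,q}$-norm $\leq M$ provided $b,d$ are taken small enough.
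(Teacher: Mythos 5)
Your proof is correct and follows precisely the approach the paper intends: it is the two-step argument of Proposition~\ref{MoserB} with Proposition~\ref{MaxPrincipleB} replacing Proposition~\ref{MaxPrincipleC} and Lemma~\ref{LocalBoundSmallb} replacing Lemma~\ref{LocalBoundSmallc}, which is exactly what the paper's one-line proof indicates. You also correctly handle the small asymmetry that Lemma~\ref{LocalBoundSmallb} only yields control of $\sup_{B_{r/4}}$ (rather than $\sup_{B_r}$ as in Lemma~\ref{LocalBoundSmallc}) by inserting a covering argument, and the choice $\delta=\min\{\zeta_{n,q,\lambda,M},\gamma_{n,q,\lambda,M}\}$ correctly reconciles the two smallness thresholds.
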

\begin{proof}
The proof is similar to the proof of Proposition~\ref{MoserB}, using Proposition~\ref{MaxPrincipleB} instead of Proposition~\ref{MaxPrincipleC} and Lemma~\ref{LocalBoundSmallb} instead of Lemma~\ref{LocalBoundSmallc}.
\end{proof}

\begin{remark}\label{noSmallness}
Note that the analogue of Propositions~\ref{MoserB} and \ref{MoserC} will hold under no smallness assumptions for $b,d$ or $c,d$ (when $c\in L^{n,q}$, $q<\infty$), but then the constants depend on $b,d$ or $c,d$ and not just on their norms. This can be achieved considering $r'>0$ small enough, so that the norms of $b,d$ or $c,d$ are small enough in all balls of radius $2r'$ that are subsets of $B_{2r}$, and after covering $B_r$ with balls of radius $r'$.
\end{remark}

\subsection{Estimates on the boundary}

We now turn to local boundedness close to the boundary. We will follow the same process as in the case of local boundedness in the interior.

The following are the analogues of \eqref{eq:Cacciopoli} and \eqref{eq:FirstInLemma} close to the boundary; the proof is similar to the one of Lemma~\ref{Coercivity} (as in \cite[proof of Theorem 8.25]{Gilbarg}) and it is omitted.

\begin{lemma}\label{CoercivityBdry}
Let $\Omega\subseteq\bR^n$ be a domain and $B_{2r}\subseteq\bR^n$ be a ball. Let also $A$ be uniformly elliptic and bounded in $\Omega\cap B_{2r}$, with ellipticity $\lambda$.

There exists $\theta=\theta_{n,\lambda}>0$ such that, if $b\in L^{n,1}(\Omega\cap B_{2r})$, $c\in L^{n,\infty}(\Omega\cap B_{2r})$ and $d\in L^{\frac{n}{2},1}(\Omega\cap B_{2r})$ with $\|b\|_{n,1}\leq\theta$, $\|c\|_{n,\infty}\leq\theta$ and $\|d\|_{\frac{n}{2},1}\leq\theta$, then, if $w\in W^{1,2}(\Omega\cap B_{2r})$ is a subsolution to $-\dive(A\nabla w+bw)+c\nabla w+dw\leq 0$ with $w\leq 0$ on $\partial\Omega\cap B_{2r}$, we have that
\[
\int_{\Omega\cap B_r}|\nabla w|^2\leq\frac{C}{r^2}\int_{\Omega\cap B_{2r}}|w^+|^2,
\]
where $C$ depends on $n,\lambda$ and $\|A\|_{\infty}$.

Moreover, for any subsolution $u\in W^{1,2}(\Omega\cap B_{2r})$ to $-\dive(A\nabla u)+c\nabla u\leq 0$ in $\Omega\cap B_{2r}$ and any $\alpha\in(1,2)$, we have that
\[
\sup_{\Omega\cap B_r}u\leq\frac{C}{(\alpha-1)^{n/2}}\left(\fint_{B_{\alpha r}}v^2\right)^{\frac{1}{2}},
\]
where $v=u^+\chi_{\Omega\cap B_{2r}}$, and $C$ depends on $n,\lambda$ and $\|A\|_{\infty}$.
\end{lemma}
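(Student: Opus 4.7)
The plan is to mirror the two parts of Lemma~\ref{Coercivity}, replacing interior test functions with ones adapted to the truncated domain $\Omega\cap B_{2r}$.

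For the Caccioppoli estimate, let $\phi\in C_c^\infty(B_{2r})$ be a standard cutoff with $\phi\equiv 1$ on $B_r$ and $|\nabla\phi|\leq C/r$. The boundary hypothesis $w\leq 0$ on $\partial\Omega\cap B_{2r}$ (understood in the sense $w^+\in Y_0^{1,2}(\Omega\cap B_{2r})$, as explained after \eqref{eq:maximalInequality}) guarantees that $w^+\phi^2$, extended by zero outside $\Omega$, is an admissible element of $W_0^{1,2}(\Omega\cap B_{2r})$, hence can be inserted in the subsolution inequality. Proceeding as in the derivation of \eqref{eq:A}--\eqref{eq:cBound}, ellipticity of $A$ produces the left-hand side $\lambda\int|\phi\nabla w^+|^2$, while the terms involving $b,c,d$ are controlled using H\"older's inequality \eqref{eq:Holder} in Lorentz spaces together with the Sobolev--Lorentz embedding $\|w^+\phi\|_{L^{2^*,2}}\leq C\|\nabla(w^+\phi)\|_{L^2}$ from \cite[Lemma 2.2]{SakAPDE}. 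Choosing $\theta=\theta_{n,\lambda}$ small enough absorbs the resulting contributions into the left-hand side, yielding the stated Caccioppoli-type bound.

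For the sup-norm estimate, set $v=u^+\chi_{\Omega\cap B_{2r}}$. One runs Moser iteration in the style of the proof of \cite[Theorem 8.25]{Gilbarg}: test with $(u^+)^{\gamma}\phi_m^2$ (again extended by zero) on a decreasing sequence of nested cutoffs $\phi_m$ supported in concentric balls $B_{r_m}\subset B_{2r}$, combine the resulting Caccioppoli-type bound with the Sobolev inequality applied to $v^{\gamma}\phi_m\in W_0^{1,2}(B_{r_m})$ to pass from $L^{2\gamma}$ to $L^{2\gamma\cdot 2^*/2}$, and iterate. The smallness of $\|c\|_{n,\infty}$ is used at every iteration step via Lorentz--H\"older and the sharp Sobolev--Lorentz embedding, so it is genuinely absorbed and not accumulated. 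This yields
\[
\sup_{\Omega\cap B_{\rho}} u\leq C\Bigl(\fint_{B_{2\rho'}}v^2\Bigr)^{1/2}
\]
for $0<\rho<\rho'<r$, with $C=C_{n,\lambda,\|A\|_\infty}$. The $\alpha$-dependent form is then obtained by the same covering trick as in the last step of the proof of \eqref{eq:FirstInLemma}: for $x\in\Omega\cap B_r$ one applies the preceding estimate on $B_{(\alpha-1)r/2}(x)\subset B_{\alpha r}$ and takes the supremum.

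The main technical obstacle is verifying that the Sobolev--Lorentz step can be applied uniformly throughout the Moser iteration: a naive $L^q$-H\"older ($q$ slightly larger than $n$) would produce, at each step, a constant depending on how close $q$ is to $n$, and summing these over the iteration can spoil the absorption. Using $L^{n,\infty}$ of $c$ paired with $L^{2^*,2}$ of $v^{\gamma}\phi_m$ keeps the factor from $\|c\|_{n,\infty}$ linear and small, so smallness of $\theta$ suffices to run the iteration uniformly in $\gamma$. Once this is set up, the argument proceeds verbatim as in the interior case.
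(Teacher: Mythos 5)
Your proof correctly fills in the argument the paper omits: the Caccioppoli bound via the test function $w^+\phi^2$ extended by zero, followed by Moser iteration in the style of GT Theorem~8.25 with the Lorentz--H\"older absorption of the $c$-term, exactly as in the interior Lemma~\ref{Coercivity}. The uniformity concern you raise is the right thing to check, and your resolution is correct: writing $w=(u^+)^\beta$, the ellipticity term carries a factor $\tfrac{2\beta-1}{\beta^2}$ and the drift term a factor $\tfrac{1}{\beta}$, so their ratio is bounded uniformly for $\beta\ge 1$ and the smallness of $\theta$ suffices independently of the iteration index. One point worth making explicit, which both you and the lemma's statement leave implicit: for $v=u^+\chi_{\Omega\cap B_{2r}}$ to lie in $W^{1,2}(B_{2r})$ and for the iterated test functions $(u^+)^\gamma\phi_m^2$ (extended by zero) to lie in $W_0^{1,2}(\Omega\cap B_{2r})$, one needs $u\le 0$ on $\partial\Omega\cap B_{2r}$, i.e.\ $u^+\in Y_0^{1,2}(\Omega\cap B_{2r})$ after cutting off. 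This is the situation in which the lemma is applied (see the opening reduction in the proof of Proposition~\ref{MoserBBoundary}); absent that boundary condition, GT Theorem~8.25 works instead with $\tilde u$ from \eqref{eq:utildeDfn}, which is constant outside $\Omega$ and handles nonzero boundary data.
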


To show local boundedness close to the boundary, we will need the following definition from \cite[Theorem 8.25]{Gilbarg}: if $u$ is a function in $\Omega$ and $\partial\Omega\cap B_{2r}\neq\emptyset$, we define
\begin{equation}\label{eq:utildeDfn}
s_u=\sup_{\partial\Omega\cap B_{2r}}u^+,\qquad\tilde{u}(x)=\left\{\begin{array}{l l}\sup\{u(x),s_u\}, & x\in B_{2r}\cap\Omega \\
s_u, & x\in B_{2r}\setminus\Omega\end{array}\right.
\end{equation}
where the supremum over $\partial\Omega\cap B_{2r}$ is defined as on \cite[page 202]{Gilbarg}.

The following proposition concerns the case of large $b$.

\begin{prop}\label{MoserBBoundary}
Let $\Omega\subseteq\bR^n$ be a domain, and $B_{2r}$ be a ball of radius $2r$. Let also $A$ be uniformly elliptic and bounded in $\Omega\cap B_{2r}$, with ellipticity $\lambda$, $b\in L^{n,1}(\Omega\cap B_{2r})$ with $\|b\|_{n,1}\leq M$, and $p>0$, $f\in L^{n,1}(\Omega\cap B_{2r})$, $g\in L^{\frac{n}{2},1}(\Omega\cap B_{2r})$.
	
There exists $\e=\e_{n,\lambda,M}>0$ such that, if $c\in L^{n,\infty}(\Omega\cap B_{2r})$ and $d\in L^{\frac{n}{2},1}(\Omega\cap B_{2r})$ with $\|c\|_{n,\infty}<\e$ and $\|d\|_{\frac{n}{2},1}<\e$, then for any subsolution $u\in W^{1,2}(\Omega\cap B_{2r})$ to $-\dive(A\nabla u+bu)+c\nabla u+du\leq -\dive f+g$, we have that
\[
\sup_{\Omega\cap B_r}\tilde{u}\leq C\left(\fint_{B_{2r}}|\tilde{u}|^p\right)^{\frac{1}{p}}+C\|f\|_{L^{n,1}(\Omega\cap B_{2r})}+C\|g\|_{L^{\frac{n}{2},1}(\Omega\cap B_{2r})},
\]
where $\tilde{u}$ is defined in \eqref{eq:utildeDfn}, and where $C$ depends on $n,p,\lambda,\|A\|_{\infty}$ and $M$.
\end{prop}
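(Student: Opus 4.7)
The proof will mirror the two-step structure of Proposition~\ref{MoserB}, with each interior ingredient replaced by its boundary analogue. First, I would absorb the source terms $f,g$ using the maximum principle, and then prove an $L^2$ Moser bound for the resulting homogeneous subsolution by the same induction that underlies Lemmas~\ref{localAllSmall} and~\ref{LocalBoundSmallc}, this time leaning on Lemma~\ref{CoercivityBdry} in place of Lemma~\ref{Coercivity}.

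For the first step, I take $\e\leq\beta_{n,\lambda,M}$, the constant from Proposition~\ref{MaxPrincipleC}. Since $\|c\|_{n,\infty},\|d\|_{n/2,1}<\beta_{n,\lambda,M}$, the maximum principle forces uniqueness of the zero solution in $W_0^{1,2}(\Omega\cap B_{2r})$; adding a term $+Lu$ for $L=L_{n,\lambda,M}$ large makes the operator coercive, and Lax--Milgram together with the Fredholm alternative produce a unique $v\in W_0^{1,2}(\Omega\cap B_{2r})$ solving $\mathcal{L}v=-\dive f+g$. Extending $v$ by zero, this extension lies in $Y_0^{1,2}(\Omega)$ and satisfies the same equation in $\Omega$, hence Proposition~\ref{MaxPrincipleC} yields
\[
\sup_{\Omega\cap B_{2r}}|v|\leq C\|f\|_{L^{n,1}(\Omega\cap B_{2r})}+C\|g\|_{L^{\frac{n}{2},1}(\Omega\cap B_{2r})},
\]
with $C=C_{n,\lambda,M}$. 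Setting $w=u-v$, this is a subsolution of the homogeneous inequality. Because $v$ has vanishing trace on $\partial\Omega\cap B_{2r}$, one has $s_w=s_u$, and consequently $|\tilde{u}-\tilde{w}|\leq\sup|v|$ throughout $B_{2r}$. It therefore suffices to establish the homogeneous $L^2$ boundary bound
\[
\sup_{\Omega\cap B_r}\tilde{w}\leq C\Bigl(\fint_{B_{2r}}\tilde{w}^2\Bigr)^{1/2},\qquad C=C_{n,\lambda,\|A\|_\infty,M}.
\]

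For this $L^2$ bound, I would run the same induction on $M=\|b\|_{n,1}$ as in Lemma~\ref{LocalBoundSmallc}. The base case is a boundary version of Lemma~\ref{localAllSmall}: one splits $w=v_0+w_0$, where $v_0\in W_0^{1,2}(\Omega\cap B_{r/2})$ solves an auxiliary Dirichlet problem encoding the lower-order terms and is controlled in $L^\infty$ by Proposition~\ref{MaxPrincipleC}, while $w_0$ is a subsolution of $-\dive(A\nabla\cdot)+c\nabla\cdot\leq 0$ in $\Omega\cap B_{r/2}$ with $w_0\leq s_w$ on $\partial\Omega\cap B_{r/2}$; applying the homogeneous boundary Moser bound in Lemma~\ref{CoercivityBdry} to $w_0-s_w$ and iterating on radii as in Lemma~\ref{localAllSmall} yields the ``all small'' estimate for $\tilde{w}$. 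The induction step uses the dichotomy of Lemma~\ref{LocalBoundSmallc}: either $\|b\|_{L^{n,1}(\Omega\cap B_{3r/2})}\leq 2^{N/n}\theta'_{n,\lambda}$, in which case one covers $\Omega\cap B_r$ by balls $B_{r/2}(x)$ centred in $\Omega\cap B_r$ and applies the hypothesis at level $N$ in each; or else Lemma~\ref{NormDisjoint} provides that the annular norm of $b$ on $B_{2r}\setminus B_{3r/2}$ is small, so the hypothesis applies in balls $B_{r/4}(y)$ for $y\in\Omega\cap\partial B_{7r/4}$, and the resulting pointwise bound for $\tilde{w}$ on $\Omega\cap\partial B_{7r/4}$ is propagated to $\Omega\cap B_r$ by Proposition~\ref{MaxPrincipleC} in the domain $\Omega\cap B_{7r/4}$.

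The main obstacle is precisely this large-$b$ induction step: the subsolution to which Proposition~\ref{MaxPrincipleC} is applied on $\Omega\cap B_{7r/4}$ must have controlled boundary values on \emph{both} $\Omega\cap\partial B_{7r/4}$ (from the previous induction layer) and $\partial\Omega\cap B_{7r/4}$ (from the original boundary datum $s_w=s_u$). The fix is to subtract the larger of these two boundary suprema from $w$ and apply the maximum principle to the resulting $Y_0^{1,2}$ function; since Proposition~\ref{MaxPrincipleC} requires no regularity of the boundary, the scale-invariant constant depending only on $n,\lambda,M$ is preserved. Once the $L^2$ bound for $\tilde{w}$ is obtained, adding back $\sup|v|$ gives the statement for $p=2$, and the extension to general $p>0$ is verbatim the argument at the end of Proposition~\ref{MoserB}: H\"older for $p\geq 2$ and the interpolation-absorption trick of~\cite{Giaquinta} for $0<p<2$.
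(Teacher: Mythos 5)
Your proposal follows the same two-step structure as the paper (source-absorption via the boundary maximum principle, then the boundary analogues of Lemma~\ref{localAllSmall} and Lemma~\ref{LocalBoundSmallc} by induction on $\|b\|_{n,1}$), but you run the steps in the opposite order from the paper. The paper first normalizes $s_u=0$ by subtracting $s_u$ (observing $\tilde{u}\geq s_u$, so both $\sup\tilde{u}$ and $\fint|\tilde{u}|^p$ control the shift and absorb the resulting $s_ub,\,s_ud$ source terms), and only then runs the induction, during which only the single boundary piece $\Omega\cap\partial B_{7r/4}$ has nontrivial values. You instead absorb $f,g$ first and then carry the nonzero trace $s_w=s_u$ through the induction, which is why you encounter the ``two boundary pieces'' obstacle and need your subtraction fix. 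Both orderings work; the paper's is cleaner precisely because it eliminates the obstacle at the outset, and it also keeps the induction quantity a single function $u^+\chi_\Omega$ rather than the shifted $\tilde{w}$ whose implicit constant $s_w$ changes as one passes to subballs.

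One genuine error: the claim that extending $v\in W_0^{1,2}(\Omega\cap B_{2r})$ by zero yields a function in $Y_0^{1,2}(\Omega)$ that ``satisfies the same equation in $\Omega$'' is false. The zero-extension does lie in $Y_0^{1,2}(\Omega)$, but it does not solve the equation across $\Omega\cap\partial B_{2r}$; the normal derivative of $v$ need not vanish there, so a distributional contribution on $\Omega\cap\partial B_{2r}$ appears. This is, however, harmless to your argument, because the extension is unnecessary: Proposition~\ref{MaxPrincipleC} applies directly with the domain $\Omega\cap B_{2r}$ (it places no regularity assumption on the boundary), giving $\sup_{\Omega\cap B_{2r}}|v|\leq C\|f\|_{n,1}+C\|g\|_{n/2,1}$ at once. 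Replacing that one sentence by a direct invocation of the maximum principle on $\Omega\cap B_{2r}$, your proof sketch is correct.
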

\begin{proof}
Subtracting a constant from $u$, and since $\tilde{u}\geq s_u$ in $B_{2r}$, we can reduce to the case when $u\leq 0$ on $\partial\Omega\cap B_{2r}$ (that is, $s_u=0$). Then, based on Lemma~\ref{CoercivityBdry} and \cite[Theorem 8.25]{Gilbarg} instead of Lemma~\ref{Coercivity} and \cite[Theorem 8.17]{Gilbarg}, respectively, we can show the analogue of Lemma~\ref{localAllSmall}, replacing all the balls by their intersections with $\Omega$, for subsolutions $u\in W^{1,2}(\Omega\cap B_{2r})$ with $u\leq 0$ on $\partial\Omega\cap B_{2r}$. We then continue with a similar argument as in the proofs of Lemma~\ref{LocalBoundSmallc} and Proposition~\ref{MoserB}, replacing all the balls by their intersections with $\Omega$.
\end{proof}

Finally, using a similar argument to the above, and going through the arguments of the proofs of Lemma~\ref{LocalBoundSmallb} and Proposition~\ref{MoserC}, we obtain the following estimate close to the boundary, in the case that $c$ is large.

\begin{prop}\label{MoserCBoundary}
Let $\Omega\subseteq\bR^n$ be a domain, and $B_{2r}$ be a ball of radius $2r$. Let also $A$ be uniformly elliptic and bounded in $\Omega\cap B_{2r}$, with ellipticity $\lambda$, and consider $q<\infty$ and $c_1\in L^{n,q}(\Omega\cap B_{2r})$ with $\|c_1\|_{n,q}\leq M$. Let also $p>0$, $f\in L^{n,1}(\Omega\cap B_{2r})$, and $g\in L^{\frac{n}{2},1}(\Omega\cap B_{2r})$.
	
There exist $\xi=\xi_{n,\lambda}>0$ and $\delta=\delta_{n,q,\lambda,M}>0$ such that, if $b\in L^{n,1}(\Omega\cap B_{2r})$, $c_2\in L^{n,\infty}(\Omega\cap B_{2r})$ and $d\in L^{\frac{n}{2},1}(\Omega\cap B_{2r})$ with $\|b\|_{n,1}<\delta$, $\|c_2\|_{n,\infty}<\xi$ and $\|d\|_{\frac{n}{2},1}<\delta$, then for any subsolution $u\in W^{1,2}(\Omega\cap B_{2r})$ to $-\dive(A\nabla u+bu)+(c_1+c_2)\nabla u+du\leq -\dive f+g$, we have that
\[
\sup_{\Omega\cap B_r}\tilde{u}\leq C\left(\fint_{B_{2r}}|\tilde{u}|^p\right)^{\frac{1}{p}}+C\|f\|_{L^{n,1}(\Omega\cap B_{2r})}+C\|g\|_{L^{\frac{n}{2},1}(\Omega\cap B_{2r})},
\]
where $\tilde{u}$ is defined in \eqref{eq:utildeDfn}, and where $C$ depends on $n,p,q,\lambda,\|A\|_{\infty}$ and $M$.
\end{prop}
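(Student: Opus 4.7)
The plan is to follow the scheme already executed in the interior case (Lemma~\ref{LocalBoundSmallb} and Proposition~\ref{MoserC}), systematically replacing every ball $B_s$ by $\Omega\cap B_s$ and invoking the boundary versions of the auxiliary tools. First I would subtract the constant $s_u=\sup_{\partial\Omega\cap B_{2r}}u^+$ to reduce to the situation $u\leq 0$ on $\partial\Omega\cap B_{2r}$, absorbing the difference into the right-hand side; as in Proposition~\ref{MoserBBoundary}, the bound on $\tilde u$ is equivalent to the bound on $u^+$ plus $s_u$, and $s_u$ is controlled by the eventual $L^p$-average of $\tilde u$ on $B_{2r}$.

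Next, I would establish the boundary analogue of Lemma~\ref{localAllSmall}: if $\|b\|_{n,1}$, $\|c\|_{n,\infty}$ and $\|d\|_{\frac{n}{2},1}$ on $\Omega\cap B_{2r}$ are all smaller than some $\theta'_{n,\lambda}$, and $u$ is a subsolution with $u\leq 0$ on $\partial\Omega\cap B_{2r}$, then $\sup_{\Omega\cap B_r}u\leq C(\fint_{B_{2r}}|v|^2)^{1/2}$, where $v=u^+\chi_{\Omega\cap B_{2r}}$. The argument goes exactly as before, using Lemma~\ref{CoercivityBdry} in place of Lemma~\ref{Coercivity} and extending $u$ by $0$ outside $\Omega$ when solving the auxiliary equation $-\dive(A\nabla v)+c\nabla v=\dive(bu)-du$ on balls (via Lax--Milgram on $W_0^{1,2}(\Omega\cap B_{\eta' r})$). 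The global maximum principle (Proposition~\ref{MaxPrincipleC}) applies as stated because we work with $v\in Y_0^{1,2}$ of the underlying domain. Approximation by bounded coefficients proceeds exactly as in Lemma~\ref{localAllSmall}.

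Then I would run the induction of Lemma~\ref{LocalBoundSmallb} on $N$ such that $\|c_1\|_{n,q}\leq 2^{N/q}C_{n,q}^{-1}\xi_{n,\lambda}$. The induction step splits again into two cases according to whether $\|c_1\|_{L^{n,q}(\Omega\cap B_{3r/2})}$ is $\leq$ or $>$ $2^{N/q}C_{n,q}^{-1}\xi_{n,\lambda}$: in the first case one iterates the inductive hypothesis on small balls $B_{r/2}(x)\cap\Omega$ for $x\in\Omega\cap B_r$; in the second, Lemma~\ref{NormDisjoint} applied to $\Omega\cap B_{r/4}(y)\subseteq (\Omega\cap B_{2r})\setminus(\Omega\cap B_{3r/2})$ for $y\in\partial B_{7r/4}$ gives the norm reduction needed, and one caps the estimate using Proposition~\ref{MaxPrincipleB} applied to $u-s$ on the domain $\Omega\cap B_{7r/4}$ (which has $\tilde u\leq\sup_{\partial B_{7r/4}\cap\Omega}\tilde u$ on its boundary, and on $\partial\Omega\cap B_{7r/4}$ the hypothesis $u\leq 0$ is inherited). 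This yields the $L^2$-boundary version of \eqref{eq:forInduction2}.

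Finally, to incorporate the right-hand side, I would solve $-\dive(A\nabla w+bw)+(c_1+c_2)\nabla w+dw=-\dive f+g$ with $w\in W_0^{1,2}(\Omega\cap B_{2r})$ via Lax--Milgram/Fredholm (adding $+Lw$ to make the form coercive, then using Proposition~\ref{MaxPrincipleB} to ensure uniqueness hence invertibility), and use Proposition~\ref{MaxPrincipleB} to bound $\sup|w|$ by $C(\|f\|_{n,1}+\|g\|_{\frac{n}{2},1})$; then $u-w$ is a homogeneous subsolution with $(u-w)\leq s_u$ on $\partial\Omega\cap B_{2r}$, and the $L^2$-boundary estimate already proven applies. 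Passage from $p=2$ to general $p>0$ is standard: $p\geq 2$ by H\"older, and $p\in(0,2)$ by the interpolation trick in \cite[pp.~80--82]{Giaquinta}, which only uses the $L^2$-estimate on a family of nested balls together with Young's inequality, and transfers verbatim to $\tilde u$ since $\tilde u$ is defined on all of $B_{2r}$. The main obstacle, as in the interior case, is the bookkeeping in the induction step: one must verify that the boundary version of the maximum principle applies to the intermediate domain $\Omega\cap B_{7r/4}$ with the correct boundary data on both $\partial\Omega\cap B_{7r/4}$ and $\Omega\cap\partial B_{7r/4}$, but the reduction $u\leq 0$ on $\partial\Omega\cap B_{2r}$ and the fact that $u\leq \sup_{\partial B_{7r/4}}u$ on the interior part of $\partial(\Omega\cap B_{7r/4})$ makes this routine.
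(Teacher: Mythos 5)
Your proposal is correct and follows essentially the same route as the paper, which (tersely) says to repeat the argument of Proposition~\ref{MoserBBoundary} while replacing Lemma~\ref{LocalBoundSmallc} and Proposition~\ref{MoserB} by Lemma~\ref{LocalBoundSmallb} and Proposition~\ref{MoserC}: reduce to $s_u=0$, prove a boundary version of Lemma~\ref{localAllSmall} via Lemma~\ref{CoercivityBdry} and \cite[Theorem 8.25]{Gilbarg}, run the two-case induction with Lemma~\ref{NormDisjoint} and Proposition~\ref{MaxPrincipleB}, then reinstate $f,g$ by solving the auxiliary Dirichlet problem and pass from $p=2$ to general $p>0$. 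Your expansion of the paper's one-line justification is faithful and fills in the right details.
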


\begin{remark}\label{noSmallness2}
As in Remark~\ref{noSmallness}, the analogues of Propositions~\ref{MoserBBoundary} and \ref{MoserCBoundary} will hold under no smallness assumptions for $b,d$ or $c,d$ (when $c\in L^{n,q}$, $q<\infty$), with constants depending on $b,d$ or $c,d$ and not just on their norms.
\end{remark}

\section{The reverse Moser estimate and the Harnack inequality}\label{secHarnack}

\subsection{The lower bound}

In order to deduce the Harnack inequality, we will consider negative powers of positive supersolutions to transform them to subsolutions of suitable operators, where the coefficients $b,d$ will be small. This is the context of the following lemma.

\begin{lemma}\label{Pass}
Let $\Omega\subseteq\bR^n$ be a domain, $b,c,f\in L^{n,\infty}(\Omega)$ and $d,g\in L^{\frac{n}{2},\infty}(\Omega)$. Let also $u\in W^{1,2}(\Omega)$ be a supersolution to $-\dive(A\nabla u+bu)+c\nabla u+du\geq -\dive f+g$ with $\inf_{\Omega}u>0$, and consider the function $v=u+\|f\|_{L^{n,1}(\Omega)}+\|g\|_{L^{\frac{n}{2},1}(\Omega)}$. Then, for any $k<0$, $v^k$ is a $W^{1,2}(\Omega)$ subsolution to
\begin{equation}\label{eq:subsol}
-\dive\left(A\nabla(v^k)+\frac{k(bu-f)}{v}v^k\right)+\left(\frac{(k-1)(bu-f)}{v}+c\right)\nabla(v^k)+\frac{k(du-g)}{v}v^k\leq 0.
\end{equation}
\end{lemma}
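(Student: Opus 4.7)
The plan is to test the supersolution inequality satisfied by $u$ against the function $\psi = v^{k-1}\phi$, where $\phi\in C_c^\infty(\Omega)$ is nonnegative, and then match the resulting expression term by term with the subsolution inequality one wants for $v^k$.

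First observe that $\nabla v = \nabla u$ since $v$ differs from $u$ by a constant, and that $v$ is bounded below by a positive number by hypothesis. Hence $v^{k-1}$ and $v^k$ are bounded on $\Omega$, while $\nabla(v^k) = k v^{k-1}\nabla v$ lies in $L^2$ on $\operatorname{supp}\phi$. In particular $\psi = v^{k-1}\phi$ is nonnegative, compactly supported in $\Omega$, and belongs to $W^{1,2}_0(\Omega)$, which by a routine approximation makes it admissible as a test function in the supersolution analogue of \eqref{eq:subsolDfn} applied to $u$.

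Using $\nabla\psi = (k-1)v^{k-2}\phi\,\nabla v + v^{k-1}\nabla\phi$, together with $\nabla u=\nabla v$, the supersolution inequality for $u$ becomes
\begin{align*}
&(k-1)\int v^{k-2}\phi\,A\nabla v\cdot\nabla v + \int v^{k-1}A\nabla v\cdot\nabla\phi \\
&\qquad + (k-1)\int v^{k-2}(bu-f)\cdot\nabla v\,\phi + \int v^{k-1}(bu-f)\cdot\nabla\phi \\
&\qquad + \int v^{k-1}\phi\,(c\nabla v + du - g) \;\geq\; 0.
\end{align*}
On the other hand, testing the desired inequality \eqref{eq:subsol} for $v^k$ against $\phi$, substituting $\nabla(v^k) = kv^{k-1}\nabla v$, and dividing by the negative number $k$ (so that the inequality flips to $\geq 0$), one obtains exactly the same expression \emph{minus} the single term $(k-1)\int v^{k-2}\phi\,A\nabla v\cdot\nabla v$.

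The key and only observation is that this extra term is nonpositive: $k<0$ forces $k-1<0$, while $\phi\geq 0$, $v>0$, and the ellipticity of $A$ give $\int v^{k-2}\phi\,A\nabla v\cdot\nabla v\geq 0$. Hence the supersolution inequality for $u$ is strictly stronger than the reduced inequality one needs for $v^k$, and the latter follows at once. There is no real obstacle here: the argument amounts to a careful chain-rule bookkeeping for the nonlinearity $v\mapsto v^k$, and the sign of $k-1$ is precisely what makes the quadratic form $A\nabla v\cdot\nabla v$ contribute in the favorable direction.
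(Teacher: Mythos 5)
Your proof is correct and uses essentially the same idea as the paper: the chain rule for $v\mapsto v^k$ produces the extra quadratic term $(k-1)v^{k-2}\phi\,A\nabla v\cdot\nabla v$, and the sign $k-1<0$ together with ellipticity makes it favorable. The paper carries out this computation distributionally (formally expanding $-\dive(A\nabla(v^k))$ and invoking the supersolution property pointwise), while you render the same computation rigorous by explicitly testing the weak supersolution inequality against $\psi=v^{k-1}\phi$; the content is identical.
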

\begin{proof}
We compute
\[
-\dive(A\nabla(v^k))=-\dive(A\nabla v\cdot kv^{k-1})=-k\dive(A\nabla v)v^{k-1}-k(k-1)A\nabla v\nabla v\cdot v^{k-2}.
\]
From ellipticity of $A$ we have that $A\nabla v\nabla v\geq 0$. Since also $k<0$, the last identity shows that $-\dive(A\nabla(v^k))\leq -\dive(A\nabla u)\cdot kv^{k-1}$. Since $k<0$, $v^{k-1}>0$ and $u$ is a supersolution, we have
\[
-\dive(A\nabla(v^k))\leq (\dive(bu)-c\nabla u-du-\dive f+g)kv^{k-1},
\]
and the proof is complete after a straightforward computation.
\end{proof}

The next lemma bridges the gap between $L^p$ averages for positive and negative $p$.

\begin{lemma}\label{JohnNirenberg}
Let $A$ be uniformly elliptic and bounded in $B_{2r}$, with ellipticity $\lambda$, and $b,c\in L^{n,\infty}(B_{2r})$, $d\in L^{\frac{n}{2},\infty}(B_{2r})$. Let also $u\in W^{1,2}(B_{2r})$ be a supersolution to $-\dive(A\nabla u+bu)+c\nabla u+du\geq 0$ in $B_{2r}$, with $\inf_{B_{2r}}u>0$. Then there exists a constant $a=a_n$ such that
\[
\fint_{B_r}u^a\fint_{B_r}u^{-a}\leq C,
\]
where $C$ depends on $n,\lambda,\|A\|_{\infty}$, $\|b\|_{n,\infty}$, $\|c\|_{n,\infty}$ and $\|d\|_{\frac{n}{2},\infty}$.
\end{lemma}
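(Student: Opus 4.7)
\emph{Plan.} The strategy is the classical Moser argument, adapted to the Lorentz setting: establish a scale-invariant bound on $\int\eta^2|\nabla\log u|^2$ from the supersolution inequality, deduce that $\log u$ lies in $\mathrm{BMO}(B_r)$ with a norm controlled by the structural constants, and then apply the John--Nirenberg inequality to bound the product of the two averages for a suitably small exponent $a$. Since $\inf_{B_{2r}}u>0$, the function $\log u$ belongs to $W^{1,2}(B_{2r})$, so all computations below are legitimate.

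\textbf{Logarithmic Caccioppoli.} For a nonnegative Lipschitz cutoff $\eta$ compactly supported in $B_{2r}$, I would test the supersolution inequality with $\phi=\eta^2/u$. Using $\nabla\phi = 2\eta u^{-1}\nabla\eta - \eta^2 u^{-2}\nabla u$ and rearranging, the $A$-principal part yields $\lambda\int\eta^2|\nabla\log u|^2$ on the left, while on the right one gets
\[
2\int\eta\,A\nabla\log u\cdot\nabla\eta + 2\int\eta\,b\cdot\nabla\eta + \int\eta^2(c-b)\cdot\nabla\log u + \int d\,\eta^2.
\]
The first and third terms are controlled by Young's inequality, absorbing a fraction of $\int\eta^2|\nabla\log u|^2$ back into the left-hand side. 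The remaining $b$-, $c$-, and $d$-contributions are handled using H\"older's inequality in Lorentz spaces \eqref{eq:Holder} combined with the Sobolev--Lorentz embedding $\|\eta\|_{L^{2^*,2}}\le C\|\nabla\eta\|_{L^2}$ (from \cite[Lemma 2.2]{SakAPDE}), which in turn gives the key estimates
\[
\|\eta^2\|_{L^{n/(n-2),1}(B_{2r})} \le C\|\nabla\eta\|_{L^2(B_{2r})}^2,\qquad \|b\eta\|_{L^2(B_{2r})} \le C\|b\|_{n,\infty}\|\nabla\eta\|_{L^2(B_{2r})},
\]
and the analogous bound for $c$. The net result is the scale-invariant estimate
\[
\int_{B_{2r}}\eta^2|\nabla\log u|^2 \le C\int_{B_{2r}}|\nabla\eta|^2,
\]
with $C$ depending only on $n,\lambda,\|A\|_\infty,\|b\|_{n,\infty},\|c\|_{n,\infty},\|d\|_{n/2,\infty}$.

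\textbf{BMO bound and John--Nirenberg.} For any ball $B_\rho(x)\subseteq B_r$ the inclusion $B_{2\rho}(x)\subseteq B_{2r}$ holds automatically, so applying the Caccioppoli bound to a standard cutoff with $\eta\equiv 1$ on $B_\rho(x)$ and $|\nabla\eta|\le C/\rho$ yields $\int_{B_\rho(x)}|\nabla\log u|^2 \le C\rho^{n-2}$. The Poincar\'e inequality then gives
\[
\fint_{B_\rho(x)}|\log u-(\log u)_{B_\rho(x)}|^2 \le C
\]
uniformly in $B_\rho(x)\subseteq B_r$, so $\log u\in\mathrm{BMO}(B_r)$ with norm $K$ bounded in terms of the prescribed constants. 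Applying the classical John--Nirenberg inequality to $w=\log u-(\log u)_{B_r}$, one chooses $a>0$ small enough that $aK$ is strictly below the universal John--Nirenberg threshold, so that $\fint_{B_r}e^{\pm aw}\le C$; multiplying and using that the two constant shifts cancel yields $\fint_{B_r}u^a\fint_{B_r}u^{-a}\le C^2$.

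\textbf{Main obstacle.} The delicate step is the logarithmic Caccioppoli, since the lower-order terms involve only weak-$L^n$ and weak-$L^{n/2}$ norms and in particular the $d$-contribution $\int d\eta^2$ cannot appeal to any Kato-type smallness on small balls. The pairing $\|\eta^2\|_{L^{n/(n-2),1}} \le C\|\nabla\eta\|_{L^2}^2$ delivered by the Sobolev--Lorentz embedding is precisely what saves the argument, allowing the $d$-term to be absorbed into $C\int|\nabla\eta|^2$ without any smallness hypothesis. A minor technical point in the final step is that the admissible range of $a$ in the John--Nirenberg inequality depends on $K$, but since $K$ is itself controlled by the constants permitted in the statement, this dependence is absorbed into the final $C$ and does not affect the conclusion.
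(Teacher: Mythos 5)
Your proof is correct and follows essentially the same route as the paper's: test with $\eta^2/u$, derive a logarithmic Caccioppoli estimate in which the lower-order terms are absorbed via Lorentz-H\"older and the Sobolev--Lorentz embedding (the paper writes this for a concrete cutoff and obtains the equivalent bound $Cs^{n-2}$, rather than the cleaner $C\int|\nabla\eta|^2$), then pass through Poincar\'e and John--Nirenberg as in Moser's original argument. The only difference is cosmetic packaging of the scale-invariant Caccioppoli bound; the substance and all the key inequalities used are the same.
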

\begin{proof}
We use the test function from \cite[page 586]{MoserHarnack} (see also \cite[page 195]{Gilbarg}): let $B_{2s}$ be a ball of radius $2s$, contained in $B_{2r}$. If $\phi\geq 0$ be a smooth cutoff supported in $B_{2s}$, with $\phi\equiv 1$ in $B_s$ and $|\nabla\phi|\leq\frac{C}{s}$, then the function $\phi^2u^{-1}$ is nonnegative and belongs to $W_0^{1,2}(B_{2s})$. Hence, using it as a test function, we obtain that
\[
\int_{B_{2s}}\left(A\nabla u\frac{2\phi\nabla\phi}{u}-A\nabla u\frac{\phi^2\nabla u}{u^2}+b\frac{2\phi\nabla\phi}{u}u-b\frac{\phi^2\nabla u}{u^2}u+c\nabla u\frac{\phi^2}{u}+du\frac{\phi^2}{u}\right)\geq 0,
\]
hence
\[
\int_{B_{2s}}A\nabla u\frac{\nabla u}{u^2}\phi^2\leq\int_{B_{2r}}\left(A\nabla u\frac{2\phi\nabla\phi}{u}+2b\nabla\phi\cdot\phi-b\frac{\nabla u}{u}\phi^2+c\nabla u\frac{\phi^2}{u}+d\phi^2\right).
\]
Using ellipticity of $A$, the Cauchy-Schwartz inequality, and Cauchy's inequality with $\e$, we obtain
\begin{align*}
\int_{B_{2s}}\frac{|\nabla u|^2}{u^2}\phi^2&\leq C\int_{B_{2s}}\left(|\nabla\phi|^2+|b\nabla\phi|\phi+(|b|^2+|c|^2+|d|)\phi^2\right)\\
&\leq Cs^{n-2}+Cs^{-1}\|b\|_{n,\infty}\|1\|_{L^{\frac{n}{n-1},1}(B_{2s})}+C\left\||b|^2+|c|^2+|d|\right\|_{\frac{n}{2},\infty}\|1\|_{L^{\frac{n}{n-2},1}(B_{2s})}\\
&\leq Cs^{n-2},
\end{align*}
where $C$ depends on $n,\lambda,\|A\|_{\infty}$, $\|b\|_{n,\infty}$, $\|c\|_{n,\infty}$ and $\|d\|_{\frac{n}{2},\infty}$, and where we used \eqref{eq:Holder} for the second estimate. The proof is complete using the Poincar{\'e} inequality and the John-Nirenberg inequality, as on \cite[page 586]{MoserHarnack}.
\end{proof}

The next bound is a reverse Moser estimate for supersolutions. Surprisingly, if we assume that the coefficient $c$ belongs to $L^{n,q}$ for some $q<\infty$, then we obtain a scale invariant estimate with ``good" constants under no smallness assumption on the coefficients. As mentioned before, for the Moser estimate in Propositions~\ref{MoserB} and \ref{MoserC}, such a bound cannot hold with ``good" constants under these assumptions.

\begin{prop}\label{lowerBound}
Let $A$ be uniformly elliptic and bounded in $B_{2r}$, with ellipticity $\lambda$. Let also $b,f\in L^{n,1}(B_{2r})$, $c_1\in L^{n,q}(B_{2r})$ for some $q<\infty$, and $d,g\in L^{\frac{n}{2},1}(B_{2r})$, with $\|b\|_{n,1}\leq M_b$, $\|c_1\|_{n,q}\leq M_c$ and $\|d\|_{\frac{n}{2},1}\leq M_d$.

There exist $a=a_n>0$ and $\xi=\xi_{n,\lambda}>0$ such that, if $c_2\in L^{n,\infty}(B_{2r})$ with $\|c_2\|_{n,\infty}<\xi$, then for any nonnegative supersolution $u\in W^{1,2}(B_{2r})$ to $-\dive(A\nabla u+bu)+(c_1+c_2)\nabla u+du\geq -\dive f+g$, we have that
\[
\left(\fint_{B_r}u^a\right)^{\frac{1}{a}}\leq C\inf_{B_{r/2}}u+C\|f\|_{L^{n,1}(\Omega\cap B_{2r})}+C\|g\|_{L^{\frac{n}{2},1}(\Omega\cap B_{2r})},
\]
where $C$ depends on $n,q,\lambda,\|A\|_{\infty}$, $M_b,M_c$ and $M_d$.
\end{prop}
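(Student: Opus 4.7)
The plan is to exponentiate the nonnegative supersolution by a small negative power to convert it into a subsolution (Lemma~\ref{Pass}), apply the local boundedness estimate Proposition~\ref{MoserC} to that subsolution, and then bridge from the resulting negative-exponent average to a positive-exponent average via the John--Nirenberg type bound of Lemma~\ref{JohnNirenberg}. The decisive feature allowing ``good'' constants with no smallness assumption on $b$ or $d$ is that, after exponentiating $v = u + K$ by $k$ with $|k|$ small, the new coefficients $\tilde b, \tilde d$ of the transformed subsolution pick up an explicit factor of $|k|$ and hence become arbitrarily small, while the new drift $\tilde c$ can be split into a part in $L^{n,q}$ of controlled (but not small) norm and the original $c_2$.

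By replacing $u$ with $u + \e$ and letting $\e \to 0$ at the end (this replaces $f, g$ by $f + b\e, g + d\e$, whose relevant norms converge back), we may assume $\inf_{B_{2r}} u > 0$. Set $K = \|f\|_{L^{n,1}(B_{2r})} + \|g\|_{L^{\frac{n}{2},1}(B_{2r})}$ and $v = u + K$, so $v \geq K > 0$. For any $k \in (-1,0)$, Lemma~\ref{Pass} produces a subsolution $v^k \in W^{1,2}(B_{2r})$ of the homogeneous equation with coefficients
\[
\tilde b = \frac{k(bu-f)}{v}, \qquad \tilde c = \frac{(k-1)(bu-f)}{v} + c_1 + c_2, \qquad \tilde d = \frac{k(du-g)}{v}.
\]
Using $|u/v| \leq 1$ pointwise, $|f|/v \leq |f|/K$ (whose $L^{n,1}$ norm is at most $1$), and the embedding \eqref{eq:LorentzNormsRelations},
\[
\|\tilde b\|_{n,1} \leq |k|(M_b + 1), \quad \|\tilde d\|_{\frac{n}{2},1} \leq |k|(M_d + 1), \quad \|\tilde c_1\|_{n,q} \leq M_c + 2 C_{n,q}(M_b + 1) =: M',
\]
where $\tilde c_1 = (k-1)(bu-f)/v + c_1$ and we used $|k-1| < 2$. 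Crucially, $M'$ does not depend on $k$.

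Fix $\xi = \xi_{n,\lambda}$ and $\delta = \delta_{n,q,\lambda,M'}$ from Proposition~\ref{MoserC}; by hypothesis $\|c_2\|_{n,\infty} < \xi$. Choose $|k|$ small enough that $|k|(M_b+1), |k|(M_d+1) < \delta$, and apply Proposition~\ref{MoserC} to $v^k$ on $B_r$ (playing the role of ``$B_{2r}$'' there) with exponent $p = a_n/|k|$, where $a_n$ is the exponent of Lemma~\ref{JohnNirenberg}. Since the equation for $v^k$ is homogeneous,
\[
\sup_{B_{r/2}} v^k \leq C \left(\fint_{B_r} v^{kp}\right)^{1/p}.
\]
Because $k<0$, $\sup_{B_{r/2}} v^k = (\inf_{B_{r/2}} v)^k$, and raising to the $1/k$ power (reversing the inequality, with $kp = -a_n$) yields
\[
\left(\fint_{B_r} v^{-a_n}\right)^{-1/a_n} \leq C' \inf_{B_{r/2}} v.
\]

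To pass to the positive exponent I apply Lemma~\ref{JohnNirenberg} to $v$ after absorbing the right-hand side of its equation into modified lower-order coefficients: since $v \geq K$, setting $b' = b - (f+bK)/v$ and $d' = d - (g+dK)/v$ makes $v$ a supersolution of the homogeneous equation $-\dive(A\nabla v + b'v) + (c_1+c_2)\nabla v + d'v \geq 0$, with $\|b'\|_{n,1}, \|d'\|_{\frac{n}{2},1}$ (hence also $\|b'\|_{n,\infty}, \|d'\|_{\frac{n}{2},\infty}$) controlled in terms of $M_b, M_d$. Lemma~\ref{JohnNirenberg} then gives $\left(\fint_{B_r} v^{a_n}\right)^{1/a_n} \leq C_3 \left(\fint_{B_r} v^{-a_n}\right)^{-1/a_n}$; combining with the previous display, using $u \leq v$ and $\inf v = \inf u + K$, and finally sending $\e \to 0$, produces the desired estimate with $a = a_n$. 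The principal difficulty is the simultaneous bookkeeping of the three transformed coefficients: we must first fix the threshold $\delta$ via the ``large'' part $\tilde c_1$ whose norm is bounded \emph{uniformly in $k$}, and only then choose $|k|$ small to force $\tilde b, \tilde d$ into the smallness regime of Proposition~\ref{MoserC}. The restriction $k \in (-1,0)$ (keeping $|k-1| \leq 2$) is exactly what makes this decoupling possible.
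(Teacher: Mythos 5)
Your proposal is correct and follows essentially the same strategy as the paper: exponentiate $v=u+K$ by a small negative power $k$ via Lemma~\ref{Pass}, observe that $\tilde b,\tilde d$ gain a factor $|k|$ while the $L^{n,q}$ part of $\tilde c$ stays bounded uniformly in $k$, fix $\delta$ from Proposition~\ref{MoserC} using that uniform bound, then shrink $|k|$ into the smallness regime, and finally bridge negative to positive exponents with Lemma~\ref{JohnNirenberg}. The only cosmetic difference is that you apply Proposition~\ref{MoserC} with $p=a_n/|k|$ to land directly on the $v^{-a_n}$ average, whereas the paper uses $p=1$ followed by a H\"older step requiring $k\in(-a,0)$; the two routes are equivalent, and your constants (modulo the missing $C_{n,q}$ seminorm factors in the norm bounds for $\tilde b,\tilde c_1,\tilde d$, which do not affect the logic) are controlled in the same way.
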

\begin{proof}
Adding a constant $\delta>0$ to $u$, we may assume that $\inf_{B_{2r}}u>0$; the general case will follow by letting $\delta\to 0$. Set $v=u+\|f\|_{L^{n,1}(B_{2r})}+\|g\|_{L^{\frac{n}{2},1}(B_{2r})}$, then $v$ is a supersolution to
\[
-\dive\left(A\nabla v+\frac{bu-f}{v}v\right)+c\nabla v+\frac{du-g}{v}v\geq 0,
\]
with
\[
\left\|\frac{bu-f}{v}\right\|_{n,1}\leq C_n\|b\|_{n,1}+C_n,\qquad \left\|\frac{du-g}{v}\right\|_{\frac{n}{2},1}\leq C_n\|d\|_{\frac{n}{2},1}+C_n.
\]
Then, since $\inf_{B_{2r}}v>0$, Lemma~\ref{JohnNirenberg} implies that there exists $a=a_n$ such that
\begin{equation}\label{eq:alpha}
\fint_{B_r}v^a\fint_{B_r}v^{-a}\leq C,
\end{equation}
where $C$ depends on $n,q,\lambda,\|A\|_{\infty}$, $M_b,M_c$ and $M_d$.

For $k\in(-1,0)$ to be chosen later, $v^k$ is a $W^{1,2}(B_{2r})$ subsolution to \eqref{eq:subsol} for $c=c_1+c_2$, and
\[
\left\|\frac{(k-1)(bu-f)}{v}+c_1\right\|_{n,q}\leq C_{n,q}(1-k)\left\|\frac{bu}{v}\right\|_{n,q}+C_{n,q}(1-k)\left\|\frac{f}{v}\right\|_{n,q}+C_{n,q}\|c_1\|_{n,q}\leq M,
\]
where $M$ depends on $n,q,M_b$ and $M_c$. Then, for the $\xi_{n,\lambda}$ and the $\delta_{n,q,\lambda,M}>0$ from Proposition~\ref{MoserC} and \eqref{eq:subsol}, if
\begin{equation}\label{eq:smallk}
\left\|\frac{k(bu-f)}{v}\right\|_{L^{n,1}(B_r)}<\delta_{n,q,\lambda,M},\quad\|c_2\|_{L^{n,\infty}(B_r)}<\xi_{n,\lambda},\quad\left\|\frac{k(du-g)}{v}\right\|_{L^{\frac{n}{2},1}(B_r)}<\delta_{n,q,\lambda,M},
\end{equation}
then $v^k$ satisfies the estimate
\[
\sup_{B_{r/2}}v^k\leq C\fint_{B_r}v^k,
\]
where $C$ depends on $n,q,\lambda,\|A\|_{\infty}$ and $M$. It is true that \eqref{eq:smallk} holds for some $k\in(-a,0)$, depending on $n,q,\lambda,M_b,M_c$ and $M_d$; hence, for this $k$,
\begin{equation}\label{eq:plugk}
\left(\fint_{B_r}v^k\right)^{\frac{1}{k}}\leq C(\sup_{B_{r/2}}v^k)^{\frac{1}{k}}=C\inf_{B_{r/2}}v,
\end{equation}
where $C$ depends on $n,q,\lambda,\|A\|_{\infty},M_b,M_c$ and $M_d$. Since $-\frac{a}{k}>1$, H{\"o}lder's inequality implies that
\[
\fint_{B_r}v^k\leq\left(\fint_{B_r}v^{-a}\right)^{-\frac{k}{a}}\Rightarrow \left(\fint_{B_r}v^k\right)^{\frac{1}{k}}\geq\left(\fint_{B_r}v^{-a}\right)^{-\frac{1}{a}}\geq C\left(\fint_{B_r}v^a\right)^{\frac{1}{a}},
\]
where we used \eqref{eq:alpha} for the last step. Then, plugging the last estimate in \eqref{eq:plugk}, and using the definition of $v$, the proof is complete.
\end{proof}

\subsection{Estimates on the boundary}

We now consider the analogue of Proposition~\ref{lowerBound} close to the boundary. We will need the analogue of the definition of $\tilde{u}$ in \eqref{eq:utildeDfn}, from \cite[Theorem 8.26]{Gilbarg}: if $u\geq 0$ is a function in $\Omega$ and $\partial\Omega\cap B_{2r}\neq\emptyset$, we define
\begin{equation}\label{eq:ubarDfn}
m_u=\inf_{\partial\Omega\cap B_{2r}}u,\qquad\bar{u}(x)=\left\{\begin{array}{l l}\inf\{u(x),m_u\}, & x\in B_{2r}\cap \Omega \\
m_u, & x\in B_{2r}\setminus\Omega\end{array}\right..
\end{equation}

The following is the analogue of Lemma~\ref{JohnNirenberg} close to the boundary.

\begin{lemma}\label{JohnNirenbergBdry}
Let $A$ be uniformly elliptic and bounded in $\Omega\cap B_{2r}$, with ellipticity $\lambda$, and $b,c\in L^{n,\infty}(\Omega\cap B_{2r})$, $d\in L^{\frac{n}{2},\infty}(\Omega\cap B_{2r})$. Let also $u\in W^{1,2}(B_{2r})$ be a nonnegative supersolution to $-\dive(A\nabla u+bu)+c\nabla u+du\geq 0$ in $B_{2r}$, and consider the function $\bar{u}$ from \eqref{eq:ubarDfn}. If $\inf_{\Omega\cap B_{2r}}u>0$ and $m_u>0$, then there exists a constant $a=a_n$ such that
\[
\fint_{B_r}\bar{u}^a\fint_{B_r}\bar{u}^{-a}\leq C,
\]
where $C$ depends on $n,\lambda,\|A\|_{\infty}$, $\|b\|_{n,\infty}$, $\|c\|_{n,\infty}$ and $\|d\|_{\frac{n}{2},\infty}$.
\end{lemma}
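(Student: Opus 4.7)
The plan is to adapt the proof of Lemma \ref{JohnNirenberg} by choosing a test function that is compatible with the boundary values of $\bar{u}$. Since $\bar{u}=m_u$ on $\partial\Omega\cap B_{2r}$ (in the trace sense) and outside $\Omega$, the natural replacement for $\phi^2 u^{-1}$ is
\[
\eta = \phi^2\bigl(\bar{u}^{-1}-m_u^{-1}\bigr),
\]
where $\phi\geq 0$ is a standard cutoff supported in a ball $B_{2s}\subseteq B_{2r}$ with $\phi\equiv 1$ on $B_s$ and $|\nabla\phi|\leq C/s$. Because $\bar{u}\leq m_u$ one has $\eta\geq 0$, and since $\bar{u}=m_u$ on $\partial\Omega\cap B_{2r}$ and outside $\Omega$, $\eta$ vanishes there and lies in $W_0^{1,2}(\Omega\cap B_{2r})$. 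The hypothesis $\inf_{\Omega\cap B_{2r}}u>0$ guarantees that $\bar{u}^{-1}$ is bounded and in $W^{1,2}$, so $\eta$ is admissible in the definition of supersolution.

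Plugging $\eta$ into the supersolution inequality and expanding $\nabla\eta = 2\phi\nabla\phi(\bar{u}^{-1}-m_u^{-1})-\phi^2\bar{u}^{-2}\nabla\bar{u}$, I would split the integration into the set $E=\{u<m_u\}\cap\Omega\cap B_{2s}$ and its complement in $\Omega\cap B_{2s}$. On the complement, $\bar{u}=m_u$ and $\nabla\bar{u}=0$, so all integrand pieces vanish, and only $E$ contributes. On $E$ we have $\bar{u}=u$ and $\nabla\bar{u}=\nabla u$, so the principal term contributes
\[
-\int_E \phi^2 u^{-2}\,A\nabla u\cdot\nabla u \leq -\lambda\int_E\phi^2\frac{|\nabla u|^2}{u^2},
\]
which on rearranging becomes the $\log$-gradient term. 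The other pieces coming from $\nabla\phi$, $b$, $c$, and $d$ are controlled via Cauchy--Schwarz and Cauchy's inequality with $\varepsilon$, using the pointwise bounds $|\bar{u}^{-1}-m_u^{-1}|\leq u^{-1}$ and $|1-u/m_u|\leq 1$ on $E$, exactly as in Lemma \ref{JohnNirenberg}, and Hölder's inequality in Lorentz spaces \eqref{eq:Holder} to bound $\||b|^2+|c|^2+|d|\|_{n/2,\infty}\cdot\|\chi_{B_{2s}}\|_{n/(n-2),1}\leq Cs^{n-2}$. Absorbing the $\varepsilon\phi^2|\nabla u|^2/u^2$ terms into the left-hand side yields the Caccioppoli-type bound
\[
\int_{B_{2r}}\phi^2\,|\nabla\log\bar{u}|^2 = \int_E\phi^2\frac{|\nabla u|^2}{u^2}\leq C s^{n-2},
\]
since $\nabla\log\bar{u}$ vanishes outside $E$ in $B_{2r}$ (both on $\Omega\cap\{u\geq m_u\}$ and outside $\Omega$).

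Having this Caccioppoli estimate for $\log\bar{u}$ on every ball $B_{2s}\subseteq B_{2r}$, the conclusion follows from the Poincaré inequality together with the John--Nirenberg inequality applied to $\log\bar{u}$, exactly as in \cite[page 586]{MoserHarnack}: there exist $a=a_n>0$ and a constant $\kappa$ such that $\fint_{B_r}\exp(a|\log\bar{u}-\kappa|)\leq C$, which immediately yields $\fint_{B_r}\bar{u}^a\fint_{B_r}\bar{u}^{-a}\leq C$ with the stated dependence on the coefficients. The main technical obstacle is the careful bookkeeping on $E$ versus its complement, in particular verifying that $\eta$ lies in $W_0^{1,2}(\Omega\cap B_{2s})$ (where the vanishing trace on $\partial\Omega$ is the key feature provided by the truncation at $m_u$) and ensuring that the boundary contributions from the region $B_{2r}\setminus\Omega$ do not spoil the Caccioppoli estimate, which is exactly what the extension of $\bar{u}$ by $m_u$ guarantees.
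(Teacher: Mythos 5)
Your proposal is correct and follows the same route as the paper: both use the test function $\phi^2(\bar{u}^{-1}-m_u^{-1})$ (equivalently $v\phi^2$ with $v=\bar{u}^{-1}-m_u^{-1}$, as in Theorem 8.26 of Gilbarg--Trudinger), observe that $v>0$ precisely where $\bar u=u$ so only that set contributes, and then run the Caccioppoli/Poincar\'e/John--Nirenberg argument of Lemma~\ref{JohnNirenberg} on $\log\bar u$. Your bookkeeping on $E$ versus its complement and the pointwise bounds $u(\bar u^{-1}-m_u^{-1})=1-u/m_u\in(0,1)$ on $E$ are exactly the details the paper leaves implicit.
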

\begin{proof}
As in the proof of \cite[Theorem 8.26]{Gilbarg}, set $v=\bar{u}^{-1}-m_u^{-1}\in W^{1,2}(\Omega\cap B_{2r})$, which is nonnegative in $\Omega\cap B_{2r}$ and vanishes on $\partial\Omega\cap B_{2r}$. Then, considering the test function $v\phi^2$, where $\phi$ is a suitable cutoff function, and using that $v>0$ if and only if $\bar{u}=u$, the proof follows by an argument as in the proof of Lemma~\ref{JohnNirenberg}.
\end{proof}

Using the previous lemma, we can show the following estimate.

\begin{prop}\label{lowerBoundBoundary}
Let $\Omega\subseteq\bR^n$ be a domain, $B_{2r}$ be a ball of radius $2r$, and let $A$ be uniformly elliptic and bounded in $\Omega\cap B_{2r}$, with ellipticity $\lambda$. Let also $b,f\in L^{n,1}(\Omega\cap B_{2r})$, $c_1\in L^{n,q}(\Omega\cap B_{2r})$ for some $q<\infty$, and $d,g\in L^{\frac{n}{2},1}(\Omega\cap B_{2r})$, with $\|b\|_{n,1}\leq M_b$, $\|c_1\|_{n,q}\leq M_c$ and $\|d\|_{\frac{n}{2},1}\leq M_d$.
	
There exist $a=a_n>0$ and $\xi=\xi_{n,\lambda}>0$ such that, if $c_2\in L^{n,\infty}(\Omega\cap B_{2r})$ with $\|c_2\|_{n,\infty}<\xi$, then for any nonnegative supersolution $u\in W^{1,2}(\Omega\cap B_{2r})$ to $-\dive(A\nabla u+bu)+(c_1+c_2)\nabla u+du\geq -\dive f+g$, we have that
\[
\left(\fint_{B_r}\bar{u}^a\right)^{\frac{1}{a}}\leq C\inf_{B_{r/2}}\bar{u}+C\|f\|_{L^{n,1}(\Omega\cap B_{2r})}+C\|g\|_{L^{\frac{n}{2},1}(\Omega\cap B_{2r})},
\]
where $\bar{u}$ is defined in \eqref{eq:ubarDfn}, and where $C$ depends on $n,q,\lambda,\|A\|_{\infty}$, $M_b,M_c$ and $M_d$.
\end{prop}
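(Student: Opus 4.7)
The plan is to carry out the boundary analogue of the proof of Proposition~\ref{lowerBound}, substituting Lemma~\ref{JohnNirenbergBdry} for Lemma~\ref{JohnNirenberg} and Proposition~\ref{MoserCBoundary} for Proposition~\ref{MoserC}. After adding $\delta>0$ to $u$ and letting $\delta\to 0$ at the end, I may assume $\inf_{\Omega\cap B_{2r}}u>0$. Writing $F=\|f\|_{L^{n,1}(\Omega\cap B_{2r})}+\|g\|_{L^{\frac{n}{2},1}(\Omega\cap B_{2r})}$ and $v=u+F$, the same computation as in Proposition~\ref{lowerBound} shows that $v$ is a supersolution to a homogeneous equation whose modified ``$b$''-coefficient $(bu-f)/v$ has $L^{n,1}$ norm at most $C_n(M_b+1)$ and whose modified ``$d$''-coefficient $(du-g)/v$ has $L^{\frac{n}{2},1}$ norm at most $C_n(M_d+1)$. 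Because $m_v=m_u+F>0$ (either because $F>0$, or because $m_u\ge\delta>0$ after the initial shift), Lemma~\ref{JohnNirenbergBdry} yields an exponent $a=a_n>0$ and a constant $C$ with
\[
\fint_{B_r}\bar v^a\cdot\fint_{B_r}\bar v^{-a}\le C.
\]

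Next, for any $k\in(-1,0)$, Lemma~\ref{Pass} presents $v^k$ as a $W^{1,2}(\Omega\cap B_{2r})$ subsolution to a homogeneous equation whose new ``$b$''-piece $k(bu-f)/v$ has $L^{n,1}$ norm $\le C_n|k|(M_b+1)$, whose new drift $(k-1)(bu-f)/v+c_1$ has $L^{n,q}$ norm bounded by some $M=M(n,q,M_b,M_c)$, whose $L^{n,\infty}$ piece remains $c_2$, and whose new ``$d$''-piece $k(du-g)/v$ has $L^{\frac{n}{2},1}$ norm $\le C_n|k|(M_d+1)$. Setting $\xi:=\xi_{n,\lambda}$ to be the threshold from Proposition~\ref{MoserCBoundary} and then choosing $|k|=|k|(n,q,\lambda,M_b,M_c,M_d)\in(0,a)$ small enough so that these new $b$- and $d$-norms drop below the threshold $\delta_{n,q,\lambda,M}$ of that proposition, the hypotheses of Proposition~\ref{MoserCBoundary} are satisfied by $v^k$ on the ball $B_r$, with zero right-hand side and $p=1$.

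The crucial structural identification is that $\widetilde{v^k}=\bar v^k$ on all of $B_r$. Indeed, since $v>0$ and $k<0$ we have $s_{v^k}=\sup_{\partial\Omega\cap B_r}v^k=m_v^k$, so for $x\in\Omega\cap B_r$,
\[
\widetilde{v^k}(x)=\max\{v^k(x),m_v^k\}=(\min\{v(x),m_v\})^k=\bar v(x)^k,
\]
while both functions equal $m_v^k=\bar v(x)^k$ off $\Omega$. Proposition~\ref{MoserCBoundary} therefore gives $\sup_{\Omega\cap B_{r/2}}\bar v^k\le C\fint_{B_r}\bar v^k$; since $k<0$, $\sup\bar v^k=(\inf\bar v)^k$, yielding
\[
\left(\fint_{B_r}\bar v^k\right)^{1/k}\le C\inf_{\Omega\cap B_{r/2}}\bar v.
\]

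Finally, because $-a/k>1$, Hölder's inequality gives $\fint_{B_r}\bar v^k\le\bigl(\fint_{B_r}\bar v^{-a}\bigr)^{-k/a}$, hence $(\fint\bar v^k)^{1/k}\ge(\fint\bar v^{-a})^{-1/a}$, and combining with the John--Nirenberg-type bound from the first step produces $(\fint_{B_r}\bar v^a)^{1/a}\le C\inf_{\Omega\cap B_{r/2}}\bar v$. Since $\bar v=\bar u+F$, letting $\delta\to 0$ concludes. The main obstacle is the verification of $\widetilde{v^k}=\bar v^k$, which is precisely what converts the boundary Moser bound (which only controls $\widetilde{v^k}$) into a bound for $\bar v^k$ and hence for $\bar u$; once this identification is made, the smallness bookkeeping for the coefficients of $v^k$ and the Hölder/John--Nirenberg finale proceed in parallel with the interior argument.
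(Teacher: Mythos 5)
Your proposal follows the paper's proof essentially verbatim: both reduce to $f,g\equiv0$ via the shift $v=u+F$ (the paper abbreviates this as ``we can assume $f,g\equiv 0$''), both invoke Lemma~\ref{JohnNirenbergBdry} for the exponent $a$, both pass $v^k$ through Lemma~\ref{Pass} and Proposition~\ref{MoserCBoundary} for a suitable small $|k|<a$, and both rest on the identification $\widetilde{v^k}=\bar v^k$. If anything, you spell out more of the bookkeeping (the norm bounds on the modified coefficients, the verification of the tilde/bar identity, and the Hölder/John--Nirenberg combination) than the paper's very terse proof does, but the argument is the same one.
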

\begin{proof}
As in the proof of Proposition~\ref{lowerBound}, we can assume that $\inf_{\Omega\cap B_{2r}}u>0$, $m_u>0$, and $f,g\equiv 0$. Let $a=a_n$ be as in Lemma~\ref{JohnNirenbergBdry}. Then, Lemma~\ref{Pass} and Proposition~\ref{MoserCBoundary} show that, for suitable $k\in(-a,0)$, if $w_k=u^k$ and $\tilde{w_k}$ is as in \eqref{eq:utildeDfn}, we have that
\[
\sup_{\Omega\cap B_{r/2}}\tilde{w_k}\leq C\fint_{B_r}\tilde{w_k}\leq C\left(\fint_{B_r}\tilde{w_k}^{-\frac{a}{k}}\right)^{-\frac{k}{a}}.
\]
Since $\tilde{w_k}=\bar{u}^k$, the proof is complete using also Lemma~\ref{JohnNirenbergBdry}.
\end{proof}

\subsection{The Harnack inequality, and local continuity}

We now show the Harnack inequality in the cases when $b,d$ are small, or when $c,d$ are small.

\begin{thm}\label{HarnackForB}
Let $A$ be uniformly elliptic and bounded in $B_{2r}$, with ellipticity $\lambda$. Let also $b,f\in L^{n,1}(B_{2r})$ with $\|b\|_{n,1}\leq M$, and $g\in L^{\frac{n}{2},1}(B_{2r})$. 

There exists $\e_{n,\lambda,M}>0$ such that, if $c\in L^{n,\infty}(B_{2r})$ and $d\in L^{\frac{n}{2},1}(B_{2r})$ with $\|c\|_{n,\infty}<\e$ and $\|d\|_{\frac{n}{2},1}<\e$, then for any nonnegative solution $u\in W^{1,2}(B_{2r})$ to $-\dive(A\nabla u+bu)+c\nabla u+du =-\dive f+g$, we have that
\[
\sup_{B_r}u\leq C\inf_{B_r}u+C\|f\|_{L^{n,1}(B_{2r})}+C\|g\|_{L^{\frac{n}{2},1}(B_{2r})},
\]
where $C$ depends on $n,\lambda,\|A\|_{\infty}$ and $M$.
\end{thm}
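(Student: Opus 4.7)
The plan is to combine Proposition~\ref{MoserB} (local boundedness of subsolutions) with Proposition~\ref{lowerBound} (the reverse Moser estimate for supersolutions) at matched scales on small sub-balls of $B_{2r}$, and then to propagate the resulting small-scale Harnack inequality to $B_r$ by a standard chain argument along line segments.

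First I would fix $\e = \e_{n,\lambda,M}>0$ small enough that both Proposition~\ref{MoserB} (with the given bound $\|b\|_{n,1}\leq M$) and Proposition~\ref{lowerBound} (with $c_1\equiv 0$, $c_2 = c$, $q = 2$, $M_b = M$, $M_c = 0$, $M_d\leq\e$) are applicable; in particular taking $\e\leq\xi_{n,\lambda}$ together with the smallness required by Proposition~\ref{MoserB} suffices. A key observation is that Lorentz quasi-norms decrease under restriction to subsets (since $\mu_{f\chi_E}\leq \mu_{f\chi_F}$ whenever $E\subseteq F$), so the bound $\|b\|_{n,1}\leq M$ and the smallness of $c,d$ transfer unchanged to every sub-ball of $B_{2r}$, keeping the constants below depending only on $n,\lambda,\|A\|_{\infty},M$.

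Next, letting $a=a_n$ be the exponent from Proposition~\ref{lowerBound} and setting $\rho=r/4$, for every $x_0\in B_r$ we have $B_{4\rho}(x_0)\subseteq B_{2r}$. I would apply Proposition~\ref{MoserB} on the outer ball $B_{2\rho}(x_0)$ with $p=a$ to get
\[
\sup_{B_\rho(x_0)}u\leq C\left(\fint_{B_{2\rho}(x_0)}u^a\right)^{1/a}+CF,
\]
and Proposition~\ref{lowerBound} on the outer ball $B_{4\rho}(x_0)$ (so that its propositional ``$r$'' equals $2\rho$) to get
\[
\left(\fint_{B_{2\rho}(x_0)}u^a\right)^{1/a}\leq C\inf_{B_\rho(x_0)}u+CF,
\]
where $F:=\|f\|_{L^{n,1}(B_{2r})}+\|g\|_{L^{\frac{n}{2},1}(B_{2r})}$. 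Concatenating yields the small-scale Harnack inequality
\[
\sup_{B_\rho(x_0)}u\leq C\inf_{B_\rho(x_0)}u+CF\qquad\text{for every }x_0\in B_r,
\]
with $C=C(n,\lambda,\|A\|_{\infty},M)$.

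Finally, given $x,y\in B_r$, the segment $\overline{xy}$ lies in $B_r$ by convexity, and I would partition it into $N\leq 9$ consecutive points $x=y_0,y_1,\ldots,y_N=y$ with $|y_{j+1}-y_j|<\rho$, so that $y_{j+1}\in B_\rho(y_j)$. The small-scale Harnack on $B_\rho(y_j)$ then yields $u(y_j)\leq Cu(y_{j+1})+CF$, and iterating $N$ times gives $u(x)\leq C^Nu(y)+C'F$; taking the supremum over $x$ and the infimum over $y$ in $B_r$ completes the proof. The main technical point is the scale mismatch between the two building blocks: Proposition~\ref{MoserB} passes from the $L^a$ average over $B_{2\rho}$ to $\sup_{B_\rho}$, while Proposition~\ref{lowerBound} controls that same average by $\inf_{B_\rho}$ only when $u$ is available on $B_{4\rho}$. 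The choice $\rho=r/4$ is exactly what accommodates both conditions simultaneously for every center in $B_r$; the rest of the argument is the standard Moser--Harnack chaining.
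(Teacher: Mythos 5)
Your proof is correct and takes essentially the same approach as the paper: the paper's own argument is precisely "combine Proposition~\ref{MoserB} with $p=a_n$ and Proposition~\ref{lowerBound} with $c_1\equiv 0$, after covering $B_r$ with balls of radius $r/4$", which is what you do, with the covering/chaining step written out in more detail.
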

\begin{proof}
The proof is a combination of Proposition~\ref{MoserB} (choosing $p=a_n$ in \eqref{eq:MoserForB}, as in Proposition~\ref{lowerBound}), and Proposition~\ref{lowerBound}, (considering $q=n$ and $c_1\equiv 0$), after also covering $B_r$ with balls of radius $r/4$.
\end{proof}

\begin{thm}\label{HarnackForC}
Let $A$ be uniformly elliptic and bounded in $B_{2r}$, with ellipticity $\lambda$, and $q<\infty$, $c_1\in L^{n,q}(B_{2r})$ with $\|c_1\|_{n,q}\leq M$. Let also $f\in L^{n,1}(B_{2r})$, $g\in L^{\frac{n}{2},1}(B_{2r})$.
	
There exist $\xi=\xi_{n,\lambda}>0$ and $\delta=\delta_{n,q,\lambda,M}>0$ such that, if $b\in L^{n,1}(B_{2r})$, $c_2\in L^{n,\infty}(B_{2r})$ and $d\in L^{\frac{n}{2},1}(B_{2r})$ with $\|b\|_{n,1}<\delta$, $\|c_2\|_{n,\infty}<\xi$ and $\|d\|_{\frac{n}{2},1}<\delta$, then for any nonnegative solution $u\in W^{1,2}(B_{2r})$ to $-\dive(A\nabla u+bu)+c\nabla u+du=-\dive f+g$, we have that
\[
\sup_{B_r}u\leq C\inf_{B_r}u+C\|f\|_{L^{n,1}(B_{2r})}+C\|g\|_{L^{\frac{n}{2},1}(B_{2r})},
\]
where $C$ depends on $n,q,\lambda,\|A\|_{\infty}$ and $M$.
\end{thm}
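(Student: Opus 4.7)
My plan is to mirror the proof of Theorem~\ref{HarnackForB}, swapping the roles of $b$ (which must now be small) and $c_1$ (which is allowed to have large $L^{n,q}$ norm). Concretely, I would combine the Moser-type upper bound of Proposition~\ref{MoserC}, applied with the exponent $p=a_n$ furnished by Proposition~\ref{lowerBound}, with Proposition~\ref{lowerBound} itself, and then conclude by chaining the resulting local Harnack inequality through a finite covering of $B_r$.

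I would first align the constants. Let $a=a_n$ and $\xi_1=\xi_{n,\lambda}$ be the quantities produced by Proposition~\ref{lowerBound}, and let $\xi_2=\xi_{n,\lambda}$ and $\delta_0=\delta_{n,q,\lambda,M}$ be the thresholds produced by Proposition~\ref{MoserC} applied with $p=a_n$. Setting $\xi=\min\{\xi_1,\xi_2\}$ and $\delta=\delta_0$ ensures that, whenever $\|b\|_{n,1}<\delta$, $\|c_2\|_{n,\infty}<\xi$ and $\|d\|_{\frac{n}{2},1}<\delta$ on $B_{2r}$, the same smallness (together with $\|c_1\|_{n,q}\leq M$) holds on every sub-ball, since Lorentz seminorms over subsets are dominated by those over the ambient set.

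For each $x_0\in B_r$ the inclusion $B_r(x_0)\subseteq B_{2r}$ holds by the triangle inequality, so Proposition~\ref{lowerBound} applied on $B_r(x_0)$ (with $a=a_n$) gives
\[
\left(\fint_{B_{r/2}(x_0)}u^{a_n}\right)^{1/a_n}\leq C\inf_{B_{r/4}(x_0)}u+C\|f\|_{L^{n,1}(B_{2r})}+C\|g\|_{L^{\frac{n}{2},1}(B_{2r})},
\]
while Proposition~\ref{MoserC} applied on $B_{r/2}(x_0)$ with $p=a_n$ gives
\[
\sup_{B_{r/4}(x_0)}u\leq C\left(\fint_{B_{r/2}(x_0)}u^{a_n}\right)^{1/a_n}+C\|f\|_{L^{n,1}(B_{2r})}+C\|g\|_{L^{\frac{n}{2},1}(B_{2r})}.
\]
Composing the two yields the local Harnack inequality
\[
\sup_{B_{r/4}(x_0)}u\leq C\inf_{B_{r/4}(x_0)}u+C\|f\|_{L^{n,1}(B_{2r})}+C\|g\|_{L^{\frac{n}{2},1}(B_{2r})}
\]
for every $x_0\in B_r$, with $C$ depending only on $n,q,\lambda,\|A\|_\infty$ and $M$. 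A standard covering of $B_r$ by a dimensional number of balls of radius $r/4$ centred in $B_r$, together with the usual chaining argument iterating the local Harnack inequality through overlapping balls, then upgrades this to the desired global estimate $\sup_{B_r}u\leq C\inf_{B_r}u+C(\mathrm{rhs})$.

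I do not anticipate a genuine obstacle: the two key inputs are already established, and the argument is structurally identical to that sketched for Theorem~\ref{HarnackForB}. The only bookkeeping point is reconciling the two scale-invariant smallness thresholds produced independently by Propositions~\ref{MoserC} and~\ref{lowerBound}, which is handled by passing to the minimum and has no effect on the final dependence of the constant.
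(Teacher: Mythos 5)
Your proof is correct and follows exactly the route the paper takes: the paper's proof of this theorem is the single sentence ``The proof follows by a combination of Propositions~\ref{MoserC} and \ref{lowerBound},'' mirroring the argument sketched for Theorem~\ref{HarnackForB}. You have simply filled in the same steps (choosing $p=a_n$, reconciling the smallness thresholds, and the covering/chaining argument) in more detail.
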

\begin{proof}
The proof follows by a combination of Propositions~\ref{MoserC} and \ref{lowerBound}.
\end{proof}

We now turn to local continuity of solutions. For the following theorem, for $\rho\leq 2r$, we set
\begin{equation}\label{eq:QDfn}
Q_{b,d}(\rho)=\sup\left\{\|b\|_{L^{n,1}(B'_{\rho})}+\|d\|_{L^{\frac{n}{2},1}(B'_{\rho})}:B'_{\rho}\subseteq B_{2r}\right\},
\end{equation}
where $B_{\rho}'$ runs over all the balls of radius $\rho$ that are subsets of $B_{2r}$. Also, we will follow the argument on \cite[pages 200-202]{Gilbarg}.

\begin{thm}\label{ContinuityB}
Let $A$ be uniformly elliptic and bounded in $B_{2r}$, with ellipticity $\lambda$. Let also $b,f\in L^{n,1}(B_{2r})$ with $\|b\|_{n,1}\leq M$, $g\in L^{\frac{n}{2},1}(B_{2r})$, and $\mu\in(0,1)$. 

For every $\mu\in(0,1)$, there exists $\e=\e_{n,\lambda,M}>0$ and $\alpha=\alpha_{n,\lambda,\|A\|_{\infty},M,\mu}\in(0,1)$ such that, if $c\in L^{n,\infty}(B_{2r})$ and $d\in L^{\frac{n}{2},1}(B_{2r})$ with $\|c\|_{n,\infty}<\e$ and $\|d\|_{\frac{n}{2},1}<\e$, then for any solution $u\in W^{1,2}(B_{2r})$ to $-\dive(A\nabla u+bu)+c\nabla u+du =-\dive f+g$, we have that
\begin{multline*}
|u(x)-u(y)|\leq C\left(\frac{|x-y|^{\alpha}}{r^{\alpha}}+Q_{b,d}(|x-y|^{\mu}r^{1-\mu})\right)\left(\fint_{B_{2r}}|u|+Q_{f,g}(2r)\right)+CQ_{f,g}(|x-y|^{\mu}r^{1-\mu}),
\end{multline*}
for any $x,y\in B_r$, where $Q$ is defined in \eqref{eq:QDfn} and $C$ depends on $n,\lambda,\|A\|_{\infty}$ and $M$.	
\end{thm}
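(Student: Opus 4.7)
The plan is a classical De Giorgi--Nash--Moser oscillation-decay argument based on Harnack's inequality (Theorem~\ref{HarnackForB}), iterated via the Gilbarg--Trudinger lemma \cite[Lemma 8.23]{Gilbarg}. Fix $x_0\in B_r$ and, for $\rho\in(0,r/2]$, set
\[
M(\rho)=\operatorname*{ess\,sup}_{B_\rho(x_0)}u,\qquad m(\rho)=\operatorname*{ess\,inf}_{B_\rho(x_0)}u,\qquad \omega(\rho)=M(\rho)-m(\rho),
\]
and note $B_\rho(x_0)\subseteq B_{3r/2}\subseteq B_{2r}$. The goal is to prove $\omega(\rho/2)\leq\gamma\omega(\rho)+\sigma(\rho)$ for some $\gamma\in(0,1)$ and nondecreasing $\sigma$, and then iterate.

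The first step derives the oscillation decay from Harnack. The functions $v_1=M(\rho)-u$ and $v_2=u-m(\rho)$ are nonnegative on $B_\rho(x_0)$ and a direct computation shows that they solve
\[
-\dive(A\nabla v_i+bv_i)+c\nabla v_i+dv_i=-\dive F_i+G_i \quad\text{in }B_\rho(x_0),
\]
with $F_1=bM(\rho)-f$, $G_1=dM(\rho)-g$ and $F_2=f-bm(\rho)$, $G_2=g-dm(\rho)$. The smallness hypotheses on $c,d$ transfer to $B_\rho(x_0)$ and $\|b\|_{L^{n,1}(B_\rho(x_0))}\leq M$, so Theorem~\ref{HarnackForB} applies to each of $v_1,v_2$ on $B_\rho(x_0)$ (with Harnack ball $B_{\rho/2}(x_0)$) with a common constant $C_0=C_0(n,\lambda,\|A\|_\infty,M)$. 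Summing the two resulting inequalities and rearranging in the standard way (cf.\ \cite[Theorem 8.22]{Gilbarg}) gives
\[
\omega(\rho/2)\leq\gamma\,\omega(\rho)+\sigma(\rho),\qquad \gamma=\frac{C_0-1}{C_0+1}\in(0,1),
\]
where $\sigma(\rho)=C\,\|u\|_{L^\infty(B_{3r/2})}Q_{b,d}(\rho)+C\,Q_{f,g}(\rho)$; here we used the crude bounds $|M(\rho)|,|m(\rho)|\leq\|u\|_{L^\infty(B_{3r/2})}$.

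The second step iterates. Since $Q_{b,d}$ and $Q_{f,g}$ are nondecreasing, so is $\sigma$, and \cite[Lemma 8.23]{Gilbarg} with $\tau=1/2$, $R_0=r/2$ yields for each $\mu\in(0,1)$
\[
\omega(\rho)\leq C\left(\frac{\rho}{r}\right)^{\alpha}\omega(r/2)+C\,\sigma(\rho^\mu r^{1-\mu}),\qquad \rho\in(0,r/2],
\]
with $\alpha=\alpha(\gamma,\mu)\in(0,1)$; constants arising from $R_0=r/2$ are absorbed into $C$, and by monotonicity of $\sigma$ one has $\sigma(\rho^\mu(r/2)^{1-\mu})\leq\sigma(\rho^\mu r^{1-\mu})$. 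To control $\omega(r/2)\leq 2\|u\|_{L^\infty(B_{3r/2})}$, apply Proposition~\ref{MoserB} (and its analogue for $-u$) on a finite covering of $B_{3r/2}$ by balls of radius $r/4$ whose doubles lie in $B_{2r}$, to obtain
\[
\|u\|_{L^\infty(B_{3r/2})}\leq C\left(\fint_{B_{2r}}|u|+Q_{f,g}(2r)\right).
\]

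The proof then concludes as follows. For $x,y\in B_r$ with $|x-y|\leq r/4$, set $x_0=x$ and use closed-ball oscillation to obtain $|u(x)-u(y)|\leq\omega(\rho)$ for $\rho$ arbitrarily close to $|x-y|$; substituting the above estimates gives the claim. The range $|x-y|>r/4$ is immediate from the $L^\infty$ bound, since then $(|x-y|/r)^\alpha$ is bounded below and already dominates $|u(x)-u(y)|\leq 2\|u\|_{L^\infty(B_r)}$.

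The principal obstacle is the careful bookkeeping when shifting $u$ by constants: the shifts contribute $|M(\rho)|\,\|b\|_{L^{n,1}(B_\rho)}+|m(\rho)|\,\|d\|_{L^{n/2,1}(B_\rho)}$ to the right-hand sides of the equations for $v_1,v_2$, which is what produces the term $\|u\|_{L^\infty}Q_{b,d}(\cdot)$ in $\sigma$ and, after iteration through \cite[Lemma 8.23]{Gilbarg}, gives rise to the $Q_{b,d}(|x-y|^\mu r^{1-\mu})$ factor in the final estimate. A secondary technical point is controlling $\|u\|_{L^\infty(B_{3r/2})}$ via $\fint_{B_{2r}}|u|$, which requires a covering argument since Proposition~\ref{MoserB} is stated only for the $1{:}2$ ratio of radii.
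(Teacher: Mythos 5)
Your proof takes essentially the same route as the paper: you define $v_1=M(\rho)-u$ and $v_2=u-m(\rho)$, identify the modified right-hand sides $-\dive F_i+G_i$, apply Theorem~\ref{HarnackForB} to each to get the oscillation-decay inequality $\omega(\rho/2)\leq\gamma\omega(\rho)+\sigma(\rho)$ with $\sigma$ built from $Q_{b,d}$ and $Q_{f,g}$, iterate via \cite[Lemma 8.23]{Gilbarg}, and finally bound the $L^\infty$ norm of $u$ by Proposition~\ref{MoserB}. The only difference is cosmetic: you are more explicit about centering the balls $B_\rho(x_0)$ at a moving point $x_0\in B_r$, the restriction $\rho\leq r/2$ needed to keep $B_\rho(x_0)\subseteq B_{2r}$, the covering of $B_{3r/2}$ by small balls to control $\|u\|_{L^\infty}$, and the trivial case $|x-y|>r/4$, all of which the paper leaves implicit.
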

\begin{proof}
Let $\rho\in(0,r]$, and set $M(\rho)=\sup_{B_{\rho}}u$, $m(\rho)=\inf_{B_{\rho}}u$. Then $v_1=M(\rho)-u$ is nonnegative in $B_{\rho}$, and solves the equation
\[
-\dive(A\nabla v_1+bv_1)+c\nabla v_1+dv_1=-\dive(M(\rho)b-f)+(M(\rho) d-g)
\]
in $B_{\rho}$. 
Hence, from Theorem~\ref{HarnackForB}, \eqref{eq:Seminorm} and \eqref{eq:QDfn}, we obtain that
\begin{align}\label{eq:up1}
\begin{split}
M(\rho)-m\left(\frac{\rho}{2}\right)&=\sup_{B_{\rho/2}}v_1\leq C\inf_{B_{\rho/2}}v_1+C\|M(\rho)b-f\|_{L^{n,1}(B_{\rho})}+C\|M(\rho) d-g\|_{L^{\frac{n}{2},1}(B_{\rho})}\\
&=C\left(M(\rho)-M\left(\frac{\rho}{2}\right)\right)+C\sup_{B_r}|u|\cdot Q_{b,d}(\rho)+CQ_{f,g}(\rho),
\end{split}
\end{align}
where $C$ depends on $n,\lambda,\|A\|_{\infty}$ and $M$. Moreover, $v_2=u-m(\rho)$ is nonnegative in $B_{\rho}$, and solves the equation
\[
-\dive(A\nabla v_2+bv_2)+c\nabla v_2+dv_2=-\dive(f-m(\rho)b)+(g-m(\rho)d)
\]
in $B_{\rho}$. Hence, from Theorem~\ref{HarnackForB}, as in \eqref{eq:up1},
\begin{equation}\label{eq:up2}
M\left(\frac{\rho}{2}\right)-m(\rho)\leq C\left(m\left(\frac{\rho}{2}\right)-m(\rho)\right)+C\sup_{B_r}|u|\cdot Q_{b,d}(\rho)+CQ_{f,g}(\rho).
\end{equation}
Adding \eqref{eq:up1} and \eqref{eq:up2} and defining $\omega(\rho)=M(\rho)-m(\rho)$, we obtain that
\[
\omega\left(\frac{\rho}{2}\right)\leq\theta_0\omega(\rho)+C\sup_{B_r}|u|\cdot Q_{b,d}(\rho)+CQ_{f,g}(\rho),
\]
where $\theta_0=\frac{C-1}{C+1}\in(0,1)$. Then, \cite[Lemma 8.23]{Gilbarg} shows that, for $\rho\leq r$,
\[
\omega(\rho)\leq C\frac{\rho^{\alpha}}{r^{\alpha}}\omega(r)+C\sup_{B_r}|u|\cdot Q_{b,d}(\rho^{\mu}r^{1-\mu})+CQ_{f,g}(\rho^{\mu}r^{1-\mu}),
\]
where $C$ depends on $n,\lambda,\|A\|_{\infty},M$, and $\alpha=\alpha_{n,\lambda,\|A\|_{\infty},M,\mu}$. We then bound $\sup_{B_r}|u|$ using Proposition~\ref{MoserB} (applied to $u$ and $-u$, for $p=1$), which completes the proof.
\end{proof}

Finally, based on Proposition~\ref{MoserC} and Theorem~\ref{HarnackForC}, we obtain the following theorem when $b,d$ are small.

\begin{thm}\label{ContinuityC}
Let $A$ be uniformly elliptic and bounded in $B_{2r}$, with ellipticity $\lambda$, and $q<\infty$, $c_1\in L^{n,q}(B_{2r})$ with $\|c_1\|_{n,q}\leq M$. Let also $f\in L^{n,1}(B_{2r})$, $g\in L^{\frac{n}{2},1}(B_{2r})$.

For every $\mu\in(0,1)$, there exist $\xi=\xi_{n,\lambda}>0$, $\delta=\delta_{n,q,\lambda,M}>0$ and $\alpha=\alpha_{n,\lambda,\|A\|_{\infty},M,\mu}$ such that, if $b\in L^{n,1}(B_{2r})$, $c_2\in L^{n,\infty}(B_{2r})$ and $d\in L^{\frac{n}{2},1}(B_{2r})$ with $\|b\|_{n,1}<\delta$, $\|c_2\|_{n,\infty}<\xi$ and $\|d\|_{\frac{n}{2},1}<\delta$, then for any solution $u\in W^{1,2}(B_{2r})$ to $-\dive(A\nabla u+bu)+c\nabla u+du=-\dive f+g$, we have that
\begin{multline*}
|u(x)-u(y)|\leq C\left(\frac{|x-y|^{\alpha}}{r^{\alpha}}+Q_{b,d}(|x-y|^{\mu}r^{1-\mu})\right)\cdot\left(\fint_{B_{2r}}|u|+Q_{f,g}(2r)\right)+CQ_{f,g}(|x-y|^{\mu}r^{1-\mu}),
\end{multline*}
for any $x,y\in B_r$, where $Q$ is defined in \eqref{eq:QDfn} and $C$ depends on $n,q,\lambda,\|A\|_{\infty}$ and $M$.
\end{thm}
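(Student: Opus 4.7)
The plan is to mirror the proof of Theorem~\ref{ContinuityB}, replacing the two ingredients that were tailored to the small-$c$ setting by their large-$c$ counterparts: Theorem~\ref{HarnackForB} by Theorem~\ref{HarnackForC}, and Proposition~\ref{MoserB} by Proposition~\ref{MoserC}. Concretely, for $x_0\in B_r$ and $\rho\in(0,r]$, let $M(\rho)=\sup_{B_\rho(x_0)}u$ and $m(\rho)=\inf_{B_\rho(x_0)}u$, and consider
\[
v_1 = M(\rho)-u\geq 0,\qquad v_2 = u-m(\rho)\geq 0\qquad\text{in }B_\rho(x_0).
\]
Since $-\dive(A\nabla u+bu)+c\nabla u+du=-\dive f+g$, the functions $v_1,v_2$ solve the same operator with right-hand sides shifted by additive multiples of $b$ and $d$, namely
\[
\mathcal{L}v_1=-\dive(M(\rho)b-f)+(M(\rho)d-g),\qquad \mathcal{L}v_2=-\dive(f-m(\rho)b)+(g-m(\rho)d).
\]

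The crucial observation is that the operator $\mathcal{L}$ itself is unchanged, so the smallness hypotheses $\|b\|_{L^{n,1}(B_{2r})}<\delta$, $\|c_2\|_{L^{n,\infty}(B_{2r})}<\xi$, $\|d\|_{L^{n/2,1}(B_{2r})}<\delta$, together with $\|c_1\|_{L^{n,q}(B_{2r})}\le M$, are automatically satisfied on any sub-ball $B_\rho(x_0)\subseteq B_{2r}$. I would therefore apply Theorem~\ref{HarnackForC} to $v_1$ and to $v_2$ on $B_\rho(x_0)$, use the triangle inequality \eqref{eq:Seminorm} and the definition \eqref{eq:QDfn} to bound the resulting right-hand sides by $\sup_{B_r}|u|\cdot Q_{b,d}(\rho)+Q_{f,g}(\rho)$, and add the two estimates. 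Setting $\omega(\rho)=M(\rho)-m(\rho)$, this produces exactly the oscillation-decay estimate
\[
\omega(\rho/2)\leq \theta_0\,\omega(\rho)+C\sup_{B_r}|u|\cdot Q_{b,d}(\rho)+C Q_{f,g}(\rho),
\]
with $\theta_0=(C-1)/(C+1)\in(0,1)$, precisely as in the derivation of \eqref{eq:up1} and \eqref{eq:up2}.

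The remainder of the argument is purely mechanical: \cite[Lemma 8.23]{Gilbarg} applied to this recursion yields, for every $\rho\in(0,r]$,
\[
\omega(\rho)\leq C\frac{\rho^\alpha}{r^\alpha}\omega(r)+C\sup_{B_r}|u|\cdot Q_{b,d}(\rho^\mu r^{1-\mu})+C Q_{f,g}(\rho^\mu r^{1-\mu}),
\]
with $\alpha=\alpha_{n,\lambda,\|A\|_\infty,M,\mu}\in(0,1)$ (this is the step that introduces the $\mu$ dependence). Finally, I would control $\omega(r)$ and $\sup_{B_r}|u|$ by applying Proposition~\ref{MoserC} to $u$ and to $-u$ with $p=1$, which under the present smallness hypotheses gives
\[
\sup_{B_r}|u|\leq C\fint_{B_{2r}}|u|+CQ_{f,g}(2r),
\]
and then take $\rho=|x-y|$ for $x,y\in B_r$ (after centering the balls appropriately and possibly shrinking $r$ by a harmless universal factor via a covering argument).

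I do not expect any genuine obstacle: the only point requiring care is verifying that all smallness and norm-bound hypotheses of Theorem~\ref{HarnackForC} and Proposition~\ref{MoserC} are preserved when passing to sub-balls and when absorbing the constants $M(\rho), m(\rho)$ into the right-hand side. Both are straightforward because these hypotheses concern only the coefficients of the operator (which are untouched by the translations $u\mapsto v_1,v_2$) and are monotone under restriction to sub-domains.
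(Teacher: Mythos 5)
Your proposal is correct and follows exactly the route the paper intends: the paper states Theorem~\ref{ContinuityC} as an immediate consequence of Proposition~\ref{MoserC} and Theorem~\ref{HarnackForC}, by the same oscillation-decay argument used for Theorem~\ref{ContinuityB}. Your spelled-out version of that substitution is accurate, including the observation that the smallness hypotheses are stable under restriction to sub-balls.
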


\begin{remark}\label{noSmallness3}
As in Remarks~\ref{noSmallness} and \ref{noSmallness2}, the analogues of Theorems~\ref{HarnackForB} - \ref{ContinuityC} will hold under no smallness assumptions for $b,d$ and $c,d$ (when $c\in L^{n,q}$, $q<\infty$), but then the constants depend on $b,d$ or $c,d$ and not just on their norms.
\end{remark}

\section{Optimality of the assumptions}\label{secOptimality}

We now turn to showing that our assumptions are optimal in order to deduce the estimates we have shown so far, in the setting of Lorentz spaces. We first show optimality for $b$ and $d$.

\begin{remark}\label{optimalB}
Considering the operators $\mathcal{L}_1u=-\Delta u-\dive(bu)$ and $\mathcal{L}_2u=-\Delta u+du$, an assumption of the form $b\in L^{n,q}$, $d\in L^{\frac{n}{2},q}$ for some $q>1$, with $\|b\|_{n,q}$, $\|d\|_{\frac{n}{2},1}$ being as small as we want, is not enough to guarantee the pointwise bounds in the maximum principle and Moser's estimate. Indeed, as in Lemma \cite[Lemma 7.4]{KimSak}, set $u_{\delta}(x)=\left(-\ln|x|\right)^{\delta}$ and $b_{\delta}(x)=-\frac{\delta x}{|x|^2\ln|x|}$. Then, for $\delta\in(-1,1)$, $b\in L^{n,q}(B_{1/e})$ for all $q>1$, $u_{\delta}\in W^{1,2}(B_{1/e})$, and $u_{\delta}$ solves the equation
\[
-\Delta u-\dive(b_{\delta}u_{\delta})=0
\]
in $B_{1/e}$. However, $v_{\delta}\equiv 1$ on $\partial B_{1/e}$, and $v_{\delta}\to\infty$ as $|x|\to 0$ for $\delta>0$, so the assumption $b\in L^{n,1}$ is optimal for the maximum principle and the Moser estimate. Note that $u_{\delta}$ also solves the equation
\[
-\Delta u_{\delta}+d_{\delta}u_{\delta}=0,\qquad d_{\delta}(x)=\frac{\delta(\delta-1)}{|x|^2\ln^2|x|}+\frac{\delta(n-2)}{|x|^2\ln|x|},
\]
and $d_{\delta}\in L^{\frac{n}{2},q}(B_{1/e})$ for every $q>1$; hence, the assumption $d\in L^{\frac{n}{2},1}$ is again optimal.

The same functions $b_{\delta}$ and $d_{\delta}$ serve as counterexamples to show optimality for the spaces of $b,d$ in the reverse Moser estimate. In particular, considering $\delta<0$, we have that $u_{\delta}(0)=0$, while $u_{\delta}$ does not identically vanish close to $0$, therefore the reverse Moser estimate cannot hold.
\end{remark}

We now turn to optimality for smallness of $c$, when $c\in L^{n,\infty}$.

\begin{remark}
In the case of the operator $\mathcal{L}_0u=-\Delta u+c\nabla u$ with $c\in L^{n,\infty}$, smallness in norm is a necessary condition, in order to obtain all the estimates we have considered. Indeed, if $u(x)=-\ln|x|-1$, then $u\in W_0^{1,2}(B_{1/e})$, and $u$ solves the equation
\[
-\Delta u+c\nabla u=0,\qquad c=\frac{(2-n)x}{|x|^2}\in L^{n,\infty}(B_{1/e}).
\]
However, $u$ is not bounded in $B_{1/e}$, so the maximum principle, as well as Moser's and Harnack's estimates fail. On the other hand, the function $v(x)=(-\ln|x|)^{-1}\in W_0^{1,2}(B_{1/e})$ solves the equation
\[
-\Delta v+c'\nabla v=0,\qquad c'=\frac{(n-2)x}{|x|^2}-\frac{2x}{|x|^2\ln|x|}\in L^{n,\infty}(B_{1/e}),
\]
with $v(0)=0$ and $v$ not identically vanishing close to $0$, therefore smallness for $c\in L^{n,\infty}$ in the reverse Harnack estimate is necessary.
\end{remark}

Finally, we show the optimality of the assumption that either $b,d$ should be small, or $c,d$ should be small, so that in the maximum principle, as well as Moser's and Harnack's estimates, the constants depend only on the norms of the coefficients. The fact that $d$ should be small is based on the following construction.

\begin{prop}\label{dShouldBeSmall}
There exists a bounded sequence $(d_N)$ in $L^{\frac{n}{2},1}(B_1)$ and a sequence $(u_N)$ of nonnegative $W_0^{1,2}(B_1)\cap C(\overline{B})$ functions such that, for all $N\in\mathbb N$, $u_N$ is a solution to the equation $-\Delta u_N+d_Nu_N=0$ in $B_1$, and
\[
\|u_N\|_{W_0^{1,2}(B_1)}\leq C,\quad\text{while}\quad u_N(0)\xrightarrow[N\to\infty]{}\infty.
\]
\end{prop}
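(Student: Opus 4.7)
The plan is to build a radially symmetric concentrating sequence by gluing a shifted, rescaled first Dirichlet eigenfunction in a small ball to a radial harmonic profile in the surrounding annulus, and to fix the parameters so the two pieces match in a $C^1$ fashion. Let $\phi\in C^\infty(\overline{B_1})$ denote the radial first Dirichlet eigenfunction of $-\Delta$ on $B_1$, with first eigenvalue $\lambda_1>0$, normalized so $\phi\ge 0$ and $\phi(0)=1$; Hopf's lemma gives $\phi'(1)<0$. For $N\ge 2$, set $r_N=1/N$, $\alpha_N=r_N^{(2-n)/2}$ and
\[
\beta_N=\alpha_N\,\frac{(2-n)\,r_N^{2-n}}{\phi'(1)\,(r_N^{2-n}-1)},
\]
so $\beta_N>0$ and $\beta_N/\alpha_N\to(n-2)/|\phi'(1)|$ as $N\to\infty$.

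I would then define
\[
u_N(x)=\begin{cases}\alpha_N+\beta_N\,\phi(x/r_N), & |x|<r_N,\\[2pt] \alpha_N\,\dfrac{|x|^{2-n}-1}{r_N^{2-n}-1}, & r_N\le|x|<1,\end{cases}\qquad d_N(x)=-\frac{\beta_N\lambda_1\,\phi(x/r_N)}{r_N^2\,u_N(x)}\,\chi_{B_{r_N}}(x).
\]
The two pieces of $u_N$ agree at $|x|=r_N$ with common value $\alpha_N$, and the outer piece vanishes on $\partial B_1$, so $u_N\in C(\overline{B_1})\cap W^{1,2}_0(B_1)$; moreover $u_N\ge\alpha_N>0$ in $B_{r_N}$, while $u_N\ge 0$ in the annulus since $|x|^{2-n}\ge 1$ there. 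The precise formula for $\beta_N$ is designed so that the radial derivatives of the inner and outer pieces agree at $|x|=r_N$; since both are radial, this yields $u_N\in C^1(B_1)$.

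Within each region $u_N$ is smooth and solves the equation pointwise: on $B_{r_N}$ the scaling of $\phi$ gives $-\Delta u_N=(\beta_N\lambda_1/r_N^2)\phi(x/r_N)=-d_N u_N$, while on the annulus $u_N$ is harmonic and $d_N\equiv 0$. Because $u_N\in C^1(B_1)$, the distributional Laplacian carries no surface contribution along $\partial B_{r_N}$, and thus $-\Delta u_N+d_N u_N=0$ on $B_1$ in the weak sense. Direct radial calculations yield
\[
\|\nabla u_N\|_{L^2(B_{r_N})}^2=\beta_N^2\,r_N^{n-2}\,\|\nabla\phi\|_{L^2(B_1)}^2,\qquad \|\nabla u_N\|_{L^2(B_1\setminus B_{r_N})}^2=\frac{C_n\,\alpha_N^2(n-2)}{r_N^{2-n}-1},
\]
both of order $O(1)$, since $\alpha_N^2\,r_N^{n-2}=1$ and $\beta_N/\alpha_N$ is bounded; a similar (easier) computation shows $\|u_N\|_{L^2(B_1)}\to 0$. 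On the other hand $u_N(0)=\alpha_N(1+\beta_N/\alpha_N)\to\infty$, which is the claimed blow-up.

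For $d_N$, I use $\phi\le\phi(0)=1$ and $u_N\ge\alpha_N$ on $B_{r_N}$ to bound $|d_N|\le\lambda_1(\beta_N/\alpha_N)\,r_N^{-2}\chi_{B_{r_N}}$. Since a direct computation gives $\|\chi_{B_{r_N}}\|_{L^{n/2,1}(B_1)}=C_n\,r_N^2$, this yields
\[
\|d_N\|_{L^{n/2,1}(B_1)}\le C_n\,\lambda_1\,\beta_N/\alpha_N,
\]
which is bounded in $N$. The main technical step is the derivative-matching on $\partial B_{r_N}$ that fixes $\beta_N$: without it, the distributional equation would pick up a spurious surface term, and the concentration $u_N(0)\to\infty$ could not coexist with the scale-invariant bound $\|d_N\|_{L^{n/2,1}}=O(1)$.
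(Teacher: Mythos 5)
Your construction is correct and mirrors the paper's proof in its essential structure: both glue a radial ``bump'' supported on a shrinking ball $B_{r_N}$ (on which $-\Delta u_N=-d_Nu_N$ for a potential of size $r_N^{-2}$) to a harmonic radial shell $\alpha_N\frac{|x|^{2-n}-1}{r_N^{2-n}-1}$, with a $C^1$ match at $\partial B_{r_N}$ so the weak equation picks up no surface term, and this scaling makes $\|d_N\|_{L^{n/2,1}}$ and $\|\nabla u_N\|_{L^2}$ uniformly bounded while $u_N(0)\sim r_N^{(2-n)/2}\to\infty$. The only difference is cosmetic: the paper takes the explicit quadratic profile $\frac{n}{2}+(1-\frac{n}{2})r^2$ as the inner bump and organizes the computation by truncating and rescaling a single fixed function on $\mathbb{R}^n$, whereas you use the first Dirichlet eigenfunction (invoking smoothness and Hopf's lemma) and compute directly on $B_1$.
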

\begin{proof}
We define
\[
v(r)=\left\{\begin{array}{l l}
\frac{n}{2}+\left(1-\frac{n}{2}\right)r^2, & 0<r\leq 1 \\
r^{2-n}, & r>1.\end{array}\right.
\]
Set $u(x)=v(|x|)$, then it is straightforward to check that $u$ is radially decreasing, $u\geq 1$ in $B_1$, $u\leq\frac{n}{2}$ in $\bR^n$, and $u\in Y^{1,2}(\bR^n)\cap C^1(\bR^n)$. Then, the function $d=n(2-n)u^{-1}\chi_{B_1}$ is bounded and supported in $B_1$, and $u$ is a solution to the equation $-\Delta u+du=0$ in $\bR^n$.	
	
We now let $N\in\mathbb N$ with $N\geq 2$, and set $B_N$ to be the ball of radius $N$, centered at $0$. We will modify $u$ to be a $W_0^{1,2}(B_N)$ solution to a slightly different equation: for this, set $w_N=u-v(N)$, and also
\[
d_N=\frac{du}{u-v(N)}.
\]
Since $d$ is supported in $B_1$, $d_N$ is well defined. Note also that $w_N\in W_0^{1,2}(B_N)$, and $w_N$ is a solution to the equation $-\Delta w_N+d_Nw_N=0$ in $B_N$. Moreover, since $d$ is supported in $B_1$, $u\geq 1$ in $B_1$ and $v$ is decreasing, we have that
\[
\|d_N\|_{L^{\frac{n}{2},1}(B_N)}\leq C_n\|d_N\|_{L^{\infty}(B_1)}\leq C_n\frac{\|d\|_{L^{\infty}(B_1)}\|u\|_{L^{\infty}(B_1)}}{1-v(N)}\leq C_n.
\]
Let now $\tilde{d}_N(x)=N^2d_N(Nx)$ and $\tilde{w}_N(x)=w_N(Nx)$, for $x\in B_1$. Then $\tilde{w}_N\in W_0^{1,2}(B_1)$, $(\tilde{d}_N)$ is bounded in $L^{\frac{n}{2},1}(B_1)$, and $\tilde{w}_N$ is a solution to the equation $-\Delta\tilde{w}_N+\tilde{d}_N\tilde{w}_N=0$ in $B_1$. Moreover, $\tilde{w}_N(0)\geq C_n$, while
\[
\int_{B_1}|\nabla\tilde{w}_N|^2=N^{2-n}\int_{B_N}|\nabla w_N|^2= N^{2-n}\int_{B_N}|\nabla u|^2\xrightarrow[N\to\infty]{}0,
\]
since $\nabla u\in L^2(\bR^n)$. Hence, considering the function $\frac{\tilde{w}_N}{\|\nabla\tilde{w}_N\|_{L^2(B_1)}}$ completes the proof.
\end{proof}

\begin{remark}\label{bcShouldBeSmall}
If $d_N,u_N$ are as in Proposition~\ref{dShouldBeSmall}, then using the functions $e_N$ from Lemma~\ref{Reduction} that solve the equation $\dive e_N=d_N$ in $B_1$, we have that
\[
-\dive(\nabla u_N-e_Nu)-e_N\nabla u_N=0.
\]
So, for the operator $\mathcal{L}u=-\dive(A\nabla u+bu)+c\nabla u$, if both $b,c$ are allowed to be large, then the conclusion of Proposition~\ref{dShouldBeSmall} shows that the constants in the maximum principle, as well as Moser's and Harnack's estimates, cannot depend only on the norms of the coefficients.
\end{remark}

\bibliographystyle{amsalpha}
\bibliography{Bibliography}

\end{document}